\definecolor{lightblue}{rgb}{0.22,0.45,0.70}
\newtheorem{remark}{Remark}[section]
\newtheorem{lemma}{Lemma}[section]
\newtheorem{theorem}{Theorem}[section]
\newtheorem{proposition}{Proposition}[section]
\newtheorem{assumption}{Assumption}
\renewcommand{\O}{\Omega}
\newcommand{\calA}{ \mathcal{A}}
\newcommand{\calB}{ \mathcal{B}}
\renewcommand{\P}{{\mathcal P}}  
\def\Eta{\boldsymbol{\eta}}
\def\nbs \boldsymbol{n}
\def\0{\boldsymbol{0}}
\def\ZtK{\widetilde{\mathcal{Z}_h^{E}}}
\def\ZK{\mathcal{Z}_h^{E}}
\def\dof{\text{dof}}
\def\TAstr{\widetilde{\alpha_{\ast}}}
\def\TAstR{\widetilde{\alpha^{\ast}}}
\def\TGstR{\widetilde{\gamma^{\ast}}}
\def\TGstr{\widetilde{\gamma_{\ast}}}
\def\Hdost{{H^2_{\ast}(\O)}}
\def\PiK{\Pi_{E}^{k,\Delta}}
\def\aL{\widetilde{\alpha_{\ast}}}
\def\aU{\widetilde{\alpha^{\ast}}}
\def\gmL{\widetilde{\gamma_{\ast}}}
\def\gmU{\widetilde{\gamma^{\ast}}}
\newenvironment{proof}{\noindent{\it Proof.}}{\hfill$\square$}
\begin{document}

\title{Convergence Analysis of Virtual Element Method for Nonlinear Nonlocal Dynamic Plate Equation}

\author{D. Adak\thanks{GIMNAP, Departamento de Matem\'atica, Universidad
del B\'io-B\'io, Concepci\'on, Chile.
E-mail: {\tt dadak@ubiobio.cl}},\quad 
D. Mora\thanks{GIMNAP, Departamento de Matem\'atica, Universidad
del B\'io-B\'io, Concepci\'on, Chile and
CI$^2$MA, Universidad de Concepci\'on, Concepci\'on, Chile.
E-mail: {\tt dmora@ubiobio.cl}},
\quad
S. Natarajan\thanks{Integrated Modelling and Simulation Lab,
Department of Mechanical Engineering, Indian Institute
of Technology Madras, Chennai-600036, India.
E-mail: {\tt snatarajan@iitm.ac.in}}.
}

\date{}
\maketitle

\begin{abstract}
In this article, we have considered a nonlinear nonlocal
time dependent fourth order equation demonstrating the deformation
of a thin and narrow rectangular plate. We propose $C^1$ conforming virtual element
method (VEM) of arbitrary order, $k\ge2$, to approximate the model problem numerically.
We employ VEM to discretize the space variable and fully implicit
scheme for temporal variable. Well-posedness of the fully discrete
scheme is proved  under certain conditions on the physical parameters,
and we derive optimal order of convergence
in both space and time variable. Finally, numerical
experiments are presented to illustrate the behaviour
of the proposed numerical scheme. 
\end{abstract}

\noindent
\textit{Keywords:}
Virtual element method;
fourth order plate equation;
time dependent problem;
error estimates.
\medskip

\noindent
\textit{AMS Subject Classification:} 65N30, 65N12, 65N15, 35Q74, 74K20.
\medskip

\section{Introduction}

We consider the nonlinear nonlocal time dependent equation which demonstrates
the deformation of a thin and narrow rectangular plate having two free
long edges and two hinged short edges. In neutral, the plate lies
horizontally flat representing a real suspension bridge with
planar computational domain. The plate is dealt with internal
(due to its own weight) force and external force (due to the load of vehicles and people)
that act orthogonally downward which are neutralised by the compressive
forces along the edges, the so-called buckling loads. Further,
we consider here the simplest scenario neglecting the complex interactions
between all the components of a real bridge.
The planar domain $\Omega$ is represented by $\Omega:=(0,L)\times(-\ell,\ell)$
with $0< \ell \ll L$.  Following \cite{BGMsD-SIAP2019,FGMsD-JDE2016,GazzoBook2015},
the nonlinear nonlocal dynamic plate equation which models the deformation
of bridge is given by
\begin{align}
& D_{tt}u +\delta D_t u+\Delta^2 u+\left[P-S \int_{\Omega} (D_xu)^2 {\rm d} \O \right]D_{xx}u=g
\quad \text{in} \ \Omega_T:=\Omega \times [0,T],\label{modl_prob:1} \\
& u=D_{xx}u=0 \quad \qquad \qquad \qquad \qquad \qquad\qquad \qquad \qquad \qquad \qquad  \text{on}\ \Gamma_{s} \times [0,T], \label{modl_prob:2}\\
& D_{yy}u+\sigma D_{xx}u=D_{yyy}u+(2-\sigma)D_{xxy}u=0 \qquad  \qquad \qquad \quad \; \text{on}\ \Gamma_{f} \times [0,T],  \label{modl_prob:3}\\
& u(x,y,0)=u_0(x,y), \quad
D_tu(x,y,0)=\omega_0(x,y) \qquad \qquad \qquad \qquad   \text{in}\ \Omega, \label{modl_prob:5}
\end{align}
where $D_t u$ and $D_x u$ denote the derivative of a function $u$
with respect to time variable $t$ and space variable $x$ and $\Gamma_{s}:=\{0,L\} \times [-\ell,\ell]$, and $\Gamma_{f}:=[0,L] \times \{-\ell,\ell\}$.
The constants appearing in the model problem~\eqref{modl_prob:1}--\eqref{modl_prob:5}
are expressed as below:
\begin{itemize}
\item $L=$ length of the plate;
\item $2\ell=$ width of the plate;
\item $\delta=$ damping coefficient;
\item $\sigma=$ Poisson ratio of the material of the plate; 
\item $P=$ pre-stressing constant;
\item $S>0$ is a coefficient that depends on the elasticity of the material composing the plate;
\item $g$ represents the vertical load over the plate and may
depend on space and time.
\end{itemize}

The deformation of the plate
is described by the function $u(x,y,t)$.
The model problem consists of nonlocal nonlinearity involving the buckling constant $P$
for which we have that $P>0$ if the plate is compressed and $P<0$
if the plate is elongated in the $x$ direction.
The term $S\int_{\O}(D_xu)^2{\rm d} \O$ measures the geometric nonlinearity
of the plate due to its stretching.
Moreover, the term $[P-S \int_{\Omega} (D_xu)^2{\rm d} \O]$ carries a nonlocal effect
into the model.
We refer to \cite{BGMsD-SIAP2019,FGMsD-JDE2016} for further details.
In \cite{BGMsD-SIAP2019}, the well-posedness  of the model
problem~\eqref{modl_prob:1}--\eqref{modl_prob:5} was analyzed. 
Recently, in \cite{CCCHSV-jFI2020} it has been studied the uniform decay rates in the presence
of nonlocal nonlinearity due to applying external forces. Moreover, a finite difference scheme
for a linearised scheme is proposed to validate the theoretical results.

In this article, we exploit a conforming $C^1$ virtual element to approximate the
solution of the nonlinear plate equation. The model problem involves fourth order
derivative in space variable which a conforming discretization requires globally $C^1$ functions.
It is well-known that the construction of $H^2$-conforming finite elements is difficult in general,
since they usually involve a large number of degrees of freedom \cite{ciarlet}.
Alternative solution is the application of mixed formulations or the use of
non-conforming or discontinuous Galerkin methods. In this article, we will develop
a $C^1$ conforming approximation on polygonal elements based on the Virtual Element Method (VEM).

The VEM introduced in \cite{BBCMMR2013} as a generalization of FEM which is characterized
by the capability of dealing with very general polygonal/polyhedral meshes,
including hanging nodes and nonconvex elements
(see \cite{AMSLP2018,ABMV2014,BLM2015,BMV2019,CG2017,CMS2016,FS-M2AN18,GV:IMA2017,LMRV-JSC2021,MPP2018,MR2018,PPR15} and refereneces therein).
The VEM also permits to easily implement highly regular conforming discrete spaces \cite{BM13,ChM-camwa}
which make the method very feasible to solve various fourth-order problems \cite{ABSV2016,MS-CAMWA2021,BMR2016,MRV,MV19}.
Regarding VEM for time dependent problems, we mention the following works
\cite{AN-ACOM2020,AMNS-Submitted2020,AN-NMPDE2019,ABMS2019,AMMMV-IJNME2021,VB15,V-hyperb16,ZYF-JCAM2019}.

The aim of the present paper is to introduce and analyze a virtual element method
to approximate the transverse displacement of the time dependent nonlinear
plate model problem~\eqref{modl_prob:1}--\eqref{modl_prob:5}.
The well-posedness of the continuous formulation has been studied in \cite{BGMsD-SIAP2019}.
Thus, we introduce conforming $C^1$-discretization of the problem based on the VEM for the space variable.
As demanded by our analysis, we have extended the $C^1$-VEM space introduced in \cite{BM13},
in order to compute some $L^2$ projection operators to discretize the
time dependent terms and the nonlocal term.
This newly introduced technique is capable
of handing very general polygonal meshes avoiding complex integration over elements.
Moreover, we have written a fully-discrete formulation by using a fully-implicit scheme.
We prove that the numerical solution converges to analytical
solution by using a fixed-point strategy and under standard assumptions on the computational domain,
we establish error estimates in $H^2$-norm.
Further, the appearance of nonlocal term diminish the sparse
structure of the Jacobian of the fully-discrete scheme. To avoid this difficulty, we have
introduced a new variable and retrieved the sparse structure of the Jacobian.
Further, we have proposed a linearised scheme to reduce the computational
cost without compromising the rate of convergence.  
In summary, the advantages of the proposed method are the possibility
to use general polygonal meshes with a rather straightforward construction
due to the flexibility of the virtual approach.
Moreover, the method provides an attractive and competitive
alternative in terms of its computational cost.
Finally, we mention that the method can be used to
solve the linear Kirchhoff-Love dynamic plate problem \cite{BDJ-NMPDE2005}.

The outline of this article is presented as follows. In Section~\ref{preliminaries},
the continuous weak formulation of the physical model problem  is presented.
Basic setting of functional analysis and the well-posedness of the weak formulation are
highlighted in the same section. Next, we discuss the $C^1$-VEM space and the
computation of the projection operators in Section~\ref{discrete:VEM}.
The well-posedness of the semi-discrete and fully-discrete schemes are proved in the same section.
In Section~\ref{convergence:VEM}, we discuss the convergence analysis of semi-discrete and fully-discrete schemes.
The theoretical convergence rate are justified by investigating numerical tests in Section~\ref{Numerics:VEM}. 

The major contributions of this article are enlisted as follows.
\begin{itemize}
\item The model problem deals with time dependent biharmonic term along with nonlocal
nonlinearity which is very expensive to approximate using standard finite element method.
In this article, we have  proposed an efficient, attractive and competitive virtual element
scheme for the model problem. For instance, in the lowest order case $(k = 2)$,
the total cost of the scheme is almost $3N_{\Xi}$, where $N_{\Xi}$ denotes the number
of vertices in the polygonal mesh. 
Moreover, the nonlinear scheme is solved by introducing new variable which maintains the sparsity of the jacobian.

\item The well-posedness of fully discrete scheme is proved based on some practicable
assumptions on the nonlocal coefficients $P$ and $S$ and extended the analysis for semi-discrete and fully-discrete case.
\item Using Schauder's fixed point theorem, we have derived that the solution
of fully discrete scheme belongs to a ball $\mathcal{B}_d$ with radius $d$ which
in independent of $1/\Delta t$ and depends on initial data. Therefore, we deduce
that the fully discrete scheme is stable when $\Delta t $ goes to $0$ and mesh size $h$ goes to $0$.
\item Unlike bilinear term, we have discretized the nonlocal term avoiding non-polynomial
part or stabilization part and theoretically prove that the numerical solution approximates exact solution optimally.

\end{itemize}

\section{Preliminaries and weak formulation of the problem}
\label{preliminaries}

Throughout the paper, we have dealt with the following notations. $\Omega \subset \mathbb{R}^2$
is a bounded polygonal domain with Lipschitz boundary $\Gamma:=\partial \Omega$,
corresponding to the mean surface of a plate in its reference configuration.
We assume that $\Gamma$ admits a disjoint partition $\Gamma = \Gamma_{s}\cup\Gamma_f$,
the plate being simply-supported on $\Gamma_{s}$ and free on $\Gamma_f$.
For the sake of simplicity, we also assume that both $\Gamma_{s}$ and $\Gamma_f$ have positive measure.
For the time variable, we consider $t \in \mathcal{I}:=(0,T]$, where $T$ is fixed final time.
$L^2(\Omega)$ denotes the Sobolev space of square integrable function with the norm
$\|\phi\|_{0,\O}^2:=\int_{\Omega} \phi^2 {\rm d} \O$ and the norm is induced by the inner-product
$(\phi,\psi)_{0,\O}:=\int_{\O} \phi \psi  {\rm d} \O$. The space $H^s(\O)$ consists
of functions which have square integrable derivative $D^{\alpha} \phi$ upto order $s$ and the norm associated with the space is defined as
$\|\phi\|^2_{s,\O}:=\Big( \sum_{0\leq \alpha \leq s}\|D^{\alpha}\phi\|_{0,\O}^2\Big)$,
where $\alpha$ is multi-index. Further, the space $L^2(0,T;H^s(\Omega))$ consists of function $\phi$
such that $\phi(\cdot,t)\in H^s(\Omega)$ for all most all $t \in (0,T]$ and the associated norm is defined as
$\|\phi\|^2_{L^2(0,T;H^s(\O))}:=\Big( \int_0^T \|\phi(t)\|_{s,\O}^2 {\rm d}t\Big)$.
In parallel way, we define $L^{\infty}(0,T;H^s(\O)):=\{\phi(\cdot,t) \in H^s(\O) \  \text{for all most all}\  t\in (0,T]  \}$.
Further, we define the function space 
\begin{equation*}
H^2_{\ast}(\Omega):=\{\phi \in H^2(\Omega):\phi=0 \quad \text{on} \quad  \Gamma_s \},
\end{equation*} 
with its dual space $H^2_{\ast}(\Omega)'$. Moreover, we use the angle
bracket $\langle \cdot,\cdot \rangle$ to denote the duality of $H^2_{\ast}(\Omega)'\times H^2_{\ast}(\Omega)$.
On $H^2_{\ast}(\Omega)$, we define the inner product  
\begin{equation}
(u,v)_{H^2_{\ast}(\Omega)}:= \int_{\Omega} \Big( \Delta u \Delta v-(1-\sigma)(u_{xx}v_{yy}+u_{yy}v_{xx}-2u_{xy}v_{xy})\Big) {\rm d} \O, \quad \forall u,v \in H^2_{\ast}(\Omega),
\label{inner:H2st}
\end{equation}
where $\sigma \in (0,1)$. We have that $H^2_{\ast}(\Omega)$ is a  Hilbert space (see \cite{CCCHSV-jFI2020}).

Next, we derive the weak formulation for \eqref{modl_prob:1}--\eqref{modl_prob:5}.
\subsection{Weak formulation} Let $g \in C^0([0,T],L^2(\Omega))$ for some $T>0$
and define $\mathcal{A}(u,v):=(u,v)_{H^2_{\ast}(\Omega)}$ for all $u,v \in H^2_{\ast}(\Omega)$ (cf. \eqref{inner:H2st}).
A weak solution of  \eqref{modl_prob:1}--\eqref{modl_prob:5} is a function
$u \in C^0([0,T],H^2_{\ast}(\Omega))\cap C^1 ([0,T],L^2(\Omega))\cap C^2([0,T],(H^2_{\ast}(\Omega))')$ such that
\begin{equation}
\begin{split}
& \langle D_{tt}u,v \rangle+\delta (D_t u,v)_{0,\Omega}+\mathcal{A}(u,v)+[S\|D_xu\|^2_{0,\O}
-P]a^x(u, v)=(g,v)_{0,\Omega} \\
& u(x,y,0)=u_0(x,y), \quad D_tu(x,y,0)=\omega_0(x,y),
\end{split}
\label{weak_sol}
\end{equation}
for all $t \in [0,T]$ and all $v \in H^2_{\ast}(\Omega)$, and $a^x(u,v):=(D_x u,D_x v)_{0,\Omega}$.

In order to state the well-posedness of \eqref{weak_sol}, we consider the following
eigenvalue problem: Find $(\lambda,w)\in\mathbb{R}\times H^2_{\ast}(\Omega)$, $w\ne0$, such that
\begin{equation}
\label{vibra}
\left\{\begin{array}{ll}
\Delta^2w=\lambda w& \text{in}\ \Omega,
\\
w=D_{xx}w=0 & \text{on}\ \Gamma_{s},\\
D_{yy}w+\sigma D_{xx}w=D_{yyy}w+(2-\sigma)D_{xxy}w=0 & \text{on}\ \Gamma_{f}.
\end{array}\right.
\end{equation}
We will denote by $\lambda_1>0$ the lowest eigenvalue of problem~\eqref{vibra}.

The following inequalities are going to be useful in the next sections (cf. \cite{BGMsD-SIAP2019}).
\begin{equation}\label{euthdgfk}
\Vert v\Vert_{0,\Omega}\le \Vert D_x v\Vert_{0,\Omega},\qquad
\lambda_1\Vert v\Vert_{0,\Omega}^2\le \Vert v\Vert_{2,\Omega}^2,\qquad
\lambda_1\Vert D_x v\Vert_{0,\Omega}^2\le \Vert v\Vert_{2,\Omega}^2
\qquad \forall v\in H^2_{\ast}(\O).
\end{equation}

Following \cite[Theorem~5]{BGMsD-SIAP2019}, we state the well-posedness of \eqref{weak_sol}.
\begin{theorem}\label{hgfvcxds}
Let us assume that $\delta>0$, $S>0$, $P \in [0,\lambda_1)$, $g \in C^0([0,T],L^2(\O))$,
$u_0 \in H^2_{\ast}(\Omega)$, and $\omega_0 \in L^2(\O)$, then there exists a unique weak
solution $u$ of \eqref{weak_sol}. Further, if $g\in C^1([0,T],L^2(\O))$, $u_0 \in H^4(\O) \cap H^2_{\ast}(\O)$,
and $\omega_0 \in H^2_{\ast}(\O)$, then 
\begin{equation*}
u \in C^0([0,T],H^4(\O)\cap H^2_{\ast}(\O))\cap C^1([0,T],H^2_{\ast}(\O))\cap C^2([0,T],L^2(\O))
\end{equation*}
and $u$ is a strong solution of \eqref{modl_prob:1}--\eqref{modl_prob:5}.
\end{theorem}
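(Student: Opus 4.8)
The plan is to establish existence by a Faedo--Galerkin method, derive uniform energy estimates that exploit the damping $\delta>0$ and the coercivity hypothesis $P\in[0,\lambda_1)$, pass to the limit by a compactness argument, and obtain uniqueness through an energy estimate for the difference of two solutions; the strong-solution part then follows by bootstrapping once the data are smoother. First I would take the eigenfunctions $\{w_j\}_{j\ge1}$ of the vibration problem~\eqref{vibra}, normalized to be orthonormal in $L^2(\O)$ and orthogonal in $H^2_{\ast}(\O)$ with respect to $\mathcal{A}$, set $V_m=\mathrm{span}\{w_1,\dots,w_m\}$, and seek $u_m(t)=\sum_{j=1}^{m}c_j(t)\,w_j$ solving~\eqref{weak_sol} for all $v\in V_m$ with suitably projected initial data. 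The coefficients satisfy a second-order ODE system with polynomial, hence locally Lipschitz, nonlinearity, so Cauchy--Lipschitz gives a local solution.

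The a priori bound comes from testing the discrete equation with $v=D_t u_m$, which yields the energy identity
\begin{equation*}
\frac{d}{dt}\Big(\tfrac12\|D_t u_m\|_{0,\O}^2+\tfrac12\mathcal{A}(u_m,u_m)+\tfrac{S}{4}\|D_x u_m\|_{0,\O}^4-\tfrac{P}{2}\|D_x u_m\|_{0,\O}^2\Big)=-\delta\|D_t u_m\|_{0,\O}^2+(g,D_t u_m)_{0,\O}.
\end{equation*}
By~\eqref{euthdgfk} and $P<\lambda_1$ the potential part dominates a positive multiple of $\mathcal{A}(u_m,u_m)$ (the quartic term being nonnegative), so the energy is coercive; absorbing the forcing with Young's inequality and applying Gr\"onwall produces bounds for $u_m$ in $L^\infty(0,T;H^2_{\ast}(\O))$ and for $D_t u_m$ in $L^\infty(0,T;L^2(\O))$, uniform in $m$. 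These also preclude finite-time blow-up, so each $u_m$ lives on all of $[0,T]$.

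The main obstacle is the passage to the limit in the nonlocal nonlinear term $S\|D_x u_m\|_{0,\O}^2\,a^x(u_m,v)$, which is not weakly continuous. After extracting a weakly-$\ast$ convergent subsequence, I would invoke an Aubin--Lions compactness argument: the uniform bounds on $u_m$ and $D_t u_m$ give strong convergence of $u_m$ in, say, $C([0,T];H^{2-\varepsilon}(\O))$, hence strong convergence of $D_x u_m$ in $L^2(\O)$, so the scalar factor $\|D_x u_m\|_{0,\O}^2$ converges and the nonlinear term can be identified in the limit; the linear terms pass trivially and the initial data are recovered from the convergence, placing the limit $u$ in the class $C^0([0,T],H^2_{\ast}(\O))\cap C^1([0,T],L^2(\O))\cap C^2([0,T],H^2_{\ast}(\O)')$. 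For uniqueness I would subtract the equations for two solutions $u_1,u_2$, set $w=u_1-u_2$, test with $D_t w$, write the nonlinear difference as $S\|D_x u_1\|_{0,\O}^2\,a^x(w,\cdot)+S\big(\|D_x u_1\|_{0,\O}^2-\|D_x u_2\|_{0,\O}^2\big)a^x(u_2,\cdot)-P\,a^x(w,\cdot)$, bound each piece using~\eqref{euthdgfk} and the a priori bounds, and conclude $w\equiv0$ by Gr\"onwall.

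For the strong solution, under $g\in C^1([0,T],L^2(\O))$, $u_0\in H^4(\O)\cap H^2_{\ast}(\O)$ and $\omega_0\in H^2_{\ast}(\O)$, I would differentiate the problem in time and repeat the energy estimate on $D_t u$ (testing with $D_{tt}u$) to gain one order of regularity, then use elliptic regularity for the biharmonic operator of~\eqref{vibra} to upgrade $u$ to $C^0([0,T],H^4(\O)\cap H^2_{\ast}(\O))$ with the stated time regularity, and finally verify from the equation itself that $u$ satisfies~\eqref{modl_prob:1}--\eqref{modl_prob:5} pointwise.
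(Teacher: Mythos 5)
First, a point of comparison: the paper does not prove this theorem at all --- it is quoted verbatim from \cite{BGMsD-SIAP2019} (Theorem~5 there), so there is no in-paper argument to measure yours against. That said, the Faedo--Galerkin strategy you outline is the standard route and is essentially the one followed in the cited source: eigenfunctions of \eqref{vibra} as the Galerkin basis, the energy identity obtained by testing with $D_t u_m$ (your identity is algebraically correct, since $S\|D_xu_m\|_{0,\O}^2\,a^x(u_m,D_tu_m)=\tfrac{S}{4}\tfrac{d}{dt}\|D_xu_m\|_{0,\O}^4$), coercivity of the potential energy from $P<\lambda_1$ via \eqref{euthdgfk}, Aubin--Lions to handle the scalar nonlocal factor, and a difference estimate for uniqueness. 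The identification of the limit in the nonlinear term is indeed the only delicate point in existence, and your observation that only the scalar $\|D_xu_m\|_{0,\O}^2$ needs strong convergence is the right one.

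Two steps are glossed over and would need repair in a complete write-up. First, in the uniqueness argument you test the equation for $w=u_1-u_2$ with $D_tw$; at the regularity of a weak solution, $D_tw(t)$ lies only in $L^2(\O)$, so it is not an admissible test function in $H^2_{\ast}(\O)$ and the pairings $\mathcal{A}(w,D_tw)$ and $a^x(w,D_tw)$ are not defined pointwise in time. The standard fix is the Ladyzhenskaya/Lions device of testing with $v(t)=\int_t^s w(\tau)\,d\tau$ for $t\le s$ (and $0$ afterwards), or a regularization in time, after which your decomposition of the nonlinear difference and the Gr\"onwall argument go through. Second, the uniform bounds plus weak-$\ast$ limits only place $u$ in $L^\infty(0,T;H^2_{\ast}(\O))\cap W^{1,\infty}(0,T;L^2(\O))$, i.e. weak continuity in time; upgrading to the asserted strong continuity $C^0([0,T],H^2_{\ast}(\O))\cap C^1([0,T],L^2(\O))$ requires an additional argument (continuity of the energy along solutions combined with weak continuity, or the density/translation argument standard for linear hyperbolic problems). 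With those two points addressed, the proposal is a sound reconstruction of the proof the paper delegates to \cite{BGMsD-SIAP2019}.
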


\section{Virtual element methods}
\label{discrete:VEM}

In this section, we describe the modified $C^1$ virtual
element space to approximate the deformation of a plate
modelled by \eqref{modl_prob:1}--\eqref{modl_prob:5}. The construction of
modified VEM space consists of several steps.
We start with the mesh construction and the assumptions
considered to introduce the discrete virtual element spaces.

Let $\{ \Omega_h \}_{h>0}$ represents sequence of decomposition
of $\Omega$ into general possibly concave polygonal element $E$
with $diam (E):=h_E$, and $diam(E):=\underset{x,y \in E}{\max}~ d_{\mathbb{R}^2}(x,y)$.
We define the mesh size $h:=\underset{E \in \O_h}{\max}~h_E$.
For all polygonal elements $E \in \O_h$, $\P_k(E)$ denotes polynomial space of degree $k$ on $E$ and 
$\mathcal{M}_{\alpha}^{\bar{d}}(E):=\Big \{ \Big(\frac{\bold{x}-\bold{x}_E}{h_E} \Big )^{s},|s|\leq \alpha  \Big \}$,
$\bar{d}=1,2$, denotes the scaled monomials, where $\bold{x}_E$ signifies the centroid of the polygon $E$.

For a particular element $E \in \O_h$, we denote by $e$ and $N_E$
the straight edges of the mesh $\O_h$ and the number of vertices of $E$, respectively,
and $\bold{n}^e_E$ denotes the unit outward normal vector to $e$ acting outward to $E$.

To analyze the discrete scheme mathematically, we will assume that
$\O_h$ satisfies the following regularity condition:
\begin{assumption}(Mesh-Regularity)
\begin{itemize}
\item Every element $E\in \O_h$ is star shaped with respect to a ball of
radius greater $\gamma h_E$, where $\gamma$ is a positive constant.
\item For every element $E$, and for all $e \subset \partial E$, $|e| \geq \gamma h_E$.
\end{itemize}
\label{mesh:regularity}
\end{assumption}

In order to introduce the discretization, for every integer $k\ge2$ and for every polygon $E$,
we define the following finite dimensional space:

\begin{align*}
\ZtK
:=\left\{\phi_h\in H^2(E) : \Delta^2\phi_h|_{E}\in\P_{k}(E), \phi_h|_{\partial E}\in C^0(\partial E),
\phi_h|_e\in\P_{r}(e)\,\,\forall e\in\partial E,\right.\\
\left.\nabla \phi_h|_{\partial E}\in C^0(\partial E)^2,
\partial_{\boldsymbol{n}^e_{ E}} \phi_h|_e\in\P_{k-1}(e)\,\,\forall e\in\partial E\right\},
\end{align*}
where $r:=\max\{3,k\}$.
Next, we identify a set of linear operators by $\boldsymbol{\chi}$ from $\ZtK$ to $\mathbb{R}$.

\paragraph{Linear operators $\boldsymbol{\chi}$.}
\begin{itemize}
\item $D1:$ The values of $\phi_h(\Xi)$ for all vertex $\Xi$;
\item $D2:$ The values of $h_\Xi\nabla \phi_h(\Xi)$ for all vertex $\Xi$;
\item $D3:$ For $r\ge4$, 
the moments 
\begin{equation*}
\frac{1}{h_e} \int_e q(\xi) \phi_h(\xi) {\rm d} \xi  \quad\forall q \in \mathcal{M}_{r-4}^1(e)
\quad \forall \text{edge}\ e;
\end{equation*}
\item $D4:$ For $k\ge3$,
the moments
\begin{equation*}
\int_e q(\xi) \partial_{\bold{n}} \phi_h (\xi)~ {\rm d} \xi
\quad \forall q \in \mathcal{M}^1_{k-3}(e) \quad\forall \text{edge}\ e;
     \end{equation*}
     \item $D5:$ For $k\ge4$, the moments 
     \begin{equation*}
         \frac{1}{h_E^2} \int_{E} q(x) \phi_h(x) {\rm d}x \quad \forall q \in \mathcal{M}_{k-4}^2(E),
     \end{equation*}
\end{itemize} 
where $h_\Xi$ corresponds to the average of the diameters
corresponding to the elements with $\Xi$ as a vertex.

To construct the modified VEM space, we introduce projection operators $\PiK$, $ \Pi^k_E$
that will be used to discretize the forms in the variational problem~\eqref{weak_sol}
and these operators are computable from the functional $\boldsymbol{\chi}$.

Let us define the projection operator $\PiK:\ZtK \rightarrow \P_{k}(E)$ 
as,
\begin{align*}
\calA(\PiK v,q)=\calA(v,q) \quad \forall  q \in \P_{k}(E) \\
\widehat{\PiK v}=\widehat{v} \ 
 \text{and}
  \  \widehat{\nabla \PiK v}=\widehat{\nabla  v},
\end{align*}
where $\widehat{v}:=\frac{1}{N_E} \sum_{i=1}^{N_E} v(\Xi_i)$ and $\Xi_i$ are the vertices of $E$. 

With the help of $\PiK$, our local virtual element space is defined
as follows,
\begin{equation}
    \ZK=\left\{ \phi_h \in \ZtK: \int_E m_{\alpha} \PiK \phi_h= \int_E m_{\alpha}
    \phi_h,\quad  m_{\alpha}\in\mathcal{M}_\alpha^{2,\star}(E),\,\,\alpha=k,k-1,k-2,k-3  \right\},
    \label{intended_space:VEM}
\end{equation}
where $\mathcal{M}_\alpha^{2,\star}(E),$ $\alpha=k,k-1,k-2,k-3$ are scaled monomials of degree $k,k-1,k-2,k-3$, respectively,
with the convention that $\mathcal{M}_{-1}^{2,\star}(E)=\emptyset$.

The global virtual element space is defined as  
\begin{equation*}
    \mathcal{Z}_h:= \left\{ \phi_h \in \Hdost: \phi_h|_{E} \in \ZK  \right\}.
\end{equation*}

From \eqref{intended_space:VEM}, it can be foreseen that the dimension
of the space $\ZK$ is same as original $C^1$-VEM space defined
in \cite{BM13}. The primary advantage of this space is that
we can compute the $L^2$-projection operator $\Pi^k_E$ onto $\P_{k}(E)$.

Next, we will prove that $\boldsymbol{\chi}$ forms DoFs of the virtual element space $\ZK$. 
With this aim, let the numbers of vertex functionals corresponding to
$D1$ and $D2$ be $\mathcal{N}_{V}$, the numbers of edge momentums
corresponding to $D3$ and $D4$ be $\mathcal{N}_{e}$ and the numbers
of cell momentums corresponding to $D5$ be $\mathcal{N}_E$.
Hence the cardinality of
$\boldsymbol{\chi}=N^{\dof}_E:=\mathcal{N}_{V}+\mathcal{N}_{e}+\mathcal{N}_{E}$.
Globally, the total DoFs will be denoted by $N^{\dof}$.

\begin{lemma}
The dimension of the auxiliary space $\ZtK$ is $\mathcal{N}_V+\mathcal{N}_e+\frac{(k+1)(k+2)}{2}$.
Moreover, the set of functionals $D1$ to $D4$ with cell moments upto order $k$ form a set of DoF for $\ZtK$.
\end{lemma}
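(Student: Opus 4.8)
The plan is to identify $\ZtK$ with the solution set of a well-posed clamped biharmonic problem on $E$, so that membership is parametrized by two mutually independent data: the boundary pair $(\phi_h|_{\partial E},\partial_{\boldsymbol{n}}\phi_h|_{\partial E})$ and the polynomial source $\Delta^2\phi_h|_E\in\P_k(E)$. Let $\mathcal{G}$ denote the space of admissible boundary pairs, namely pairs of piecewise polynomials (trace in $\P_r$ on each edge, normal derivative in $\P_{k-1}$ on each edge) compatible with the continuity of $\phi_h$ and of $\nabla\phi_h$ along $\partial E$. The first step is to prove that the linear map
\[
\ZtK\longrightarrow\mathcal{G}\times\P_k(E),\qquad
\phi_h\longmapsto\bigl((\phi_h|_{\partial E},\,\partial_{\boldsymbol{n}}\phi_h|_{\partial E}),\,\Delta^2\phi_h\bigr),
\]
is an isomorphism. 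Injectivity is immediate: if the image vanishes then $\phi_h$ solves $\Delta^2\phi_h=0$ in $E$ with zero Dirichlet and Neumann traces, hence $\phi_h=0$ by uniqueness. For surjectivity I would lift arbitrary data in $\mathcal{G}$ and solve the homogeneous biharmonic problem in $H^2_0(E)$, where the form $\int_E\Delta u\,\Delta v\,\dx$ is coercive by a Poincar\'e inequality; the resulting $\phi_h$ satisfies $\phi_h|_e\in\P_r(e)$, $\partial_{\boldsymbol{n}}\phi_h|_e\in\P_{k-1}(e)$ and $\Delta^2\phi_h\in\P_k(E)$ by construction, so it lies in $\ZtK$. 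Consequently $\dim\ZtK=\dim\mathcal{G}+\dim\P_k(E)$.

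The second step is the edge--vertex bookkeeping for $\dim\mathcal{G}$. Using the continuity of $\nabla\phi_h$ along $\partial E$, I parametrize an element of $\mathcal{G}$ by the value $\phi_h(\Xi)$ and the single gradient vector $\nabla\phi_h(\Xi)$ at each of the $N_E$ vertices---that is $3N_E=\mathcal{N}_V$ quantities---together with interior moments on each edge. The gradient at a vertex simultaneously supplies the endpoint tangential derivatives of the two adjacent trace polynomials and the endpoint normal derivatives of the two adjacent normal-derivative polynomials, so the compatibility is automatic and no vertex datum is counted twice. On a fixed edge the trace in $\P_r(e)$ is then pinned down by its two endpoint values, two endpoint tangential derivatives, and $r-3$ interior moments ($D3$), and the normal derivative in $\P_{k-1}(e)$ by its two endpoint normal derivatives and $k-2$ interior moments ($D4$). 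Summing the interior parameters over the $N_E$ edges gives $\mathcal{N}_e$, whence $\dim\mathcal{G}=\mathcal{N}_V+\mathcal{N}_e$ and $\dim\ZtK=\mathcal{N}_V+\mathcal{N}_e+\tfrac{(k+1)(k+2)}{2}$.

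For the unisolvence claim I observe that $D1$--$D4$ together with the cell moments up to order $k$ number exactly $\mathcal{N}_V+\mathcal{N}_e+\tfrac{(k+1)(k+2)}{2}=\dim\ZtK$, so it suffices to show they annihilate only $\phi_h=0$. Vanishing of $D1$--$D4$ forces, edge by edge, both boundary data $\phi_h|_{\partial E}$ and $\partial_{\boldsymbol{n}}\phi_h|_{\partial E}$ to be zero, by the very counting of the previous paragraph. Then, since $\Delta^2\phi_h\in\P_k(E)$, vanishing of all cell moments up to order $k$ yields in particular $\int_E(\Delta^2\phi_h)\,\phi_h\,\dx=0$; integrating by parts twice and dropping the boundary integrals (which vanish because $\phi_h$ and $\partial_{\boldsymbol{n}}\phi_h$ are zero on $\partial E$) gives $\int_E|\Delta\phi_h|^2\,\dx=0$. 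Hence $\Delta\phi_h=0$, and a harmonic function with zero trace must vanish, so $\phi_h=0$.

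The main obstacle is the vertex/edge accounting of the second step: the continuity of $\nabla\phi_h$ couples the trace and normal-derivative data at the shared vertices, and the parametrization must be arranged so that this coupling is encoded exactly once. The analytic inputs---unisolvence of the clamped biharmonic problem and the two-fold integration by parts---are standard, the only delicate point being that $p=\Delta^2\phi_h$ is genuinely an $L^2(E)$ source, so that the variational solution in $H^2_0(E)$ exists. The degenerate low-order cases ($r=3$, so $D3$ is empty, and $k=2$, so $D4$ is empty) should be checked against the stated conventions to confirm the counts persist.
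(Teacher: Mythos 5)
Your proof is correct and follows essentially the same route as the paper's: decompose an element of $\ZtK$ into its boundary pair (trace and normal derivative, counted edge-by-edge via $D1$--$D4$) and its bilaplacian in $\P_k(E)$, with the interior count justified by the isomorphism between $\P_k(E)$ sources and cell moments up to order $k$, established through the double integration by parts $\int_E(\Delta^2\phi_h)\phi_h=\int_E|\Delta\phi_h|^2$ for clamped data. The paper merely compresses this (citing the equivalent-projectors argument of Ahmad et al.\ for the interior step), so your write-up is a more detailed rendering of the same argument rather than a different one.
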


\begin{proof}
For each element $v \in \ZtK$, we can choose the DoFs of $v$ as the trace
of $v$ on $\partial E$ and $\nabla v $ on $\partial E$ (polynomial of degree $k$)
and the moments of $v$ in $E$ upto order $k$.
Clearly, the DoFs $D1$ to $D2$ ensure $C^1$ continuity at the vertices. Further, with the help
of $D1$ and $D2$, we can identify a polynomial of degree $\leq 3$ uniquely. To compute a polynomial
of degree $r$, additional $(r-3)$ information can be obtained from $D3$.
The DoFs $D2$ and $D4$ can identify a polynomial of degree $k-1$ on each edge.
Further, proceeding in the analogous way as \cite{AABMR13}, we can prove that a
function in $H_0^2(E)$ with $\Delta^2 v \in \mathbb{P}_r(E)$, there is a mapping
between the moments of the functions upto order $r$ and their
bilaplacian of order $r$. Hence, the dimension of $\ZtK$ is
$\mathcal{N}_V+\mathcal{N}_e+\text{dim}(\P_k(E))$. Note that, the dimension
of the set containing momentum upto order $k$ is same as dimension of $\P_k(E)=\frac{(k+1)(k+2)}{2}$.
\end{proof}

In the next result, we will show that the dimension of $\mathcal{Z}_h^{E}$
is same as the dimension of $C^1$ conforming VEM space defined in \cite{BM13}.
\begin{lemma}
The dimension of $\ZK$ is $\mathcal{N}_V+\mathcal{N}_e+\frac{(k-3)(k-2)}{2}$.
The set of functionals $D1-D5$ (cf.~$\boldsymbol{\chi}$) form DoFs of the space $\ZK$.  
\end{lemma}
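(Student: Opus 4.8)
The plan is to sandwich $\dim\ZK$ between two bounds that coincide: a lower bound coming from the number of linear conditions that carve $\ZK$ out of $\ZtK$, and an upper bound coming from injectivity of the functionals $D1$--$D5$ on $\ZK$. Since the number of functionals in $D1$--$D5$ equals $\mathcal{N}_V+\mathcal{N}_e+\dim\P_{k-4}(E)=\mathcal{N}_V+\mathcal{N}_e+\frac{(k-3)(k-2)}{2}$, matching the two bounds proves both the dimension formula and the unisolvence at once.

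For the lower bound I would count the constraints in \eqref{intended_space:VEM}. As $\mathcal{M}_\alpha^{2,\star}(E)$ is the set of monomials of degree exactly $\alpha$, it has $\alpha+1$ elements (with the convention $\mathcal{M}_{-1}^{2,\star}(E)=\emptyset$ when $k=2$), so the conditions for $\alpha=k,k-1,k-2,k-3$ total $(k+1)+k+(k-1)+(k-2)=4k-2$. Because $\ZK$ is obtained from $\ZtK$ by imposing these $4k-2$ linear conditions, the previous lemma yields
\[
\dim\ZK\;\ge\;\dim\ZtK-(4k-2)\;=\;\mathcal{N}_V+\mathcal{N}_e+\frac{(k+1)(k+2)}{2}-(4k-2)\;=\;\mathcal{N}_V+\mathcal{N}_e+\frac{(k-3)(k-2)}{2}.
\]

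For the upper bound I would show that $D1$--$D5$ are injective on $\ZK$: suppose $\phi_h\in\ZK$ annihilates all of $D1$--$D5$ and prove $\phi_h=0$. The crucial point, already advertised in the construction, is that $\PiK\phi_h$ is computable from $D1$--$D5$. Indeed, for $q\in\P_{k}(E)$ integration by parts turns $\calA(\phi_h,q)$ into a volume term $\int_E\phi_h\,\Delta^2q$ with $\Delta^2q\in\P_{k-4}(E)$, fixed by $D5$, plus boundary terms depending only on the traces of $\phi_h$ and $\nabla\phi_h$ on $\partial E$, fixed by $D1$--$D4$; the averages $\widehat{\phi_h},\widehat{\nabla\phi_h}$ are given by $D1$--$D2$. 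Hence vanishing of $D1$--$D5$ forces $\PiK\phi_h=0$. The defining constraints of $\ZK$ then give $\int_E m_\alpha\phi_h=\int_E m_\alpha\PiK\phi_h=0$ for every monomial of degree $\alpha$ with $k-3\le\alpha\le k$, while $D5$ gives $\int_E q\,\phi_h=0$ for every $q$ of degree $\le k-4$; together these say that all cell moments of $\phi_h$ up to order $k$ vanish. Since $\phi_h\in\ZtK$ and, by the previous lemma, $D1$--$D4$ together with the cell moments up to order $k$ are unisolvent on $\ZtK$, we conclude $\phi_h=0$. This gives $\dim\ZK\le\mathcal{N}_V+\mathcal{N}_e+\frac{(k-3)(k-2)}{2}$, which together with the lower bound closes the argument (and shows a posteriori that the $4k-2$ constraints are independent).

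The hard part will be the computability of $\PiK$ from $D1$--$D5$, i.e.\ the integration-by-parts step for the biharmonic form $\calA$. One must verify that the boundary contributions produced when integrating $\calA(\phi_h,q)$ by parts twice (the bending-moment and effective-shear terms) depend only on $\phi_h|_{\partial E}$ and $\partial_{\boldsymbol{n}}\phi_h|_{\partial E}$, the polynomial data fixed by $D1$--$D4$, so that no interior information beyond the moments against $\P_{k-4}(E)$ in $D5$ is ever required. This is precisely the feature that distinguishes $\ZK$ from $\ZtK$ and makes the reduced cell degrees of freedom $D5$ sufficient for unisolvence.
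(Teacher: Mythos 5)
Your proposal is correct and follows essentially the same route as the paper: the same lower bound $\dim\ZK\ge\dim\ZtK-(4k-2)$ from counting the $4k-2$ moment constraints, and the same unisolvence argument (vanishing of $D1$--$D5$ forces $\PiK\phi_h=0$, hence all cell moments up to order $k$ vanish, hence $\phi_h=0$ by the previous lemma). The only difference is that you spell out, via integration by parts of $\calA(\phi_h,q)$, why $\PiK\phi_h$ is determined by $D1$--$D5$ — a step the paper dismisses with ``clearly'' — which is a welcome addition rather than a deviation.
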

\begin{proof}
First, it can be observed that the dimension of
$\mathcal{M}_k^{2,\star}(E) \cup \mathcal{M}_{k-1}^{2,\star}(E) \cup \mathcal{M}_{k-2}^{2,\star}(E) \cup \mathcal{M}_{k-3}^{2,\star}(E)$
is $4k-2$. Therefore, 
\begin{equation*}
\begin{split}
\text{dim}(\ZK) &\geq \text{dim}(\ZtK)-(4k-2) \\
&=\mathcal{N}_V+\mathcal{N}_e+\frac{(k+1)(k+2)}{2}-(4k-2) \\
&=\mathcal{N}_V+\mathcal{N}_e+\frac{(k-3)(k-2)}{2}.
\end{split}                 
\end{equation*} 
Now, we prove that a function $v \in \ZK$ that vanishes on $\partial E$
with $\nabla v$ vanishes on $\partial E$ and has zero moments upto order
$k-4$ is identically zero. Clearly, $D1-D5$ are zero implies
$\PiK$ is zero which implies that all momentums of order $k, k-1, k-2$ and $k-3$ are zero.
Therefore, we deduce that $v=0$ and we conclude the proof.
\end{proof}

Weak formulation \eqref{weak_sol} consists of  non-stationary parts that
require $L^2$ projection operator to be computed.
The $C^1$ space  introduced in \cite{BM13}, does not provide enough information
to compute the orthogonal $L^2$ projection operator.
With the new space $\ZK$ we can compute $L^2$ projection operators
onto $\P_{k-1}(E)$ and $\P_{k}(E)$ keeping same the computational cost.

On a polygon $E$, we define orthogonal $L^2$ projection
operator $\Pi^{k-1}_E:\ZK \rightarrow \P_{k-1}(E)$ by 
\begin{equation*}
\int_E \Pi^{k-1}_E v_h q=\int_E v_h q \quad  \forall \ q \in \P_{k-1}(E).
\end{equation*}
Globally, the projection operator $\Pi^{k-1}$ is defined in $L^2(\Omega)$ as
\begin{equation*}
\Pi^{k-1}v_h|_E:=\Pi^{k-1}_Ev_h \quad \forall v_h \in \mathcal{Z}_h.
\end{equation*}
 
The following result shows that  $\Pi^{k-1}_E$ is computable.
\begin{lemma}
\label{projection_k-1}
The polynomial $\Pi^{k-1}_E v_h$ can be expressed explicitly
in terms of DoFs $D1-D5$ for all $v_h \in \mathcal{Z}_h^E$. 
\end{lemma}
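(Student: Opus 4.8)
The plan is to reduce the statement to showing that all moments $\int_E m\, v_h$ against scaled monomials $m$ of degree at most $k-1$ are computable from the functionals $\boldsymbol{\chi}$. First I would observe that, expanding $\Pi^{k-1}_E v_h$ in the scaled-monomial basis of $\mathcal{M}_{k-1}^2(E)$ and testing the defining relation $\int_E \Pi^{k-1}_E v_h\, q = \int_E v_h\, q$ against each such monomial, one obtains a square linear system whose coefficient matrix is the (invertible, fully computable) monomial mass matrix on $E$ and whose right-hand side is exactly the vector of these moments. Hence computability of $\Pi^{k-1}_E v_h$ is equivalent to computability of those moments, and the rest of the argument would focus on them.

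Next I would split by the degree of $m$. For $\deg m \le k-4$ the monomial lies in $\mathcal{M}_{k-4}^2(E)$, so $\frac{1}{h_E^2}\int_E m\, v_h$ is directly one of the interior degrees of freedom $D5$ and requires nothing further. The delicate part, and what I expect to be the main obstacle, is that the moments of degrees $k-3$, $k-2$ and $k-1$ are not carried by any of $D1$--$D5$ as such, so they cannot simply be read off the degrees of freedom.

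The resolution I would use is precisely the enhancement constraint built into $\ZK$ in \eqref{intended_space:VEM}: every $v_h \in \ZK$ satisfies $\int_E m\, v_h = \int_E m\, \PiK v_h$ for all $m \in \mathcal{M}_\alpha^{2,\star}(E)$ with $\alpha = k-3, k-2, k-1$ (and $\alpha=k$). Thus each missing moment of $v_h$ equals the corresponding moment of the polynomial $\PiK v_h$. Since $\PiK v_h \in \P_k(E)$ is computable from $\boldsymbol{\chi}$, hence from $D1$--$D5$, by the construction of $\PiK$, the integrals $\int_E m\, \PiK v_h$ reduce to ordinary integrals of a known polynomial over $E$ and are therefore computable. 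Collecting the two degree ranges then covers all degrees $0,\dots,k-1$, which yields the full moment vector and hence $\Pi^{k-1}_E v_h$.

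In short, I expect the only real content to be this gap-closing step: the top degree reached by $D5$ is $k-4$, whereas the $L^2$ projection onto $\P_{k-1}(E)$ needs moments up to degree $k-1$, and it is exactly the constraints of \eqref{intended_space:VEM} that bridge the three missing degrees $k-3$, $k-2$, $k-1$. Everything else (the mass-matrix inversion, the monomial expansion) is routine linear algebra over computable quantities.
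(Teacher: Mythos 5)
Your proposal is correct and follows essentially the same route as the paper: the paper splits $q\in\P_{k-1}(E)$ into a part $q_1\in\P_{k-1}(E)\setminus\P_{k-4}(E)$ handled via the enhancement constraint $\int_E v_h q_1=\int_E \PiK v_h\,q_1$ built into \eqref{intended_space:VEM}, and a part $q_2\in\P_{k-4}(E)$ read off from the cell moments $D5$, which is exactly your degree-splitting argument. Your additional remarks on the mass-matrix reduction are routine and consistent with what the paper leaves implicit.
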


\begin{proof}
Let $q \in \P_{k-1}(E)$. We can split $q$ into two polynomials $q_1$ and $q_2$
such that $q_1 \in \mathcal{P}_{k-1}(E)\setminus \mathcal{P}_{k-4}(E)$ and
$q_2 \in \P_{k-4}(E)$.

Using the definition of the $\Pi^{k-1}_E$, and the modified
virtual element space $\mathcal{Z}_h^E$, we have for all $q \in \P_{k-1}(E)$
\begin{equation*}
\begin{split}
\int_E \Pi^{k-1}_E v_h q&=\int_E v_h q \\
                   &=\int_E \PiK v_h q_1+\int_E v_h q_2.
\end{split}
\end{equation*}
Using cell momentum, we can compute the above integration.
\end{proof}

Now, we state that the $L^2$-orthogonal projection operator onto $\P_{k}(E)$ is also computable.
\begin{lemma}
The operator $\Pi^k_E:\mathcal{Z}_h^E \rightarrow \P_{k}(E)$ is computable for all $v_h \in \mathcal{Z}_h^E$.
\end{lemma}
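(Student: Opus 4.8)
The plan is to argue exactly as in the proof of Lemma~\ref{projection_k-1}, but pushing the splitting one polynomial degree higher. By definition of the $L^2$-orthogonal projection, computing $\Pi^k_E v_h$ amounts to evaluating $\int_E v_h\,q$ for every $q\in\P_k(E)$; once these quantities are known, $\Pi^k_E v_h$ is recovered by inverting the (computable) mass matrix of $\P_k(E)$ against a chosen basis. So the whole task reduces to showing that $\int_E v_h\,q$ is computable from the degrees of freedom $D1$--$D5$ for each $q\in\P_k(E)$.

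First I would decompose $q = q_1 + q_2$, where $q_2\in\P_{k-4}(E)$ and $q_1\in\P_k(E)\setminus\P_{k-4}(E)$ is spanned by the scaled monomials of degrees $k-3,k-2,k-1,k$. The term $\int_E v_h\,q_2$ is immediate: it is a linear combination of the cell moments $\frac{1}{h_E^2}\int_E q\,\phi_h$ with $q\in\mathcal{M}_{k-4}^2(E)$, which are precisely the degrees of freedom $D5$. For $\int_E v_h\,q_1$, I would invoke the defining constraint of the modified space $\ZK$ in~\eqref{intended_space:VEM}: for every scaled monomial $m_\alpha\in\mathcal{M}_\alpha^{2,\star}(E)$ with $\alpha=k,k-1,k-2,k-3$ one has $\int_E m_\alpha\,v_h=\int_E m_\alpha\,\PiK v_h$. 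Since these four families of monomials span exactly $\P_k(E)\setminus\P_{k-4}(E)$, this lets me replace $\int_E v_h\,q_1$ by $\int_E \PiK v_h\,q_1$, and the latter is computable because $\PiK v_h$ is a fully computable polynomial depending only on $\boldsymbol{\chi}$. Adding the two contributions yields $\int_E v_h\,q$ in terms of $D1$--$D5$, and hence $\Pi^k_E v_h$.

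The step I expect to carry the actual content is the degree bookkeeping in the second paragraph: one must verify that the monomials of degrees $k-3$ through $k$ frozen in the definition of $\ZK$ are exactly those not already reached by the cell moments $D5$, which stop at degree $k-4$, so that together they span all of $\P_k(E)$ with no degree missed or double-counted. A dimension count confirms this, since the monomials of degrees $k-3,k-2,k-1,k$ number $4k-2=\dim\P_k(E)-\dim\P_{k-4}(E)$, in agreement with the cardinality already recorded for the frozen monomials. This is the only place where the extension of the space of~\cite{BM13} is genuinely used, and it is precisely what makes $\Pi^k_E$ available here whereas it was not before; everything else is the routine inversion of the $\P_k(E)$ mass matrix, which is computable now that both factors are polynomials with known moments.
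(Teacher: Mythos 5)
Your proposal is correct and is exactly the argument the paper intends: its own proof of this lemma is a one-line reference back to Lemma~\ref{projection_k-1}, whose splitting into a $\P_{k-4}(E)$ part (handled by $D5$) and a complementary part of degrees $k-3$ through $k$ (handled via the constraint $\int_E m_\alpha\,\phi_h=\int_E m_\alpha\,\PiK\phi_h$ in \eqref{intended_space:VEM}) you have reproduced one degree higher, with the dimension count $4k-2=\dim\P_k(E)-\dim\P_{k-4}(E)$ correctly verified.
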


\begin{proof}
Proceeding in an analogous way as in Lemma~\ref{projection_k-1}, the result follows.
\end{proof}

\subsection{Discretization of bilinear forms}\label{bili-formdfg}

In this subsection, we will employ the projection operators
to discretize the bilinear forms. For, each polygonal element $E$,
we define $\mathcal{A}_h^E(\cdot,\cdot): \ZK \times \ZK \rightarrow \mathbb{R}$, and 
$a_h^{x,E}(\cdot,\cdot): \ZK \times \ZK \rightarrow \mathbb{R}$, and $m_h^E(\cdot,\cdot):\ZK \times \ZK \rightarrow \mathbb{R}$ as follows

\begin{equation*}
\begin{split}
\mathcal{A}_h^E(\omega,v)& :=\mathcal{A}^E(\PiK \omega, \PiK v)+\mathcal{S}_{\Delta}^E((I-\PiK)\omega,(I-\PiK)v) \qquad \qquad \qquad \; \; \forall \omega,v \in \ZK,\\
a_h^{x,E}(\omega,v)&:=(\Pi^{k-1}_E D_x \omega, \Pi^{k-1}_E D_x v)_{0,E} \qquad \qquad \qquad \qquad \qquad \qquad \qquad \qquad\quad \quad \  \forall \omega,v \in \ZK,\\
m_h^E(\omega,v)&:=(\Pi^k_E \omega,\Pi^k_E v)_{0,E}+\mathcal{S}_m^E((I-\Pi^{k}_E)\omega,(I-\Pi^{k}_E)v) \qquad \qquad \qquad \qquad \qquad \forall \omega,v \in \ZK.
\end{split}
\end{equation*}

The non-polynomial parts $\mathcal{S}_{\Delta}^E(\cdot,\cdot)$
and $\mathcal{S}_{m}^E(\cdot,\cdot)$ are symmetric positive definite bilinear
forms ensure stability of the discrete forms $\mathcal{A}_h^E(\cdot,\cdot)$,
and $m_h^{E}(\cdot,\cdot)$, respectively.
Moreover, the bilinear forms satisfies the following conditions
\begin{equation*}
\begin{split}
&\alpha_{\ast} \mathcal{A}^E(v_h,v_h)\leq \mathcal{S}_{\Delta}^E(v_h,v_h) \leq \alpha^{\ast} \mathcal{A}^E(v_h,v_h) \qquad  \qquad \; \; \forall v_h \in Ker(\PiK), \\
&\gamma_{\ast} (v_h,v_h)_{0,E}\leq \mathcal{S}_{m}^E(v_h,v_h) \leq \gamma^{\ast} (v_h,v_h)_{0,E} \qquad \qquad \quad  \forall v_h \in Ker(\Pi^k_E),
\end{split}
\end{equation*} 
where $\alpha_{\ast},\alpha^{\ast},\gamma_{\ast},\gamma^{\ast}$ are positive constants independent of polygon $E$, and $Ker(\Pi)$ denotes kernel of a operator $\Pi$.
The global bilinear forms are defined as addition of local contribution.
\begin{align}
\mathcal{A}_h(\omega,v)&:=\sum_{E \in \O_h}\mathcal{A}_h^E(\omega,v) \quad \forall \omega,v \in \mathcal{Z}_h, \label{glob_biharmonic}\\
 a_h^x(\omega,v)&:=\sum_{E \in \O_h} a_h^{x,E}(\omega,v)\quad \forall \omega,v \in \mathcal{Z}_h,\label{glob_elliptic} \\
 m_h(\omega,v)&:=\sum_{E \in \O_h} m_h^E(\omega,v) \quad \forall \omega,v \in \mathcal{Z}_h.\label{glob_mass}
\end{align}

The discrete bilinear forms satisfy  consistency on polynomials and stability in the following sense.
 
\begin{lemma}
\textbf{(Polynomial Consistency)} For each polygonal element
$E\in \O_h$, and assume that $q \in \P_k(E)$ with $k\geq 2$ and $v_h \in \ZK$,
the local bilinear forms satisfy the following property:
\begin{align*}
&\mathcal{A}_h^E(q,v_h)=\mathcal{A}^E(q,v_h),  \\
&m_h^E(q,v_h)=(q,v_h)_{0,E}, \\
& a_h^{x,E}(q,v_h)=a^{x,E}(q,v_h),
\end{align*}
where $\mathcal{A}^E(\cdot,\cdot)$ and $a^{x,E}(\cdot,\cdot)$ are
restriction to element $E$ of the corresponding global forms.
\label{poly:consistency}
\end{lemma}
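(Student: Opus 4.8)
The plan is to verify the three identities one at a time, exploiting the common structure of the discrete forms: each is the sum of a \emph{consistency} piece, obtained by inserting the relevant projection into the exact form, and a \emph{stabilization} piece acting on the kernel of that projection. The single mechanism behind all three cases is that when one of the two arguments is a polynomial $q$ of the admissible degree, the matching projection reproduces it, so that $(I-\Pi)q=0$ kills the stabilization term, after which the defining (Galerkin/orthogonality) relation of the projection---together with the symmetry of the form---collapses the remaining consistency term onto the exact form.

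Concretely, for $\mathcal{A}_h^E$ I would first record that $\PiK$ reproduces $\P_k(E)$. Indeed, for $q\in\P_k(E)$ the defining relation $\calA(\PiK q,p)=\calA(q,p)$ for all $p\in\P_k(E)$ gives $\calA(\PiK q-q,\PiK q-q)=0$; since the kernel of the bending form $\mathcal{A}^E$ is $\P_1(E)$, the difference $\PiK q-q$ is affine, and the two averaging conditions $\widehat{\PiK q}=\widehat{q}$, $\widehat{\nabla\PiK q}=\widehat{\nabla q}$ then force it to vanish, so $\PiK q=q$. Consequently $(I-\PiK)q=0$ and the stabilization $\mathcal{S}_\Delta^E$ drops out, leaving $\mathcal{A}_h^E(q,v_h)=\mathcal{A}^E(q,\PiK v_h)$. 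Applying the defining relation of $\PiK$ with test polynomial $q$ and using the symmetry of $\mathcal{A}^E$ yields $\mathcal{A}^E(q,\PiK v_h)=\mathcal{A}^E(\PiK v_h,q)=\mathcal{A}^E(v_h,q)=\mathcal{A}^E(q,v_h)$, which is the claim.

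The remaining two identities follow the same pattern with the $L^2$ projections in place of $\PiK$. For $m_h^E$, since $\Pi^k_E$ is the $L^2$ projection onto $\P_k(E)$ we have $\Pi^k_E q=q$ for $q\in\P_k(E)$, so $(I-\Pi^k_E)q=0$ removes $\mathcal{S}_m^E$, and the $L^2$-orthogonality of $\Pi^k_E$ against $\P_k(E)$ gives $m_h^E(q,v_h)=(q,\Pi^k_E v_h)_{0,E}=(q,v_h)_{0,E}$. For $a_h^{x,E}$ the only new point is a degree count: $q\in\P_k(E)$ implies $D_x q\in\P_{k-1}(E)$, hence $\Pi^{k-1}_E D_x q=D_x q$; this form carries no stabilization, so $a_h^{x,E}(q,v_h)=(D_x q,\Pi^{k-1}_E D_x v_h)_{0,E}$, and $L^2$-orthogonality in $\P_{k-1}(E)$ with test function $D_x q$ collapses this to $(D_x q,D_x v_h)_{0,E}=a^{x,E}(q,v_h)$.

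None of the steps is genuinely difficult; the proof is essentially a bookkeeping exercise. The only point requiring a little care---and the step I would treat as the crux---is the polynomial reproduction $\PiK q=q$, because $\mathcal{A}^E$ is only a seminorm on $\P_k(E)$ (its kernel being $\P_1(E)$); one must combine the energy condition with both averaging conditions $D1$--$D2$ to pin down the affine component. By contrast, reproduction for the $L^2$ projections $\Pi^k_E$ and $\Pi^{k-1}_E$ is immediate, since those forms are genuine inner products. Throughout I would only invoke the defining relations of the projections and the symmetry of the forms, so the argument is independent of the particular choice of the stabilizations $\mathcal{S}_\Delta^E$ and $\mathcal{S}_m^E$.
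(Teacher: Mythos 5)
Your proof is correct, and the paper in fact states this lemma without any proof, as is customary in the VEM literature; your argument is precisely the standard one that the authors implicitly rely on. The three verifications (polynomial reproduction by the relevant projector annihilates the stabilization term, after which the defining orthogonality relation of the projector and symmetry collapse the consistency term onto the exact form) are all sound, including the one genuinely nontrivial point you correctly isolate, namely that $\PiK q=q$ requires combining the energy condition with the two vertex-averaging conditions because $\mathcal{A}^E$ has kernel $\P_1(E)$.
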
 

\begin{lemma} 
\textbf{(Stability)} There exist four  positive constants $\aL,\aU,\gmL,\gmU$
independent of the polygon $E$ such that
\begin{align*}
& \aL\mathcal{A}^E(v_h,v_h)\leq \mathcal{A}^E_h(v_h,v_h) \leq \aU \mathcal{A}^E(v_h,v_h) \qquad \forall v_h \in \ZK,\\
& \gmL (v_h,v_h)_{0,E} \leq m_h^E(v_h,v_h) \leq \gmU (v_h,v_h)_{0,E} \qquad \forall v_h \in \ZK.
\end{align*}
\label{stability}
\end{lemma}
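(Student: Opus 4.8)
The plan is to reduce both inequalities to a Pythagorean splitting induced by the relevant projection, combined with the spectral bounds already assumed on the stabilizations $\mathcal{S}_{\Delta}^E$ and $\mathcal{S}_m^E$. The two estimates are handled by the same mechanism, so I would prove the energy estimate in detail and then indicate the (simpler) repetition for the mass term.

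For the first inequality I would write $v_h=\PiK v_h+(I-\PiK)v_h$. The key step is the Galerkin orthogonality $\calA^E(\PiK v_h,(I-\PiK)v_h)=0$, which follows from the defining relation $\calA^E(\PiK v_h,q)=\calA^E(v_h,q)$ for all $q\in\P_k(E)$ by taking $q=\PiK v_h$ and using the symmetry of $\calA^E$. This yields the splitting $\calA^E(v_h,v_h)=\calA^E(\PiK v_h,\PiK v_h)+\calA^E\big((I-\PiK)v_h,(I-\PiK)v_h\big)$. Since $\PiK$ is idempotent (it fixes $\P_k(E)$, and the two average conditions pin down the affine part lying in the kernel of $\calA^E$), we have $(I-\PiK)v_h\in Ker(\PiK)$, so the hypothesis $\alpha_{\ast}\calA^E\le \mathcal{S}_{\Delta}^E\le\alpha^{\ast}\calA^E$ applies to this component. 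Recalling $\mathcal{A}_h^E(v_h,v_h)=\calA^E(\PiK v_h,\PiK v_h)+\mathcal{S}_{\Delta}^E\big((I-\PiK)v_h,(I-\PiK)v_h\big)$ and inserting the stabilization bounds, I sandwich $\mathcal{A}_h^E(v_h,v_h)$ between $\min\{1,\alpha_{\ast}\}\,\calA^E(v_h,v_h)$ and $\max\{1,\alpha^{\ast}\}\,\calA^E(v_h,v_h)$, so that $\aL:=\min\{1,\alpha_{\ast}\}$ and $\aU:=\max\{1,\alpha^{\ast}\}$, both independent of $E$.

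The second inequality is identical in structure but simpler, since $(\cdot,\cdot)_{0,E}$ is a genuine inner product rather than a seminorm. Here the orthogonality $(\Pi^k_E v_h,(I-\Pi^k_E)v_h)_{0,E}=0$ is immediate from the definition of the $L^2$-projection, giving $(v_h,v_h)_{0,E}=(\Pi^k_E v_h,\Pi^k_E v_h)_{0,E}+\big((I-\Pi^k_E)v_h,(I-\Pi^k_E)v_h\big)_{0,E}$. Applying $\gamma_{\ast}(\cdot,\cdot)_{0,E}\le\mathcal{S}_m^E\le\gamma^{\ast}(\cdot,\cdot)_{0,E}$ on the kernel element $(I-\Pi^k_E)v_h\in Ker(\Pi^k_E)$ and arguing exactly as above produces $\gmL:=\min\{1,\gamma_{\ast}\}$ and $\gmU:=\max\{1,\gamma^{\ast}\}$.

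There is no genuine analytic obstacle here; the whole content lies in using the structure of the two projections correctly. The single point needing care is the orthogonality for $\PiK$: unlike the $L^2$ case, it is orthogonality with respect to the bending semi-inner product $\calA^E$, whose kernel is the space of affine functions. I must therefore make sure the splitting identity uses only the vanishing of the mixed term $\calA^E(\PiK v_h,(I-\PiK)v_h)$ and never positive-definiteness of $\calA^E$, which indeed it does. Once the idempotency $\PiK\PiK=\PiK$ (hence $(I-\PiK)v_h\in Ker(\PiK)$) is recorded, both chains of inequalities close, and the four constants depend only on $\alpha_{\ast},\alpha^{\ast},\gamma_{\ast},\gamma^{\ast}$, hence are independent of the polygon $E$.
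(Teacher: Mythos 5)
Your proof is correct, and it is precisely the standard argument that the paper itself omits (the lemma is stated without proof, as is customary in the VEM literature): Pythagorean splitting via the orthogonality $\mathcal{A}^E(\PiK v_h,(I-\PiK)v_h)=0$ and $(\Pi^k_E v_h,(I-\Pi^k_E)v_h)_{0,E}=0$, idempotency of the projectors so that $(I-\PiK)v_h\in Ker(\PiK)$, and the assumed spectral bounds on $\mathcal{S}_{\Delta}^E$ and $\mathcal{S}_m^E$, yielding $\aL=\min\{1,\alpha_{\ast}\}$, $\aU=\max\{1,\alpha^{\ast}\}$ and likewise for the mass form. One small precision: the final sandwich $\min\{1,\alpha_{\ast}\}(a+b)\le a+\alpha_{\ast}b$ does require that both terms of the splitting are nonnegative, i.e.\ positive \emph{semi}-definiteness of $\mathcal{A}^E$ on $H^2(E)$ (which holds for $\sigma\in(0,1)$), not merely the vanishing of the cross term.
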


Further, we would like to assert that for each polygon $E \in \Omega_h$,
the discrete bilinear form $a_h^{x,E}(\cdot,\cdot)$ is bounded.
In fact, for $\omega_h,v_h \in \ZK$, we have that
\begin{equation}
\begin{split}
|a_h^{x,E}(\omega_h,v_h)| &\leq \|\Pi^{k-1}_E D_x \omega_h\|_{0,E}\|\Pi^{k-1}_E D_x v_h\|_{0,E} \\
& \leq \|D_x \omega_h \|_{0,E}\|D_x v_h\|_{0,E} \\
& \leq |\omega_h|_{1,E}|v_h|_{1,E}.
\label{continuous:ax}
\end{split}
\end{equation}

\paragraph{Discretization of nonlocal term and load term.} The model problem \eqref{modl_prob:1}
consists on geometric nonlinearity $S\int_{\O}(D_xu)^2$ which is caused due to the 
stretching
of the plate in $x$ direction. By using $L^2$ projection
operator $\Pi^{k-1}_E$, we discretize the nonlocal term as
$\sum_{E \in \O_h}\int_{E}\left( \Pi^{k-1}_E D_x u_h\right)^2$.
It can be observed that for each element $E$, $ \Pi^{k-1} D_x u_h|_{E} $
is computable from $D1-D5$.  Further, we discretize the load term as follows
\begin{equation}
\begin{split}
(g_h,v_h)_h:=\sum_{E \in \O_h}(\Pi^{k}_E g, v_h)_{0,E} =\sum_{E \in \O_h}( g,\Pi^{k}_E v_h)_{0,E} \qquad \forall v_h \in \mathcal{Z}_h.\\
\end{split}
\label{load:discretization}
\end{equation} 

\subsection{Semi-discrete scheme}
\label{semi:discrete:sec}

By exploiting \eqref{glob_biharmonic}-\eqref{glob_mass} and \eqref{load:discretization},
we define the semi-discrete virtual element approximation of \eqref{weak_sol} as follows.
Find $u_h \in L^2(0,T;\mathcal{Z}_h)$ with
$D_t u_h \in L^2(0,T;\mathcal{Z}_h) $ and $D_{tt} u_h \in L^2(0,T;\mathcal{Z}_h)$ such that 
\begin{align}
& m_h(D_{tt}u_h,v_h)+\delta m_h(D_{t}u_h,v_h)+\calA_h(u_h,v_h)
+\Big[S\sum_{E \in \O_h}\int_{E} |\Pi^{k-1}_E D_x u_h|^2 -P \Big]a_h^x(u_h,v_h)\nonumber\\
&\hspace{8cm}=(g_h,v_h)_h \quad \forall v_h \in \mathcal{Z}_h \quad \text{for} \ a.e.\  t \in (0,T] .
    \label{semi_dis_schm:1}\\
    & u_h(x,y,0)=u_{h,0}, \label{semi_dis_schm:2} \\
    & D_tu_h(x,y,0)=\omega_{h,0}, \label{semi_dis_schm:3} 
\end{align}
where $u_{h,0}$ and $\omega_{h,0}$ are certain approximations
of $u_0$ and $\omega_0$, respectively.
Let us denote the nonlinear term
in \eqref{semi_dis_schm:1}
by
$$\mathcal{C}_h(u_h):=\Big[S\sum_{E \in \O_h}\Vert\Pi^{k-1}_E D_xu_h\Vert_{0,E}^2 -P \Big].$$
In what follows, we will show that the term $\mathcal{C}_h(u_h)a_h^x(u_h,v_h)$
is Lipschitz continuous. In fact, let $u_{h,1},u_{h,2}\in\mathcal{Z}_h$ be two elements.
Then, it follows that
\begin{equation}
\begin{split}
&|\mathcal{C}_h(u_{h,1})a_h^x(u_{h,1},v_h)-\mathcal{C}_h(u_{h,2})a_h^x(u_{h,2},v_h)|\\
&=\Big[S \sum_{E \in \O_h}\Vert\Pi^{k-1}_E D_xu_{h,1}\Vert_{0,E}^2 -P \Big] a_h^x(u_{h,1},v_h)
-\Big[S \sum_{E \in \O_h}\Vert\Pi^{k-1}_E D_xu_{h,2}\Vert_{0,E}^2 -P \Big] a_h^x(u_{h,2},v_h)\\
&= \Big \vert \Big[S \sum_{E \in \O_h}\Vert\Pi^{k-1}_E D_xu_{h,1}\Vert_{0,E}^2 a_h^x(u_{h,1},v_h)-S\sum_{E \in \O_h}\Vert\Pi^{k-1}_E D_xu_{h,2}\Vert_{0,E}^2 a_h^x(u_{h,2},v_h) \Big]\Big \vert\\
& \quad +P \Big \vert\Big[a_h^x(u_{h,2},v_h)-a_h^x(u_{h,1},v_h) \Big] \Big \vert\\
& =S \Big \vert\sum_{E \in \O_h}\Vert\Pi^{k-1}_E D_xu_{h,1}\Vert_{0,E}^2 a_h^x(u_{h,1}-u_{h,2},v_h) \Big \vert+ \Big \vert\Big[ S \sum_{E \in \O_h}\Vert\Pi^{k-1}_E D_xu_{h,1}\Vert_{0,E}^2\\
&\quad-S \sum_{E \in \O_h}\Vert\Pi^{k-1}_E D_xu_{h,2}\Vert_{0,E}^2 \Big] a_h^x(u_{h,2},v_h)\Big \vert+P \Big \vert a_h^x(u_{h,2}-u_{h,1},v_h)\Big \vert \\
&\leq S \|D_xu_{h,1}\|_{0,\Omega}^2 \sum_{E \in \O_h} \|D_x(u_{h,1}-u_{h,2})\|_{0,E}~\|D_xv_h\|_{0,E}+ S \sum_{E \in \O_h} \int_E \Big(|\Pi^{k-1}_E D_x u_{h,1}|\\
 & \quad  +|\Pi^{k-1}_E D_x u_{h,2}|\Big) \times\Big( |\Pi^{k-1}_E D_x u_{h,1}|-|\Pi^{k-1}_E D_x u_{h,2}|\Big) \sum_{E \in \O_h} \|D_xu_{h,2}\|_{0,E} \ \|D_xv_h\|_{0,E}\\
& \quad +P \sum_{E \in \O_h} \|D_x(u_{h,2}-u_{h,1})\|_{0,E} \|D_x v_h\|_{0,E} \\
& \leq C(S,P,\|D_xu_{h,1}\|_{0,\Omega},\|D_x u_{h,2}\|_{0,\Omega}) ~\|D_x(u_{h,1}-u_{h,2})\|_{0,\Omega}~\|D_x v_h\|_{0,\Omega}.
\end{split}
\label{Lipschitz:nonlocal}
\end{equation}

Let us assume that the matrix representation of the bilinear
forms ~$\mathcal{A}_h(\cdot,\cdot)$,~ $m_h(\cdot,\cdot)$, and $a_h^{x}(\cdot,\cdot)$
be $\bold{A},\bold{M}, \text{and}~ \bold{A}^x$, respectively.
Further, the matrix $\bold{M}$ is symmetric and positive definite,
hence the matrix is invertible. Therefore, \eqref{semi_dis_schm:1}
reduces to a system of nonlinear differential equations as below
\begin{align}
&\bold{M} \frac{d^2 \Eta_h}{dt^2}+\bold{M} \frac{d \Eta_h}{dt}+\bold{A}\Eta_h+\bold{C}(\Eta_h)=\bold{G} \label{semi:dis:non:1} \\
&\Eta_h(0)=\Eta_{h,0} \label{semi:dis:non:2}\\
&\frac{d \Eta_h}{dt}(0)=\boldsymbol{\omega}_{h,0}, \label{semi:dis:non:3}
\end{align}
where $\bold{G}$ is load vector.
Clearly \eqref{semi:dis:non:1}-\eqref{semi:dis:non:3} is a system of nonlinear
differentiable equations and the nonlinear function $\bold{C}(\Eta_h)$ is a
Lipschitz continuous with respect to $\Eta_h$. Consequently, by Picard's
Theorem on existence and uniqueness of system of differential equation,
the semi-discrete scheme \eqref{semi:dis:non:1}-\eqref{semi:dis:non:3} has an unique solution.

\subsection{Fully-discrete scheme}
In this section, we discretize the time variable by fully-implicit scheme. Let $N \in \mathbb{N}$
be a positive integer and consider the time step $\Delta t=T/N$ and the time $t_n=n~\Delta t$.
Let the approximation of $u_h$ at time $t=t_n$ is defined as $U^n_h \approx u_h(\cdot,t_n)$,
where $n=0,1,\ldots,N$. By applying {\it finite difference} 
for time variable and $C^1$-VEM for space variable, the fully discrete scheme
of the model problem~\eqref{modl_prob:1} is given by: find $U^n_h\in \mathcal{Z}_h$ such that

\begin{align}
&m_h\left(\frac{U_h^{n}-2U_h^{n-1}+U_h^{n-2}}{\Delta t^2},v_h\right)
+\delta~ m_h\left(\frac{U_h^{n}-U_h^{n-2}}{2 \Delta t},v_h\right) +  \calA_h(U_h^{n},v_h) \nonumber \\
&\quad+
\left[S\sum_{E \in \O_h} \int_{E} |\Pi^{k-1}_E D_x U_h^{n}|^2-P\right]a_h^x(U_h^{n},v_h)  
=  (g_h^{n},v_h)_h,\label{full:discrete:model} \\
&U^0_h:=I_h u_0, \qquad \text{and}~ \qquad U^1_h:=\Delta tI_h \omega_0 +U^0_h. \label{full:discrete:initial} 
\end{align}

In what follows, we would like to highlight that \eqref{full:discrete:model} is  fully implicit scheme and which is unconditionally stable.
Next, we proceed to prove the well posedness
of the fully-discrete scheme. Employing Schauder's fixed point theorem,
we show that the fully discrete scheme has unique solution $U_h^n$
at each time-step $t_n$  and the solution is bounded, i.e $\|U_h^n\|_{2,\O}\leq d$,
where $d$ is a positive constant which will be defined in the subsequent theorem.

We first recollect the  Schauder's fixed point theorem which is stated as below \cite{L69}.
\begin{lemma}
(\textbf{Schauder's Fixed Point Theorem}) Let $\mathcal{K}$ be a Banach
space and $\mathcal{B} \subset \mathcal{K}$ be a compact and convex subset.
If $\mathcal{L}:\mathcal{B} \rightarrow \mathcal{B}$ continuous mapping then $\mathcal{L}$ has a fixed point.
\label{schauder}
\end{lemma}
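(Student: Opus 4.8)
The plan is to reduce the infinite-dimensional statement to a finite-dimensional one and then invoke Brouwer's fixed point theorem, the classical route being through the \emph{Schauder projection}. First I would exploit the compactness of $\mathcal{B}$: for each $\varepsilon>0$ there is a finite $\varepsilon$-net $\{x_1,\dots,x_{N_\varepsilon}\}\subset\mathcal{B}$, i.e.\ the open balls of radius $\varepsilon$ centred at the $x_i$ cover $\mathcal{B}$. Denoting by $\|\cdot\|$ the norm of $\mathcal{K}$, I would then introduce the continuous weights $\mu_i(x):=\max\{0,\varepsilon-\|x-x_i\|\}$ and the map
\begin{equation*}
P_\varepsilon(x):=\frac{\sum_{i=1}^{N_\varepsilon}\mu_i(x)\,x_i}{\sum_{i=1}^{N_\varepsilon}\mu_i(x)},\qquad x\in\mathcal{B}.
\end{equation*}
The denominator never vanishes because the balls cover $\mathcal{B}$, so $P_\varepsilon$ is well defined and continuous; since each $x_i\in\mathcal{B}$ and $\mathcal{B}$ is convex, $P_\varepsilon$ carries $\mathcal{B}$ into the finite-dimensional compact convex polytope $C_\varepsilon:=\mathrm{conv}\{x_1,\dots,x_{N_\varepsilon}\}\subset\mathcal{B}$.

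Next I would record the key approximation property: because $\mu_i(x)\ne0$ only when $\|x-x_i\|<\varepsilon$, every nonzero term contributing to $P_\varepsilon(x)$ obeys that bound, whence
\begin{equation*}
\|P_\varepsilon(x)-x\|\le\varepsilon\qquad\forall x\in\mathcal{B}.
\end{equation*}
Now I would consider the composition $P_\varepsilon\circ\mathcal{L}$ restricted to $C_\varepsilon$. Since $\mathcal{L}$ maps $\mathcal{B}$ (hence $C_\varepsilon$) into $\mathcal{B}$ and $P_\varepsilon$ maps $\mathcal{B}$ into $C_\varepsilon$, the map $P_\varepsilon\circ\mathcal{L}:C_\varepsilon\to C_\varepsilon$ is a continuous self-map of a compact convex subset of a finite-dimensional space. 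Brouwer's fixed point theorem then yields a point $x_\varepsilon\in C_\varepsilon$ with $P_\varepsilon(\mathcal{L}(x_\varepsilon))=x_\varepsilon$.

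Finally I would pass to the limit. Combining the fixed-point identity with the approximation bound gives $\|\mathcal{L}(x_\varepsilon)-x_\varepsilon\|=\|\mathcal{L}(x_\varepsilon)-P_\varepsilon(\mathcal{L}(x_\varepsilon))\|\le\varepsilon$. Choosing $\varepsilon=1/n$ produces a sequence $(x_n)\subset\mathcal{B}$ with $\|\mathcal{L}(x_n)-x_n\|\to0$; compactness of $\mathcal{B}$ extracts a subsequence $x_{n_j}\to x^\ast\in\mathcal{B}$, and continuity of $\mathcal{L}$ gives $\mathcal{L}(x_{n_j})\to\mathcal{L}(x^\ast)$, so that $\mathcal{L}(x^\ast)=x^\ast$. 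The main obstacle is the finite-dimensional reduction itself: constructing $P_\varepsilon$, verifying that it lands in $C_\varepsilon$ while remaining uniformly $\varepsilon$-close to the identity, and appealing to Brouwer's theorem, which carries the genuine topological content and is the only non-elementary ingredient. Since the statement is classical (cf.\ \cite{L69}), in the paper one may simply cite it, but the argument above is the standard self-contained proof.
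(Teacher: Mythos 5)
Your proof is the standard and correct argument: the Schauder projection $P_\varepsilon$ built from the $\varepsilon$-net, the bound $\|P_\varepsilon(x)-x\|\le\varepsilon$, Brouwer's theorem applied to $P_\varepsilon\circ\mathcal{L}$ on the finite-dimensional compact convex set $C_\varepsilon$, and the compactness/continuity limiting argument are all in order. The paper itself gives no proof of this lemma -- it is stated as a classical result and cited from \cite{L69} -- so there is nothing to compare against; your self-contained argument is a valid (and the usual) way to establish it. The only point worth making explicit, if you wanted full rigour, is that Brouwer's theorem applies to $C_\varepsilon$ because the convex hull of finitely many points lies in a finite-dimensional affine subspace and is there a compact convex set, hence homeomorphic to a closed ball of some dimension (or a point); this is a routine remark and does not affect the correctness of what you wrote.
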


\begin{theorem}
\label{wellposed:full:discrete}
Let us assume that the Assumption~\ref{mesh:regularity}
on mesh regularity holds and we assumed that the prestressing
constant $P \in (0, \frac{\TAstr \lambda_1}{2})$.  Then the system of
nonlinear equations \eqref{full:discrete:model}-\eqref{full:discrete:initial}
has a solution and further, we assume that $S \in (0, \frac{\TAstr \lambda_1-P}{2~d}]$, then the solution is 
 unique, where $d$ is the radius of the ball $\calB_d$ is defined as
 $$d:=\left ( C(\TAstr,\TAstR,m_0) \|u_0\|_{2,\O}
 + C(\TGstr,\TGstR) \|\omega_0\|_{2,\O} +\Delta t~C(\delta, \TGstr)\|g\|_{L^{\infty}(0,t_{n};L^2(\O))} \right )^{1/2}.$$
\end{theorem}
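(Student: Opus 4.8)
The plan is to prove existence at a single time level $t_n$ (treating $U_h^{n-1}$ and $U_h^{n-2}$ as known data) by Schauder's theorem, and then to upgrade to uniqueness under the sharper bound on $S$. Since I work in the finite-dimensional space $\mathcal{Z}_h$, the closed ball $\calB_d:=\{v_h\in\mathcal{Z}_h:\|v_h\|_{2,\O}\le d\}$ is compact and convex, matching the hypotheses of Lemma~\ref{schauder}.

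First I would define a fixed-point map $\mathcal{L}:\mathcal{Z}_h\to\mathcal{Z}_h$ by freezing the nonlocal coefficient: given $w_h\in\calB_d$, let $\mathcal{L}(w_h):=U$ solve the linearised problem obtained from \eqref{full:discrete:model} on replacing $\mathcal{C}_h(U_h^n)$ by $\mathcal{C}_h(w_h)$, namely
\[
\Big(\tfrac{1}{\Delta t^2}+\tfrac{\delta}{2\Delta t}\Big)m_h(U,v_h)+\calA_h(U,v_h)+\mathcal{C}_h(w_h)\,a_h^x(U,v_h)=F(v_h)\qquad\forall v_h\in\mathcal{Z}_h,
\]
where $F$ collects the load term \eqref{load:discretization} and the contributions of $U_h^{n-1},U_h^{n-2}$. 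To see that $\mathcal{L}$ is well defined I would check coercivity of the bilinear form on the left: by the positivity of $m_h$, the stability Lemma~\ref{stability} giving $\calA_h(U,U)\ge\TAstr\calA(U,U)\simeq\TAstr\|U\|_{2,\O}^2$, the bound $a_h^x(U,U)\le\|D_xU\|_{0,\O}^2$ from \eqref{continuous:ax}, the sign $\mathcal{C}_h(w_h)\ge-P$, and the eigenvalue inequality \eqref{euthdgfk}, the dangerous term obeys $\mathcal{C}_h(w_h)\,a_h^x(U,U)\ge-\tfrac{P}{\lambda_1}\|U\|_{2,\O}^2$, whence the form is bounded below by $(\TAstr-\tfrac{P}{\lambda_1})\|U\|_{2,\O}^2$. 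The hypothesis $P<\TAstr\lambda_1/2$ is exactly what keeps this coefficient larger than $\TAstr/2>0$, so a finite-dimensional Lax--Milgram argument produces a unique $U$. Continuity of $\mathcal{L}$ then follows because $w_h\mapsto\mathcal{C}_h(w_h)$ is continuous and the uniformly coercive linear system depends continuously on its coefficient.

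Next I would establish the self-map property $\mathcal{L}(\calB_d)\subseteq\calB_d$, which is the heart of the argument. Testing the linearised equation with $v_h=U$, using the coercivity bound just derived on the left and Cauchy--Schwarz with the mass-norm stability of Lemma~\ref{stability} on the right, I would obtain an a priori estimate for $\|U\|_{2,\O}$ in terms of the data and of $\|U_h^{n-1}\|_{2,\O},\|U_h^{n-2}\|_{2,\O}$. The delicate point is to do this so that the resulting bound is \emph{independent of} $1/\Delta t$: one must retain the damping contribution and the telescoping mass differences, propagate the bound inductively from the initialisation \eqref{full:discrete:initial}, and arrange the constants so that the right-hand side is precisely the radius $d$ of the statement. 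I expect this discrete energy bookkeeping to be the main obstacle, since a naive estimate produces spurious factors of $1/\Delta t$; the constants $C(\TAstr,\TAstR,m_0)$, $C(\TGstr,\TGstR)$ and $C(\delta,\TGstr)$ appearing in $d$ (and the square root with first powers of the data norms) are exactly the output of this computation. Once $\|U\|_{2,\O}\le d$ is secured, $\mathcal{L}:\calB_d\to\calB_d$ is a continuous self-map and Lemma~\ref{schauder} yields a fixed point $U_h^n=\mathcal{L}(U_h^n)$, which solves \eqref{full:discrete:model}.

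Finally, for uniqueness I would take two solutions $U_1,U_2\in\calB_d$ of \eqref{full:discrete:model}, subtract the equations and test with $e:=U_1-U_2$. The mass and biharmonic terms together with the $-P\,a_h^x$ part are again bounded below by $\tfrac{\TAstr}{2}\|e\|_{2,\O}^2$, while the genuinely nonlinear $S$-difference is controlled by the Lipschitz bound \eqref{Lipschitz:nonlocal}, whose constant depends on $\|D_xU_1\|_{0,\O},\|D_xU_2\|_{0,\O}\le C d$. This contributes a term of size $\lesssim S d\,\|D_x e\|_{0,\O}^2\le\tfrac{Sd}{\lambda_1}\|e\|_{2,\O}^2$, and the assumption $S\le(\TAstr\lambda_1-P)/(2d)$ is precisely what makes this absorbable into the coercive lower bound, forcing $\|e\|_{2,\O}=0$ and hence $U_1=U_2$. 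I expect the existence/self-map step, with its $\Delta t$-uniform a priori bound, to be substantially harder than this uniqueness step, which is a direct Lipschitz-versus-coercivity comparison.
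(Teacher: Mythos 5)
Your overall architecture coincides with the paper's: freeze the nonlocal coefficient to define a map $L_h(q)$, verify it is well defined by coercivity under $P<\TAstr\lambda_1/2$, show it maps $\calB_d$ into itself, invoke Schauder in the finite-dimensional (hence compact) ball, and prove uniqueness by subtracting two solutions and absorbing the Lipschitz contribution of the $S$-term via $S\le(\TAstr\lambda_1-P)/(2d)$. The well-definedness and uniqueness steps are essentially identical to the paper's and are sound.

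There is, however, a genuine gap in the self-map step, and it is exactly at the point you flag as ``the main obstacle'' without resolving it. You propose to test the frozen equation with $v_h=U$. Writing the scheme in the multiplied-through form
\begin{equation*}
\Big(1+\tfrac{\delta\Delta t}{2}\Big)m_h(U,v_h)+\Delta t^2\calA_h(U,v_h)+\Delta t^2\,\mathcal{C}_h(q)\,a_h^x(U,v_h)
=\Delta t^2(g_h^n,v_h)_h+2\,m_h(U_h^{n-1},v_h)+\big(\tfrac{\delta\Delta t}{2}-1\big)m_h(U_h^{n-2},v_h),
\end{equation*}
the choice $v_h=U$ gives a left-hand side whose $\|U\|_{2,\O}^2$ contribution carries the factor $\Delta t^2$, while the right-hand side contains the $O(1)$ terms $2\,m_h(U_h^{n-1},U)$ and $m_h(U_h^{n-2},U)$; the resulting estimate for $\|U\|_{2,\O}$ therefore degenerates like $\Delta t^{-2}$ and no telescoping structure appears, contrary to what you assert. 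The missing idea is the choice of test function $v_h=(U_h^n-U_h^{n-2})/\Delta t$, i.e.\ the discrete velocity: with this choice the mass term produces the telescoping kinetic-energy differences $\|(U_h^n-U_h^{n-1})/\Delta t\|_{0,\O}^2-\|(U_h^{n-1}-U_h^{n-2})/\Delta t\|_{0,\O}^2$, the damping term is nonnegative, and the stiffness and nonlocal terms yield differences of discrete potential energies; summing over the time levels then gives a bound by the initial data alone, independent of $1/\Delta t$, which is precisely how the radius $d$ in the statement arises. Without this specific test function the self-map property $L_h(\calB_d)\subseteq\calB_d$, and hence the existence half of the theorem, is not established by your argument. (A smaller omission: you do not address the continuity of $L_h$ in the norm needed for Schauder; the paper proves a quantitative Lipschitz estimate $\|L_h(q)-L_h(q_0)\|_{2,\O}\le C\|q-q_0\|_{2,\O}$ rather than relying on generic continuous dependence on coefficients, though this part is routine to fill in.)
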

\begin{proof}
We first rewrite the fully-discrete scheme~\eqref{full:discrete:model} as
\begin{equation*}
\begin{split}
m_h(U^{n}_h,v_h)+& \frac{\delta~\Delta t}{2}~m_h(U^{n}_h,v_h)+ \Delta t^2\mathcal{A}_h(U^{n}_h,v_h) + \Delta t^2 \mathcal{C}_h(U^{n}_h)a_h^x(U^{n}_h,v_h)\\
&= \Delta t^2(g_h^{n},v_h)_h+2~ m_h(U_h^{n-1},v_h)+\left(\frac{\delta~\Delta t}{2}-1\right)m_h(U_h^{n-2},v_h)\qquad\forall v_h\in\mathcal{Z}_h.
\end{split}
\end{equation*}
Further, we define a mapping 
\begin{equation*}
L_h:\mathcal{Z}_h \rightarrow \mathcal{Z}_h
\end{equation*}
by $U_h^n:=L_h(q)$, where $U_h^n$ satisfies
\begin{equation}
\begin{split}
&m_h(U_h^n,v_h)+\frac{\delta~\Delta t}{2}~m_h(U_h^n,v_h)+ \Delta t^2\mathcal{A}_h(U_h^n,v_h)+ \Delta t^2 \mathcal{C}_h(q)a_h^x(U_h^n,v_h) \\
&= \Delta t^2(g_h^{n},v_h)_h+2 m_h(U_h^{n-1},v_h)+\left(\frac{\delta~\Delta t}{2}-1\right)m_h(U_h^{n-2},v_h)\qquad\forall v_h\in\mathcal{Z}_h.
\end{split}
\label{wellposed:proof:2}
\end{equation}
 
\paragraph{Well-posedness of the mapping $\boldsymbol{L_h}$:}
 To show that the mapping $L_h$ is well-posed, it is sufficient
 to prove that $p=L_h(q)$ is the solution of the variational
 problem~\eqref{wellposed:proof:2} for each $q \in \mathcal{Z}_h$.
 
 Now, for fixed values of $\Delta t$, since problem~\eqref{wellposed:proof:2} is a square linear
 system it is enough to prove uniqueness. To this end we assume that
the right hand side vanish, and we test the problem with $v_h=U_h^n$,
to obtain that
\begin{equation*}
\begin{split}
0=&\Big(1+\frac{\delta~\Delta t}{2}\Big)\ m_h(U_h^n,U_h^n)+\Delta t^2 \mathcal{A}_h(U_h^n,U_h^n)
+\Delta t^2 \mathcal{C}_h(q)a_h^x(U_h^n,U_h^n)\\
& \geq \Big(1+\frac{\delta~\Delta t}{2}\Big)~\gmL~\|U_h^n\|_{0,\O}^2+\Delta t^2~\aL~\|U_h^n\|_{2,\O}^2
+\Delta t^2~S \Big( \sum_{E \in \O_h} \|\Pi^{k-1}_E D_x q\|^2_{0,E}\Big) \\
& \quad \times\Big( \sum_{E \in \O_h} \|\Pi^{k-1}_E D_x U_h^n\|^2_{0,E}\Big)-\Delta t^2~ P~\Big( \sum_{E \in \O_h}\|\Pi^{k-1}_E D_xU_h^n\|^2_{0,E} \Big) \\
& \geq \Big(1+\frac{\delta~\Delta t}{2}\Big)~\gmL~\|U_h^n\|_{0,\Omega}^2+\Delta t^2~\aL~\lambda_1 ~\|D_xU_h^n\|_{0,\Omega}^2-\Delta t^2 P \|D_x U_h^n\|^2_{0,\Omega}\\
&\geq \Big(1+\frac{\delta~\Delta t}{2}\Big)~\gmL~\|U_h^n\|_{0,\Omega}^2+\Delta t^2 \Big(\aL~\lambda_1-P \Big)\|D_x U_h^n\|_{0,\Omega}^2\geq 0,
\end{split}
\end{equation*}
where we have used the third inequality in \eqref{euthdgfk}. This shows that $U_h^n=0$.
Thus, \eqref{wellposed:proof:2} has unique solution and $L_h$ is well-defined.

Next, we show that the mapping $L_h:\mathcal{B}_d\rightarrow \mathcal{B}_d$
maps a closed ball to a closed ball of radius $d \in \mathbb{R}$,
where $\mathcal{B}_d:=\{v_h\in \mathcal{Z}_h~:~\|v_h\|_{2,\O}\leq d \}$.

By choosing test function $v_h:= \frac{U_h^n-U_h^{n-2}}{\Delta t}$ in \eqref{wellposed:proof:2}, we obtain
\begin{equation*}
\begin{split}
& m_h \Big(\frac{U_h^n-2 U_h^{n-1}+U_h^{n-2}}{\Delta t^2},\frac{U_h^n-U_h^{n-2}}{\Delta t} \Big)
+\delta ~m_h \left ( \frac{U_h^{n}-U_h^{n-2}}{2 \Delta t},\frac{U_h^{n}-U_h^{n-2}}{\Delta t}\right )\\
&+\mathcal{A}_h \left (U_h^n,\frac{U_h^n-U_h^{n-2}}{\Delta t} \right )
+ \mathcal{C}_h(q) a_h^x \left(U_h^n,\frac{U_h^n-U_h^{n-2}}{\Delta t} \right )=\left (g_h^n,\frac{U_h^n-U_h^{n-2}}{\Delta t} \right )_h.
\end{split}
\end{equation*}
By using stability of $m_h(\cdot,\cdot)$ (cf. Lemma~\ref{stability}), we obtain
\begin{equation*}
\begin{split}
& \frac{C(\TGstr,\TGstR)}{\Delta t }
\left ( \left\|\frac{U_h^n-U^{n-1}_h}{\Delta t}\right\|_{0,\Omega}^2-\left\|\frac{U_h^{n-1}-U^{n-2}_h}{\Delta t}\right\|_{0,\Omega}^2 \right )
+ \frac{ \TGstr \delta}{2} \left\|\frac{U_h^n-U_h^{n-2}}{\Delta t}\right\|_{0,\Omega}^2 \\
& \quad +\frac{1}{\Delta t}\mathcal{A}_h(U_h^n,U_h^n) 
+ \frac{1}{\Delta t}\left (S\sum_{E\in \Omega_h} \|\Pi^{k-1}_E D_x q \|_{0,E}^2-P \right) a_h^x(U_h^n,U_h^n)\\
& \leq \frac{1}{\Delta t}|\mathcal{A}_h(U_h^n,U_h^{n-2})|+ \frac{m_0}{\Delta t} ~|a_h^x\left(U_h^n,U_h^{n-2}\right)  |
+\|g_h^n\|_{0,\Omega}~\left\|\frac{U_h^n-U_h^{n-2}}{\Delta t}\right\|_{0,\Omega}, 
\end{split}
\end{equation*}
where we have bounded $\vert\mathcal{C}_h(q)\vert\le m_0\quad\forall q\in\mathcal{Z}_h.$

Further, using the assumption on $P$ and the third inequality in \eqref{euthdgfk}, we obtain
\begin{equation}
\begin{split}
& \frac{C(\TGstr,\TGstR)}{\Delta t }
\left ( \left\|\frac{U_h^n-U^{n-1}_h}{\Delta t}\right\|_{0,\Omega}^2-\left\|\frac{U_h^{n-1}-U^{n-2}_h}{\Delta t}\right\|_{0,\Omega}^2 \right )
+ \frac{ \TGstr \delta}{2} \left\|\frac{U_h^n-U_h^{n-2}}{\Delta t}\right\|_{0,\Omega}^2 \\
& \quad +\frac{\TAstr}{2~\Delta t} \|U_h^n\|_{2,\Omega}^2+ \frac{1}{\Delta t}~\Big (\frac{\TAstr~\lambda_1}{2}-P \Big )\|D_x U_h^n\|_{0,\O}^2+\frac{1}{\Delta t}\left (S\sum_{E\in \Omega_h} \|\Pi^{k-1}_E D_x q \|_{0,E}^2 \right ) \|D_x U_h^n\|_{0,\Omega}^2 \\
& \leq \frac{\TAstR}{\Delta t} \|U_h^n\|_{2,\Omega} \|U_h^{n-2}\|_{2,\O}+\frac{m_0}{\Delta t} \|D_x U_h^n\|_{0,\O}~\|D_x U_h^{n-2}\|_{0,\O}+\|g_h^n\|_{0,\Omega}~\left\|\frac{U_h^n-U_h^{n-2}}{\Delta t}\right\|_{0,\Omega}.
\label{Ball:new}
\end{split}
\end{equation}
Multiplying by $\Delta t$ on both side of \eqref{Ball:new} and using Young's inequality,
and neglecting the term
$\frac{1}{\Delta t}\left (S\sum_{E\in \Omega_h} \|\Pi^{k-1}_E D_x q \|_{0,E}^2 \right ) \|D_x U_h^n\|_{0,\O}^2$, we derive
\begin{equation}
\begin{split}
C(\TGstr,\TGstR)~\left\|\frac{U_h^n-U^{n-1}_h}{\Delta t}\right\|_{0,\Omega}^2&+ \frac{ \TGstr \delta \Delta t}{4} \left\|\frac{U_h^n-U_h^{n-2}}{\Delta t}\right\|_{0,\Omega}^2+\frac{\TAstr}{4} \|U_h^n\|_{2,\O}^2 \\
& \quad + \Big (\frac{\TAstr~\lambda_1}{2}-P \Big )\|D_x U_h^n\|_{0,\O}^2 \leq C(\TGstr,\TGstR)~\left\|\frac{U_h^{n-1}-U^{n-2}_h}{\Delta t}\right\|_{0,\Omega}^2 \\ &\quad + C(\TAstr,\TAstR)\|U_h^{n-2}\|_{2,\O}^2  +C(m_0,\widetilde{\alpha_{\ast}},\lambda_1)\|D_x U_h^{n-2}\|_{0,\O}^2 \\
& \quad +\Delta t~C(\delta,\TGstr)\|g^n\|^2_{0,\Omega}.
\label{Ball:rearrange} 
\end{split}
\end{equation}
Neglecting the term $\frac{ \TGstr \delta \Delta t}{4} \left\|\frac{U_h^n-U_h^{n-2}}{\Delta t}\right\|_{0,\Omega}^2$
in \eqref{Ball:rearrange} and  rearranging the terms,  we obtain
\begin{equation*}
\begin{split}
\|U_h^n\|_{2,\O}^2 \leq C(\TAstr,\TAstR,m_0) \|U_h^0\|_{2,\O}^2&+ C(\TGstr,\TGstR) \left \|\frac{ U_h^1-U_h^0}{\Delta t} \right \|_{0,\O}\\
& \quad +\Delta t~C(\delta, \TGstr)\|g\|_{L^{\infty}(0,t_{n};L^2(\O))}^2.
\end{split}
\end{equation*}

We define 
\begin{equation}
d:=\left ( C(\TAstr,\TAstR,m_0) \|u_0\|_{2,\O}^2+ C(\TGstr,\TGstR) \|\omega_0\|_{2,\O}^2 +\Delta t~C(\delta, \TGstr)\|g\|_{L^{\infty}(0,t_{n};L^2(\O))}^2 \right )^{1/2}.
\label{d}
\end{equation}
Therefore, using the boundedness property of the interpolation operator $I_h$ in $\|\cdot \|_{2,\O}$ norm,
we conclude that $L_h:\mathcal{B}_d \rightarrow \mathcal{B}_d$,
where the radius $d$ is defined in \eqref{d}.

\paragraph{Continuity of $\boldsymbol{L_h}$:}
Let $\epsilon>0$ be a small number and $q,q_0 \in \mathcal{B}_d$
be two given elements such that $\|q-q_0\|_{2,\Omega}<\bar{\delta}$
where $\bar{\delta}>0$ is a fixed small number could be depend on $\epsilon$. We will show that 
\begin{equation*}
\|q-q_0\|_{2,\Omega}\leq \bar{\delta}\qquad \Longrightarrow\qquad \|U_h^n-p_0\|_{2,\Omega} < \epsilon,
\end{equation*}
 where $U_h^n=L_h(q)$ and $p_0=L_h(q_0)$. Using \eqref{wellposed:proof:2}, we obtain 
\begin{equation}
\begin{split}
&m_h(U_h^n-p_0,v_h)+\frac{\delta \Delta t}{2} m_h(U_h^n-p_0,v_h)+\Delta t^2 \mathcal{A}_h(U_h^n-p_0,v_h)+\Delta t^2 P a_h^x(p_0-U_h^n,v_h)\\
& \quad +\Delta t^2 \Big(S \sum_{E \in \O_h} \int_{E}| \Pi^{k-1}_E D_x  q|^2 a_h^x(U_h^n,v_h)-S \sum_{E \in \O_h} \int_{E}|\Pi^{k-1}_E D_x q_0|^2a_h^x(p_0,v_h) \Big) =0.
\end{split}
\label{wellposed:continuity}
\end{equation}
Adding and subtracting the term $S \sum_{E \in \O_h} \int_{E} |\Pi^{k-1}_E D_x q|^2 a_h^x(p_0,v_h)$
with \eqref{wellposed:continuity}, we obtain
\begin{equation}
\begin{split}
&m_h(U_h^n-p_0,v_h)+\frac{\delta \Delta t}{2}~m_h(U_h^n-p_0,v_h)+\Delta t^2~\mathcal{A}_h(U_h^n-p_0,v_h) 
+\Delta t^2~ \mathcal{C}_h(q)a_h^x(U_h^n-p_0,v_h) \\
& = \Delta t^2 \Big( S \sum_{E \in \O_h} \int_{E}|\Pi^{k-1}_E D_x q_0|^2-S \sum_{E \in \O_h} \int_{E}|\Pi^{k-1}_E D_x q|^2 \Big) a_h^x(p_0,v_h).
\end{split}
\label{wellposed:diff}
\end{equation}
By choosing, $v_h:=U_h^n-p_0$ in \eqref{wellposed:diff}, we obtain
\begin{equation}
\begin{split}
&m_h(U_h^n-p_0,U_h^n-p_0)+\frac{\delta \Delta t}{2}~m_h(U_h^n-p_0,U_h^n-p_0)+\Delta t^2~\mathcal{A}_h(U_h^n-p_0,U_h^n-p_0) \\
&+\Delta t^2~ \mathcal{C}_h(q)a_h^x(U_h^n-p_0,U_h^n-p_0) \\
&= \Delta t^2~\Big( S \sum_{E \in \O_h}\int_{E}|\Pi^{k-1}_E D_x q_0|^2-S \sum_{E \in \O_h}\int_{E}|\Pi^{k-1}_E D_x q|^2 \Big) a_h^x(p_0,U_h^n-p_0).
\end{split}
\label{wellposed:continuity:new}
\end{equation}

Further, using the boundedness of the projection operator $\Pi^{k-1}$
and using that $q,q_0 \in \mathcal{B}_{d}$, we rewrite the difference on the right hand side as follows
\begin{equation}
\begin{split}
&\Big( S \sum_{E \in \O_h} \int_{E}|\Pi^{k-1}_E D_x q_0|^2-S \sum_{E \in \O_h} \int_{E}|\Pi^{k-1}_E D_x q|^2 \Big)\\ 
& = S \sum_{E \in \O_h}\int_{E} (|\Pi^{k-1}_E D_x q_0|+|\Pi^{k-1}_E D_x q|)(|\Pi^{k-1}_E D_x q_0|-|\Pi^{k-1}_E D_x q|)\\
& \leq 2Sd~ \sum_{E\in \O_h}\| \Pi^{k-1}_E D_x(q_0-q)\|_{0,E} \\
& \leq 2Sd~|q_0-q|_{1,\Omega}. \qquad \qquad \qquad \qquad \qquad \qquad\text{(Recollecting  \eqref{Lipschitz:nonlocal})}
\end{split}
\label{wellposed:nonlinear:diff}
\end{equation}

By applying the stability of $m_h(\cdot,\cdot)$, $\mathcal{A}_h(\cdot,\cdot)$
(cf. Lemma~\eqref{stability}) continuity of 
 $a_h^x(\cdot,\cdot)$ (cf. \eqref{continuous:ax})
in \eqref{wellposed:continuity:new},
and  boundedness of $p_0$ such as $\|p_0\|_{2,\Omega}\leq d$,
and using the third inequality in \eqref{euthdgfk}, we obtain
\begin{equation}
\begin{split}
 \TGstr \|U_h^n-p_0\|_{0,\Omega}^2+ \frac{\delta \Delta t}{2} \TGstr \|U_h^n-p_0\|_{0,\Omega}^2&
 +\frac{\Delta t^2~\TAstr}{2}~\|U_h^n-p_0\|_{2,\Omega}^2+\Delta t^2\Big( \frac{\TAstr \lambda_1}{2}-P \Big) \|D_x(U_h^n-p_0)\|_{0,\Omega}^2 \\
 & \quad+ \Delta t^2~\Big( S \sum_{E \in \Omega_h} \|\Pi^{k-1}_E D_x u_h\|_{0,E}^2 \Big)~\|D_x(U_h^n-p_0)\|_{0,\Omega}^2 \\
 & \quad \leq 
  2Sd~ \Delta t^2 \|D_x p_0\|_{0,\Omega}~\|q_0-q\|_{2,\Omega}~\|D_x(U_h^n-p_0)\|_{0,\Omega}. 
\end{split}
\label{wellposedness:inequality}
\end{equation}

Neglecting the term $ \Big( S \sum_{E \in \Omega_h} \|\Pi^{k-1}_E D_x u_h\|_{0,E}^2 \Big)~\|D_x(U_h^n-p_0)\|_{0,\Omega}^2$
on the left-side of \eqref{wellposedness:inequality}, we obtain
\begin{equation*}
\begin{split}
\TGstr \|U_h^n-p_0\|_{0,\Omega}^2+ \frac{\delta \Delta t}{2} \TGstr \|U_h^n-p_0\|_{0,\Omega}^2&
 +\frac{\Delta t^2~\TAstr}{2}~\|U_h^n-p_0\|_{2,\Omega}^2+\Delta t^2~\Big( \frac{\TAstr \lambda_1}{2}-P \Big) \|D_x(U_h^n-p_0)\|_{0,\Omega}^2 \\
 & \quad \leq 
  2Sd~ \Delta t^2 \|D_xp_0\|_{0,\Omega}~\|q_0-q\|_{2,\Omega}~\|D_x(U_h^n-p_0)\|_{0,\Omega}. 
\end{split}
\end{equation*}

Further, using Young's inequality and kick-back arguments, we obtain
\begin{equation*}
\begin{split}
\TGstr \|U_h^n-p_0\|_{0,\Omega}^2+& \frac{\delta \Delta t}{2} \TGstr \|U_h^n-p_0\|_{0,\Omega}^2+\frac{\Delta t^2~\TAstr}{2}~\|U_h^n-p_0\|_{2,\Omega}^2 +\Delta t^2~\Big( \frac{\TAstr \lambda_1-2P}{4} \Big)\\
& \quad \times \|D_x(U_h^n-p_0)\|_{0,\Omega}^2 \leq 
  \frac{(8~S^2~ d^4)}{( \TAstr \lambda_1-2P )}~ \Delta t^2 ~\|q_0-q\|_{2,\Omega}^2.
\end{split}
\end{equation*}

Since the coefficients of $\|U_h^n-p_0\|_{0,\Omega}$ and $\|D_x(U_h^n-p_0)\|_{0,\Omega}$ are positive, hence neglecting the terms, we obtain
\begin{equation*}
\|U_h^n-p_0\|_{2,\Omega}^2 \leq \frac{(16~S^2~d^4)}{( \TAstr \lambda_1-2P )~\TAstr} \|q_0-q\|_{2,\Omega}^2,
\end{equation*} 
which implies that the mapping $L_h$ is continuous.

Hence from Schauder's fixed point theorem (cf. Lemma~\ref{schauder}),
we can conclude that $L_h$ has a fixed point, i.e, $q=L_h(q)$
which implies that there a solution of the nonlinear equation \eqref{full:discrete:model}.

\paragraph{Uniqueness of the solution:}
Let $U^1_n$ and $U^2_n$ be two numerical solutions of \eqref{full:discrete:model}.
Then from \eqref{full:discrete:model}, we obtain 
\begin{equation}
\begin{split}
&m_h(U_n^1-U_n^2,v_h)+\frac{\delta \Delta t}{2} m_h(U_n^1-U_n^2,v_h)+\Delta t^2 \mathcal{A}_h(U_n^1-U_n^2,v_h)+\Delta t^2 P a_h^x(U_n^2-U_n^1,v_h)\\
& \quad +\Delta t^2 \Big(S \sum_{E \in \O_h} \int_{E}|\Pi^{k-1}_E D_x U_n^1|^2 a_h^x(U_n^1,v_h)-S \sum_{E \in \O_h} \int_{E}|\Pi^{k-1}_E D_x U_n^2|^2a_h^x(U_n^2,v_h) \Big)=0.
\end{split}
\label{wellposed:uniqueness}
\end{equation}

By choosing $v_h:=U_n^1-U_n^2$ in \eqref{wellposed:uniqueness}, we obtain
\begin{equation*}
\begin{split}
&m_h(U_n^1-U_n^2,U_n^1-U_n^2)+\frac{\delta \Delta t}{2} m_h(U_n^1-U_n^2,U_n^1-U_n^2)+\Delta t^2 \mathcal{A}_h(U_n^1-U_n^2,U_n^1-U_n^2)\\
& \quad +\Delta t^2 \Big(S \sum_{E \in \O_h} \int_{E}|\Pi^{k-1}_E D_x  U_n^1|^2 a_h^x(U_n^1,U_n^1-U_n^2)-S \sum_{E \in \O_h} \int_{E}|\Pi^{k-1}_E D_x U_n^2|^2a_h^x(U_n^2,U_n^1-U_n^2) \Big)\\
& \quad - P \Delta t^2 a_h^x(U_n^1-U_n^2,U_n^1-U_n^2)=0.
\end{split}
\end{equation*}

Further, an application of stability property of $m_h(\cdot,\cdot)$, and $\mathcal{A}_h(\cdot,\cdot)$,
and continuity of $a_h^x(\cdot,\cdot)$ (cf. Lemma~\ref{stability}) and following
same arguments as \eqref{wellposed:nonlinear:diff}, we obtain
\begin{equation*}
\begin{split}
\TGstr \|U_n^1-U_n^2\|_{0,\Omega}^2 &+ \frac{\delta \Delta t}{2} \TGstr\|U_n^1-U_n^2\|_{0,\Omega}^2
+ \Delta t^2 \TAstr \lambda_1\|D_x(U_n^1-U_n^2)\|_{0,\Omega}^2 \\
&+\Delta t^2\Big(S~\sum_{E \in \O_h} \int_E |\Pi^{k-1}_E D_x U_n^2|^2-P \Big) \sum_{E \in \O_h} \Vert\Pi^{k-1}_E D_x(U_n^1-U_n^2)\Vert_{0,E}^2 \\
& \leq 2Sd~\Delta t^2~\| D_x(U_n^1- U_n^2)\|^2_{0,\Omega}.
\end{split}
\end{equation*}

Further, using the third inequality in \eqref{euthdgfk}
and the assumptions of Theorem~\ref{wellposed:full:discrete} on $P$, we derive
\begin{equation}
\begin{split}
\TGstr \|U_n^1-U_n^2\|_{0,\Omega}^2 &+ \frac{\delta \Delta t}{2} \TGstr\|U_n^1-U_n^2\|_{0,\Omega}^2
+ \Delta t^2 \Big( \TAstr \lambda_1-P \Big)\|D_x(U_n^1-U_n^2)\|_{0,\Omega}^2 \\
&+\Delta t^2\Big(S~\sum_{E \in \O_h} \int_E |\Pi^{k-1}_E D_x U_n^2|^2 \Big) \sum_{E \in \O_h} \Vert\Pi^{k-1}_E D_x(U_n^1-U_n^2)\Vert_{0,E}^2 \\
& \leq 2Sd~\Delta t^2~\| D_x(U_n^1- U_n^2)\|^2_{0,\Omega}.
\end{split}
\label{wellposed:unique:mod}
\end{equation}

Since the term $\Big( \Delta t^2~S~\sum_{E \in \O_h} \int_E |\Pi^{k-1}_E D_x U_n^2|^2 ~ \sum_{E \in \O_h}\Vert\Pi^{k-1}_E D_x(U_n^1-U_n^2)\Vert_{0,E}^2\Big)$
is positive, neglecting from left-hand side of \eqref{wellposed:unique:mod}, we obtain
\begin{equation*}
\begin{split}
\TGstr \|U_n^1-U_n^2\|_{0,\Omega}^2 &+ \frac{\delta \Delta t}{2} \TGstr\|U_n^1-U_n^2\|_{0,\Omega}^2
+ \Delta t^2 \Big( \TAstr \lambda_1-P \Big)\|D_x(U_n^1-U_n^2)\|_{0,\Omega}^2 \\
& \leq 2Sd~\Delta t^2~\|D_x( U_n^1- U_n^2)\|^2_{0,\Omega}.
\end{split}
\end{equation*}

A straightforward mathematical evaluation implies that
\begin{equation*}
\begin{split}
\frac{\TGstr}{2~d} \|U_n^1-U_n^2\|_{0,\Omega}^2 &+ \frac{\delta \Delta t}{4~d} \TGstr\|U_n^1-U_n^2\|_{0,\Omega}^2
+ \Delta t^2 \Big( \frac{\TAstr \lambda_1-P}{2~d}-S \Big)\|D_x(U_n^1-U_n^2)\|_{0,\Omega}^2  \leq 0.
\end{split}
\end{equation*}

Further, the assumption of Theorem~\ref{wellposed:full:discrete} on $S$ implies that the
term $\Big( \frac{\TAstr \lambda_1-P}{2~d}-S \Big)>0$, and hence neglecting the term
corresponding to $\|D_x(U_n^1-U_n^2)\|_{0,\Omega}$ and $\|U_n^1-U_n^2\|_0$, we obtain
 \begin{equation*}
\|U_n^1-U_n^2\|_{0,\Omega}^2 \leq 0,
\end{equation*}
which implies $U_n^1=U_n^2$.
\end{proof}

\begin{remark}
In the proof of Theorem~\ref{wellposed:full:discrete}, we have proved
that the fully-discrete scheme \eqref{full:discrete:model}-\eqref{full:discrete:initial}
has unique solution based on some assumptions on $P$ and $S$ which are explicitly
stated as  $P \in (0, \frac{\TAstr ~\lambda_1}{2})$ and $S \in (0, \frac{\TAstr \lambda_1-P}{2d})$.
In comparison with the wellposedness of continuous weak formulation (Theorem~\ref{hgfvcxds}),
we have considered analogous assumption on $P$. The positive constant $\TAstr$
appeared due to discrete virtual element formulation. In addition, the fully-discrete
scheme posses unique solution for sufficiently small values of $S$. Further, we would
like to highlight that the proof of wellposedness of fully-discrete scheme is
independent of small values of time-step, i.e., $\Delta t$ and could be completely controlled by initial data.
\end{remark}

\subsection{Implementation of fully-discrete scheme}
\label{plate_implement}

In this section, we discuss the implementation procedure of the fully-discrete scheme.
By employing fully-implicit scheme in time variable and $C^1$-VEM in space variable,
the fully-discrete scheme \eqref{full:discrete:model} reduces to system of
nonlinear equations for each time-steps $t_n$, $0\leq n \leq N$. The nonlinear
system can be solved by employing any iteration technique such as Picard's
iteration technique or Newton's method. Since the Newton's method converges
with higher order ({\it rate of convergence is 2}) compared to Picard's
iteration technique ({\it rate of convergence is 1}), we will utilize
Newton's method to solve the nonlinear system~\eqref{full:discrete:model}.
However, the primary difficulty with Newton's methods is that the computation
of Jacobian which needs to be updated at each time-step. Further, the presence
of nonlocal term $\mathcal{C}_h(U_h^n)a_h^x(U_h^n,v_h)$
disfigures the sparse structure of the Jacobian. Consequently, the computational
cost is increased. To avoid this difficulty, we introduce a new independent
variable and maintain the sparse structure of the Jacobian. Let $\{ \phi_i \}_{1\leq i \leq N^{\dof}}$
be the global basis of $\mathcal{Z}_h$ associated with the DoFs $\boldsymbol{\chi}$ of $\O_h$.
By applying the $\delta_{ij}$ property of the basis function $\phi_i$ of $\mathcal{Z}_h$,
we rewrite the discrete solution as
\begin{equation}
U_h^n=\sum_{j=1}^{N^{\dof}} \eta^n_j~\phi_j,
\label{discrete:sol:explicit}
\end{equation}
where $\eta^n_j:=\chi_j(U_h^n)$ is the coefficient of the basis function $\phi_j$.
Collecting all coefficient $\eta^n_j$, we constitute the coefficient vector say
$\Eta^n:=[\eta^n_1,\eta^n_2,\ldots,\eta^n_{N^{\dof}} ]^{T}$.
Using \eqref{discrete:sol:explicit}, we rewrite \eqref{full:discrete:model}
into system of algebraic equation as follows.
\begin{equation*}
\mathcal{E}_i(\Eta^n)=0 \quad 1\leq i \leq N^{\dof},
\end{equation*}
where
\begin{equation}
\begin{split}
\mathcal{E}_i(\Eta^n)&=m_h(U^{n}_h,\phi_i)+\frac{\Delta t~\delta}{2}m_h(U_h^{n},\phi_i)+\Delta t^2 \mathcal{A}_h(U^{n}_h,\phi_i) \\
&+ \Delta t^2 \mathcal{C}_h(U^{n}_h)a_h^x(U^n_h,\phi_i)-\Delta t^2~(g_h^{n},v_h)_h\\
&-2m_h(U_h^{n-1},\phi_i)+m_h(U_h^{n-2},\phi_i)-\frac{\Delta t~\delta}{2} m_h(U_h^{n-2},\phi_i). 
\end{split}
\label{algebraic:nonlocal}
\end{equation}

Each entries of the Jacobian matrix $\bold{J}$ is formulated as below
\begin{equation}
\begin{split}
(\bold{J})_{ij}:=\frac{\partial \mathcal{E}_i(\Eta^n) }{\partial \eta^n_j}& =m_h(\phi_j,\phi_i)+\frac{\Delta t~\delta}{2} m_h(\phi_j,\phi_i)+\Delta t^2 \mathcal{A}_h(\phi_j,\phi_i) \\
& \quad + 2~\Delta t^2~\Big [S \sum_{E \in \O_h} \int_{E} \Pi^{k-1}_E D_x U_h^n ~\Pi^{k-1}_E D_x  \phi_j \Big ]a_h^x(U_h^n,\phi_i) \\
& \quad + \Delta t^2~\mathcal{C}_h(U^{n}_h)a_h^x(\phi_j,\phi_i).
\end{split}
\label{jacobian}
\end{equation}

It can be observed that the Jacobian $\bold{J}$ is full matrix,
hence numerically expensive to implement. To avoid this difficulty,
we exploit the idea by extending the independent variable which is
presented in \cite{G12}. By doing this, we rewrite the system as follows:
Find $(U_h^n, \xi) \in \mathcal{Z}_h \times \mathbb{R}$ such that
\begin{equation}
\begin{split}
&\mathcal{E}_i(U_h^n,\xi):=m_h(U^{n}_h,\phi_i)+\frac{\Delta t~\delta}{2}m_h(U_h^{n},\phi_i)+\Delta t^2 \mathcal{A}_h(U^{n}_h,\phi_i) \\
&+ \Delta t^2 \Big [ S \xi-P\Big]a_h^x(U^n_h,\phi_i)-\Delta t^2~(g_h^{n},v_h)_h\\
&-2m_h(U_h^{n-1},\phi_i)+m_h(U_h^{n-2},\phi_i)-\frac{\Delta t~\delta}{2} m_h(U_h^{n-2},\phi_i), \quad 1\leq i \leq N^{\dof} \\
&\mathcal{E}_{N^{\dof}+1}(U_h^n,\xi):= \sum_{E \in \O_h}\int_{E}|\Pi^{k-1}_E D_x  U_h^n|^2-\xi.
\end{split}
\label{new:nonlinear_system}
\end{equation}
The Jacobian of the system \eqref{new:nonlinear_system} is given by
\begin{equation*}
\widetilde{\bold{J}}:=
\begin{bmatrix}
& \bold{J}_1 & \bold{J}_2 \\
& \bold{J}_3 & \bold{J}_4
\end{bmatrix}_{(N^{\dof}+1)\times(N^{\dof}+1)}. 
\end{equation*}
$\textbf{J}_1$ is the jacobian corresponding to the system of linear equations
\begin{equation*}
\mathcal{E}_i(U^n_h,\xi)=0 \quad  1\leq i \leq N^{\dof}, 
\end{equation*}
which is given by
\begin{equation*}
\begin{split}
(\textbf{J}_1)_{i,j}:=\frac{\partial \mathcal{E}_i(U_h^n,\xi) }{\partial \eta^n_j}&=m_h(\phi_j,\phi_i)+\frac{\Delta t~\delta}{2} m_h(\phi_j,\phi_i)+\Delta t^2 \mathcal{A}_h(\phi_j,\phi_i) \\
& \quad +\Delta t^2 (S \xi-P) a_h^x(\phi_j,\phi_i), \quad 1\leq i,j \leq N^{\dof}.
\end{split}
\end{equation*}
$\textbf{J}_2$ and $\textbf{J}_3^{T}$ are the column vectors which are defined as follows: 
\begin{equation*}
\begin{split}
(\textbf{J}_2)_{i,1}&:=\frac{\partial \mathcal{E}_i(U_h^n,\xi) }{\partial \xi} \\
&=\Delta t^2~S~a_h^x(U_h^n,\phi_i) \quad 1\leq i \leq N^{\dof},
\end{split}
\end{equation*}
and
\begin{equation*}
\begin{split}
(\textbf{J}_3)_{1,j}&:=\frac{\partial \mathcal{E}_{N^{\dof}+1}(U_h^n,\xi) }{\partial \eta^n_j} \\
&=2 \sum_{E \in \O_h}\int_{E} \Pi^{k-1}_E D_x U_h^n~ \Pi^{k-1}_E D_x \phi_j, \quad 1\leq j \leq N^{\dof}.
\end{split}
\end{equation*}
Further, the matrix with single entry $\textbf{J}_4$ can be expressed as
\begin{equation*}
(\textbf{J}_4)_{1\times 1}:= \frac{\partial \mathcal{E}_{N^{\dof}+1}(U_h^n,\xi) }{\partial \xi}=-1.
\end{equation*}

In continuation, we would like to highlight that in the discrete formulation,
we have considered nonlinearity of the polynomial part of $U_h^n$ only
such as $\Big[ \sum_{E \in \O_h}\int_{E} | \Pi^{k-1}_E D_x U_h^n|^2 {\rm d} E \Big]$.
In the next section, we will prove that the proposed approximation ensures optimal
rate of convergence.
The following result, which proof follows standard arguments, shows
that \eqref{algebraic:nonlocal} and \eqref{new:nonlinear_system} are equivalent.
\begin{theorem}
Let us assume that Assumption~\ref{mesh:regularity} holds and $U_h^n \in \mathcal{Z}_h$
be the solution of \eqref{algebraic:nonlocal}, then the pair $(U_h^n, \xi)\in \mathcal{Z}_h \times \mathbb{R}$
be the solution of \eqref{new:nonlinear_system}. Conversely, let $(U_h^n,\xi)\in\mathcal{Z}_h \times \mathbb{R}$
satisfies \eqref{new:nonlinear_system}, then $U_h^n \in \mathcal{Z}_h$ satisfies \eqref{algebraic:nonlocal}.
\end{theorem}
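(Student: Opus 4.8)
The plan is to exploit the fact that the two systems differ only in how the nonlocal coefficient is represented: in \eqref{algebraic:nonlocal} it enters as $\mathcal{C}_h(U_h^n)=S\sum_{E\in\O_h}\|\Pi^{k-1}_E D_x U_h^n\|_{0,E}^2-P$, whereas in \eqref{new:nonlinear_system} it is replaced by the affine expression $S\xi-P$ supplemented by the scalar constraint $\mathcal{E}_{N^{\dof}+1}(U_h^n,\xi)=\sum_{E\in\O_h}\int_E|\Pi^{k-1}_E D_x U_h^n|^2-\xi=0$. All remaining contributions — the mass terms $m_h(\cdot,\phi_i)$, the biharmonic term $\Delta t^2\mathcal{A}_h(\cdot,\phi_i)$, the load $\Delta t^2(g_h^n,\phi_i)_h$, and the history terms built from $U_h^{n-1}$ and $U_h^{n-2}$ — are literally identical in the two formulations and depend neither on $\xi$ nor on the nonlinear coefficient. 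Hence the whole argument reduces to the observation that the constraint equation forces $S\xi-P=\mathcal{C}_h(U_h^n)$, using the defining identity $\|\Pi^{k-1}_E D_x U_h^n\|_{0,E}^2=\int_E|\Pi^{k-1}_E D_x U_h^n|^2$.

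For the forward implication I would take a solution $U_h^n\in\mathcal{Z}_h$ of \eqref{algebraic:nonlocal} and simply set $\xi:=\sum_{E\in\O_h}\int_E|\Pi^{k-1}_E D_x U_h^n|^2$. With this choice the last equation $\mathcal{E}_{N^{\dof}+1}(U_h^n,\xi)=0$ holds by construction, and the identity above yields $S\xi-P=\mathcal{C}_h(U_h^n)$. Substituting this into the first $N^{\dof}$ equations of \eqref{new:nonlinear_system} makes them coincide term by term with \eqref{algebraic:nonlocal}, so the pair $(U_h^n,\xi)$ solves \eqref{new:nonlinear_system}.

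For the converse I would start from a solution $(U_h^n,\xi)\in\mathcal{Z}_h\times\mathbb{R}$ of \eqref{new:nonlinear_system}. The $(N^{\dof}+1)$-th equation immediately gives $\xi=\sum_{E\in\O_h}\int_E|\Pi^{k-1}_E D_x U_h^n|^2$, hence again $S\xi-P=\mathcal{C}_h(U_h^n)$; inserting this back into the first $N^{\dof}$ equations recovers exactly \eqref{algebraic:nonlocal}, so $U_h^n$ solves that system. I do not anticipate any genuine analytic obstacle: the statement is a purely algebraic equivalence, and the only point needing care is to record that the constraint equation determines the single extra degree of freedom $\xi$ as a function of $U_h^n$, so that augmenting the unknowns by $\xi$ neither loses nor creates solutions.
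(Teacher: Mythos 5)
Your proposal is correct and is precisely the ``standard argument'' the paper alludes to while omitting the proof: defining $\xi$ by the constraint equation makes the coefficient $S\xi-P$ coincide with $\mathcal{C}_h(U_h^n)$, so the two systems are term-by-term identical in both directions. Nothing is missing.
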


\subsection{Linearized scheme}
\label{section:linearized}
In the previous subsection, we have highlighted that the presence of nonlocal
term increases the computational cost of \eqref{full:discrete:model}.
To avoid this shortcoming, we propose a linearized scheme without compromising
the rate of convergence as follows: for $n=2,\ldots,N$,
find $\widetilde{U^{n}_h}\in \mathcal{Z}_h$ such that   
\begin{align}
&m_h\left(\frac{\widetilde{U_h^{n}}
-2\widetilde{U_h^{n-1}}+\widetilde{U_h^{n-2}}}{\Delta t^2},v_h\right)+\delta~ m_h\left(\frac{\widetilde{U_h^{n}}-\widetilde{U_h^{n-2}}}{2 \Delta t},v_h\right) +  \calA_h(\widetilde{U_h^{n}},v_h) \nonumber \\
&\quad+\mathcal{C}(\widetilde{U_h^{n-2}})a_h^x(\widetilde{U_h^{n}},v_h)=  (g_h^{n},v_h)_h,\label{linear:discrete:model} \\
&\widetilde{U^0_h}:=I_h u_0, \qquad \text{and}~ \qquad \widetilde{U^1_h}:=\Delta tI_h \omega_0 +\widetilde{U^0_h}. \label{linear:discrete:initial} 
\end{align}
\eqref{linear:discrete:model} has the same matrix structure as
linear system of equation excluding the matrix $\bold{A}^x$
multiplied by a constant
$\mathcal{C}(\widetilde{U_h^{n-2}}):=\Big[ S \sum_{E \in \O_h}\int_{E} |\Pi^{k-1}_E D_x \widetilde{U_h^{n-2}}|^2-P\Big]$.
Recollecting the matrix representation of the bilinear forms  $m_h(\cdot,\cdot)$
and $\mathcal{A}_h(\cdot,\cdot)$ from the Section~\ref{semi:discrete:sec},
we rewrite~\eqref{linear:discrete:model}-\eqref{linear:discrete:initial} as follows:
\begin{align}
&\Big(\Big(1+ \frac{\delta~\Delta t}{2}\Big)\textbf{M}+\Delta t^2\textbf{A}+C~\Delta t^2\textbf{A}^x \Big) \widetilde{\Eta}^{n}
=\widetilde{G}(\widetilde{\Eta}^{n-1},\widetilde{\Eta}^{n-2}) \label{linear:matrix}\\
&\widetilde{\Eta}(0)=\widetilde{\Eta}_0 \label{linear:displacement}\\
&\frac{d \widetilde{\Eta}}{d t}=\boldsymbol{\omega}_0. \label{linear:velocity}
\end{align}   
Since the matrix $\bold{M}$ is positive definite and the matrices $\bold{A}$
and $\bold{A}^x$ are positive semi-definite, $\Big(\Big(1+ \frac{\delta~\Delta t}{2}\Big)\textbf{M}+\Delta t^2\textbf{A}+C~\Delta t^2\textbf{A}^x \Big)$ is invertible,
hence the algebraic system \eqref{linear:matrix}-\eqref{linear:velocity} has a unique solution.

\section{Convergence analysis of the discrete scheme}
\label{convergence:VEM}

In this section, we will derive the \textit{a priori} error estimates for semi-discrete,
fully-discrete and linearized schemes. We define the projection operator
(Ritz's projection) $R_h:H^2_{\ast}(\Omega) \rightarrow \mathcal{Z}_h$ such that
\begin{equation}
\calA_h(R_h u, v_h)=\calA(u,v_h) \quad \forall v_h \in \mathcal{Z}_h
\label{ritz:projection}
\end{equation}
Following \cite{AMNS-Submitted2020,VB15}, it can be prove that the discrete bilinear
form $\calA_h(\cdot,\cdot)$ is coercive and for any function $\omega\in H^2_{\ast}(\O)$,
$\calA(\omega,\cdot)$ is continuous on $\mathcal{Z}_h$.
By using the Lax-Milgram Theorem, we can conclude that \eqref{ritz:projection} has unique solution.
The existence and uniqueness of $R_h u$ directly follows from the fact that $R_h u$
is the solution of the variational problem~\eqref{ritz:projection}.
It can be perceived that employing the projection operator $R_h$,
we can bound the error $u-u_h$ easily. In this direction, we split the error as below.
\begin{equation}
u-u_h=u-R_hu+R_hu-u_h:=\varphi-\psi.
\label{splitting}
\end{equation}
By using the approximation property of  $R_h$ (Lemma~\ref{lemma:engy:pro}), we can bound $\varphi$.
To estimate $u-u_h$, we focus to estimate $\psi$. In this direction, we explore polynomial approximation
and interpolation operator properties on discrete space $\mathcal{Z}_h$
\cite{BS-2008,ABSV2016}.

\begin{proposition}
\label{app1}
(\textbf{Polynomial Approximation})
Assume that the mesh regularity assumption~\ref{mesh:regularity} is satisfied.
Then there exists a constant $C>0$ independent of mesh-size $h$
but depends on the mesh regularity parameter $\gamma$ 
such that for every $z\in H^{\delta}(E)$ there exists
$z_{\pi}\in\P_k(E)$, $k\in\mathbb{N}$ such that
\begin{equation*}
\vert z-z_{\pi}\vert_{\ell,E}\leq C h_E^{\delta-\ell}|z|_{\delta,E}\quad 0\leq\delta\leq
k+1, \ell=0,\ldots,[\delta],
\end{equation*}
with $[\delta]$ denoting largest integer equal or smaller than $\delta \in {\mathbb R}$.
\end{proposition}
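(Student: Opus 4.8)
The plan is to take $z_\pi$ to be the averaged Taylor polynomial of degree $k$ and to invoke the classical Bramble--Hilbert/Dupont--Scott approximation theory, whose constants are governed precisely by the geometric quantity provided by Assumption~\ref{mesh:regularity}. First I would use the mesh regularity hypothesis to fix, for each element $E\in\O_h$, a ball $B_E\subset E$ of radius $\rho_E\ge\gamma h_E$ with respect to which $E$ is star-shaped. The chunkiness parameter $h_E/\rho_E$ is then bounded by $1/\gamma$, uniformly in $E$ and in the mesh-size $h$.

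For integer $\delta$, I would set $z_\pi:=Q^k z$, the Taylor polynomial of order $k$ averaged over $B_E$ (cf.\ \cite{BS-2008}). Since $Q^k$ reproduces polynomials of degree $\le k$ and $\delta\le k+1$, the Bramble--Hilbert lemma on star-shaped domains yields
\[
|z-Q^k z|_{\ell,E}\le C\,h_E^{\delta-\ell}\,|z|_{\delta,E},\qquad \ell=0,\dots,\delta,
\]
with $C$ depending only on $k$, the spatial dimension and the chunkiness parameter. To make the $h_E$-scaling explicit I would dilate $E$ isotropically to a configuration $\widehat E$ of unit diameter: the pure Bramble--Hilbert estimate holds on $\widehat E$ with a constant depending only on the (scale-invariant) chunkiness parameter, and scaling back reinstates the factor $h_E^{\delta-\ell}$ because the $\ell$-th seminorm transforms as $|v|_{\ell,E}=h_E^{d/2-\ell}|\widehat v|_{\ell,\widehat E}$. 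After the previous step this constant depends on the mesh only through $\gamma$.

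For fractional $\delta\in(0,k+1]$ I would interpolate: the integer endpoints $\delta=[\delta]$ and $\delta=[\delta]+1$ furnish bounds for the bounded linear operator $z\mapsto z-Q^k z$, and real interpolation (using the characterization of $H^\delta$ as an interpolation space between consecutive integer-order Sobolev spaces) delivers the estimate at the intermediate exponent with $z_\pi$ still the averaged Taylor polynomial. Since interpolation preserves the scale-invariant structure of the constants, the resulting $C$ again depends only on $\gamma$, $k$ and $d$. Alternatively one may invoke a fractional Bramble--Hilbert argument directly.

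The main obstacle I anticipate is the uniformity of $C$ over the entire mesh family and its independence of $h$. Because the polygons $E$ are not affine images of a single reference element, one cannot reduce to a fixed reference configuration; instead one must track how the Bramble--Hilbert constant depends on the chunkiness parameter and verify that this dependence stays bounded under Assumption~\ref{mesh:regularity}. Establishing that the Dupont--Scott constant is controlled purely by the chunkiness parameter (rather than by finer features of the individual polygon) is the technical heart of the argument; the fractional case then adds only the routine justification of the interpolation step.
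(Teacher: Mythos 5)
Your proposal is correct and coincides with the standard argument behind this result: the paper does not prove Proposition~\ref{app1} but quotes it from \cite{BS-2008,ABSV2016}, and the proof in those references is exactly your route — averaged Taylor polynomial over the ball guaranteed by Assumption~\ref{mesh:regularity}, the Bramble--Hilbert/Dupont--Scott lemma with constant controlled by the chunkiness parameter $h_E/\rho_E\le 1/\gamma$, a scaling argument for the powers of $h_E$, and interpolation for fractional $\delta$.
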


\begin{proposition}\label{app2}
(\textbf{Interpolation Approximation}) Under the assumption of Proposition~\ref{app1} and 
for all $z\in H^s(E)$ there exists $I^E_h(z)\in \ZK$ and $C>0$
independent of $h$ such that
\begin{equation*}
||z-I^E_h(z) ||_{\ell,E}\leq C h_E^{s-\ell}|z|_{s,E},\quad \ell=0,1,2, \quad 2\leq s\leq k+1,
\end{equation*}
where $C$ is independent of mesh-size $h$ but depends on
mesh regularity parameter $\gamma$ (assumption~\ref{mesh:regularity}).
\end{proposition}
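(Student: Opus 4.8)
The plan is to prove the estimate by the classical two-step strategy: a polynomial-reproduction (Bramble--Hilbert) argument that reduces matters to bounding the interpolant of a small residual, followed by a scaled stability bound for $I_h^E$. Recall that $I_h^E z$ is the unique element of $\ZK$ whose degrees of freedom $D1$--$D5$ coincide with those of $z$. Since $\P_k(E)\subset\ZK$ (a degree-$k$ polynomial satisfies the edge-polynomiality conditions, has biharmonic in $\P_k(E)$, and meets the moment constraints in \eqref{intended_space:VEM} because $\PiK q=q$) and $I_h^E$ is the identity on $\ZK$ by unisolvence of $D1$--$D5$, the operator reproduces polynomials: $I_h^E q=q$ for every $q\in\P_k(E)$. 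Letting $z_\pi\in\P_k(E)$ be the approximant of Proposition~\ref{app1}, I would write $z-I_h^E z=(z-z_\pi)-I_h^E(z-z_\pi)$ and estimate $\|z-I_h^E z\|_{\ell,E}\le\|z-z_\pi\|_{\ell,E}+\|I_h^E(z-z_\pi)\|_{\ell,E}$; the first summand is handled verbatim by Proposition~\ref{app1}.

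Next I would transfer the residual term to the reference configuration $\widehat E$ obtained by the dilation $\x\mapsto(\x-\x_E)/h_E$. Under Assumption~\ref{mesh:regularity}, $\widehat E$ is star-shaped with respect to a ball of radius bounded below by $\gamma$ and has unit-order diameter, so the constants produced on $\widehat E$ depend only on $\gamma$ and $k$. The degrees of freedom $D1$--$D5$ have been normalised (the factors $h_\Xi$, $1/h_e$, $1/h_E^2$) precisely so that they are invariant under this dilation; hence interpolation commutes with scaling, $\widehat{I_h^E w}=\widehat I\,\widehat w$, and the usual two-dimensional scaling $|v|_{m,E}=h_E^{1-m}|\widehat v|_{m,\widehat E}$ applies. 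Everything then hinges on the reference stability estimate $|\widehat I\,\widehat w|_{\ell,\widehat E}\le C\|\widehat w\|_{2,\widehat E}$ for $\ell=0,1,2$.

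This stability bound is the step I expect to be the main obstacle, since the virtual basis functions on $\widehat E$ are not known explicitly. I would argue that each normalised functional $\widehat\chi_i$ (vertex values, vertex gradients, edge moments, cell moments) is continuous on $H^2(\widehat E)$ --- the edge and vertex functionals through the trace theorem, the interior moments trivially --- so that $|\widehat\chi_i(\widehat w)|\le C\|\widehat w\|_{2,\widehat E}$. Expanding $\widehat I\,\widehat w=\sum_i\widehat\chi_i(\widehat w)\,\widehat\varphi_i$ in the canonical basis $\{\widehat\varphi_i\}$ of the finite-dimensional space $\widehat{\ZK}$ and using that the fixed quantities $|\widehat\varphi_i|_{\ell,\widehat E}$ are bounded, I obtain $|\widehat I\,\widehat w|_{\ell,\widehat E}\le C\sum_i|\widehat\chi_i(\widehat w)|\le C\|\widehat w\|_{2,\widehat E}$. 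The genuinely delicate point is that in two dimensions $H^2(\widehat E)$ does not embed into $C^1(\overline{\widehat E})$, so the vertex-gradient functionals $D2$ are only borderline well defined at the endpoint $s=2$; I would dispose of this by proving the estimate first for $s>2$ and recovering $s=2$ by density, relying on the same trace-continuity properties exploited for the $C^1$ spaces in \cite{BM13,ABSV2016}.

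Finally, I would assemble the bound. Applying the reference stability estimate to $\widehat w=\widehat{z-z_\pi}$, scaling back to $E$, and invoking Proposition~\ref{app1} for each seminorm $|z-z_\pi|_{m,E}\le C h_E^{s-m}|z|_{s,E}$, every contribution collapses to $h_E^{1-\ell}\cdot h_E^{m-1}\cdot h_E^{s-m}|z|_{s,E}=h_E^{s-\ell}|z|_{s,E}$, independently of $m$. Combined with the Proposition~\ref{app1} bound on $\|z-z_\pi\|_{\ell,E}$, this yields $\|z-I_h^E z\|_{\ell,E}\le C h_E^{s-\ell}|z|_{s,E}$ for $\ell=0,1,2$ and $2\le s\le k+1$, with $C$ depending only on the mesh-regularity parameter $\gamma$ and the degree $k$.
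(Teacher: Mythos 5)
The paper does not actually prove Proposition~\ref{app2}; it is quoted from the literature (the references \cite{BS-2008,ABSV2016} given just before the statement), and in those works the construction is \emph{not} the canonical degree-of-freedom interpolant you use. Your polynomial-reproduction-plus-scaling argument is the right skeleton and is sound for $s>2$: $\P_k(E)\subset\ZK$ holds as you argue (a degree-$k$ polynomial satisfies the boundary conditions of $\ZtK$ and the moment constraints in \eqref{intended_space:VEM} because $\PiK q=q$), the DoFs are scale-invariant, and the Bramble--Hilbert step is standard. But the endpoint $s=2$, which the proposition explicitly includes, is a genuine gap in your argument, not a removable technicality. The functionals $D2$ are point values of $\nabla z$, and in two dimensions $H^2(E)\not\hookrightarrow C^1(\overline E)$, so these functionals are \emph{unbounded} on $H^2(\widehat E)$: the operator $\widehat I$ is not defined on $H^2$, and your proposed density argument cannot close this, because an unbounded operator does not extend continuously from a dense subspace --- concretely, the constant in your reference stability estimate blows up as $s\downarrow 2$, so passing to the limit in $\|I_h^E z_n - z_n\|_{\ell,E}\le C(s_n)h^{s_n-\ell}|z_n|_{s_n,E}$ gives nothing at $s=2$.

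The statement is an existence statement ("there exists $I_h^E(z)\in\ZK$"), and the standard repair --- the one used in the cited $C^1$-VEM papers --- is to interpose a smoother object: first approximate $z\in H^2(E)$ by a globally $C^1$ piecewise polynomial $z_c$ (a Cl\'ement/Scott--Zhang averaged interpolant on a subtriangulation, built e.g.\ from reduced Hsieh--Clough--Tocher or Argyris elements, whose averaged DoFs are bounded on $H^2$), and only then apply the DoF interpolation to $z_c$, setting $I_h^E(z):=I_h^E(z_c)$; the triangle inequality then combines the quasi-interpolation error $\|z-z_c\|$ with your scaling argument applied to the smooth function $z_c$. A secondary point you gloss over: the bound $|\widehat\varphi_i|_{2,\widehat E}\le C$ for the canonical virtual basis functions, uniformly over the family of unit-diameter star-shaped polygons permitted by Assumption~\ref{mesh:regularity}, is itself nontrivial, since these functions are only defined implicitly through local biharmonic problems; this is usually established via an a priori bound for that local problem (or a norm-equivalence between the DoF vector and the $H^2$ norm on $\ZK$), and should be stated rather than assumed.
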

For each element $E\in \O_h$, we deduce that
\begin{equation*}
\dof_i(v_h)=\dof_i(I_h^E(v_h)) \quad \forall \ 1\leq i \leq N^{\dof}, 
\end{equation*}
where $v_h \in \ZK$ be an arbitrary element. The global interpolation is defined as
$I_h(v_h)|_E:=I_h^E(v_h)$.

The projection operator $R_h$ satisfies the following approximation properties.
\begin{lemma}
\label{lemma:engy:pro}
There exists a unique function $R_h(u)\in \mathcal{Z}_h$
such that the following approximation properties hold:
\begin{itemize}
\item[1)] There exists a positive constant $C$, independent of $h$, such that
\begin{equation*}
\|u-R_h(u)\|_{2,\O} \leq Ch^{\min\{s,k-1\}}|u|_{2+s,\O}
\quad  0\leq s \leq k-1.
\end{equation*}
\item[2)] There exist a positive constant $C$ and $\tilde{s} \in (1/2,1]$,
independent of $h$, such that
\begin{equation*}
\|u-R_h(u)\|_{1,\O} \leq Ch^{\tilde{s}+\min\{s,k-1\}}|u|_{2+s,\O}
\quad 0 \leq s \leq k-1.
\end{equation*}

\item[3)]  There exists a positive constant $C$, independent of $h$, such that
\begin{itemize}
\item[(a)] If $k=2$, then there exists $\tilde{s} \in (1/2,1]$,
independent of $h$, such that
\begin{equation*}
\|u-R_h(u)\|_{0,\Omega} \leq C h^{\tilde{s}+\min\{s,1\}}|u|_{2+s,\O}
\quad 0 \leq s\leq1.
\end{equation*}
\item[(b)] If $k\geq 3$, then there exist $s>1/2$ and
$\gamma \in (1/2,2]$, independent of $h$, such that
\begin{equation*}
\|u-R_h(u)\|_{0,\Omega} \leq C h^{\gamma+\min\{s,\,k-1\}}|u|_{2+s,\O}
\quad 0 \leq s \leq k-1.
\end{equation*}
\end{itemize}   

\end{itemize}   
\end{lemma}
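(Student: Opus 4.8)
Existence and uniqueness of $R_h(u)$ have already been obtained from the Lax--Milgram theorem, so the plan is to establish the three approximation estimates. For the energy estimate (item~1) I would run the standard conforming-VEM C\'ea argument. Writing $u-R_h(u)=(u-I_h u)+(I_h u-R_h(u))$ and setting $\delta_h:=I_h u-R_h(u)\in\mathcal{Z}_h$, coercivity of $\calA_h$ gives $\aL\,\calA(\delta_h,\delta_h)\le\calA_h(\delta_h,\delta_h)$, while the defining relation of $R_h$ yields $\calA_h(R_h(u),\delta_h)=\calA(u,\delta_h)$, so that $\calA_h(\delta_h,\delta_h)=\calA_h(I_h u,\delta_h)-\calA(u,\delta_h)$. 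On each element I would insert a local polynomial $\up$ from Proposition~\ref{app1} and use the polynomial consistency of Lemma~\ref{poly:consistency}, i.e. $\calA_h^E(\up,\delta_h)=\calA^E(\up,\delta_h)$, to rewrite the right-hand side as $\sum_E\big(\calA_h^E(I_hu-\up,\delta_h)+\calA^E(\up-u,\delta_h)\big)$. Bounding each factor by the stability of Lemma~\ref{stability} together with the continuity of $\calA^E$, and invoking Propositions~\ref{app1}--\ref{app2}, gives $|\delta_h|_{2,\O}\le Ch^{\min\{s,k-1\}}|u|_{2+s,\O}$; combined with the interpolation bound for $u-I_hu$ and the norm equivalence on $\Hdost$ coming from \eqref{euthdgfk}, item~1 follows.

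For the lower-order bounds (items~2 and~3) the plan is an Aubin--Nitsche duality argument. With $e:=u-R_h(u)$, for the $L^2$ estimate I would introduce the dual problem: find $\phi\in\Hdost$ with $\calA(\phi,v)=(e,v)_{0,\O}$ for all $v\in\Hdost$, so that $\|e\|_{0,\O}^2=\calA(\phi,e)$. I would split $\calA(\phi,e)=\calA(\phi-I_h\phi,e)+\calA(I_h\phi,e)$ and bound the first term by continuity, using item~1 and the elliptic-regularity estimate $\|\phi\|_{2+\gamma,\O}\le C\|e\|_{0,\O}$. The crucial point for the second term is that, again by the definition of $R_h$ and symmetry of $\calA$, one has $\calA(I_h\phi,e)=\calA(u,I_h\phi)-\calA(R_h(u),I_h\phi)=(\calA_h-\calA)(R_h(u),I_h\phi)$; that is, this term is precisely the VEM inconsistency. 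Inserting element-wise polynomial approximations $\up$ of $R_h(u)$ and $\phi_\pi$ of $\phi$ and applying polynomial consistency (Lemma~\ref{poly:consistency}) in both slots annihilates all mixed contributions, leaving only $\sum_E(\calA_h^E-\calA^E)(R_h(u)-\up,\,I_h\phi-\phi_\pi)$, which the stability bound controls by $Ch^{\gamma+\min\{s,k-1\}}|u|_{2+s,\O}\|e\|_{0,\O}$. Dividing by $\|e\|_{0,\O}$ yields item~3(b); item~3(a) is identical except that for $k=2$ the interpolant $I_h\phi$ is only of degree two and the available dual regularity is merely $H^{2+\tilde s}$, so the dual gain is $h^{\tilde s}$ rather than $h^\gamma$. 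The $H^1$ estimate (item~2) I would obtain by the same duality scheme with a dual datum measuring the $H^1$ error, whose reduced regularity again caps the gain at $\tilde s$ while the primal factor still contributes $h^{\min\{s,k-1\}}$.

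The hard part will be the duality argument for items~2 and~3, and it is here that most of the care is needed. First, one must pin down the correct regularity of the dual biharmonic problem under the mixed simply-supported/free boundary conditions on the polygonal domain $\O$: this is exactly what determines the admissible fractional exponents $\tilde s\in(1/2,1]$ and $\gamma\in(1/2,2]$, with the $k=2$ case further limited by the degree-two interpolation of the dual solution. Second, one must keep careful track of the VEM consistency term so that polynomial consistency removes the leading-order contributions, isolating the genuinely higher-order product $(R_h(u)-\up)(I_h\phi-\phi_\pi)$; any mishandling there would lose a power of $h$. By contrast, once the consistency-plus-stability machinery of Lemmas~\ref{poly:consistency} and~\ref{stability} is in place, the energy estimate in item~1 is essentially routine.
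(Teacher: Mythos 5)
Your proposal is correct, and for the substantive part---the $L^2$ estimate 3(b)---it coincides with the paper's argument: an Aubin--Nitsche duality with the auxiliary problem $\calA(\phi,v)=(u-R_h(u),v)_{0,\O}$, the regularity $\|\phi\|_{2+\gamma,\O}\le C\|u-R_h(u)\|_{0,\O}$ with $\gamma\in(1/2,2]$, and the identification of $\calA(I_h\phi,u-R_h(u))$ with the consistency defect $(\calA_h-\calA)(R_h(u),I_h\phi)$, killed to leading order by two applications of Lemma~\ref{poly:consistency}. The differences are in the remaining items. The paper dispatches items 1) and 2) by citation to the Navier--Stokes stream-function paper, whereas you supply the standard C\'ea-type argument for 1) and a second duality argument for 2); both are the expected fillings of that citation. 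More notably, for 3(a) the paper does \emph{not} run a separate duality argument: it simply observes that the claimed bound $Ch^{\tilde s+\min\{s,1\}}|u|_{2+s,\O}$ is the same rate as item 2), so the $L^2$ estimate for $k=2$ follows immediately from the $H^1$ estimate and the Poincar\'e-type inequality \eqref{euthdgfk}. Your direct duality treatment of 3(a) also works, but it is extra machinery for no extra rate; and your explanation of why the gain saturates at $\tilde s\le 1$ for $k=2$ slightly misattributes the cap to reduced dual regularity---the dual solution still lies in $H^{2+\gamma}(\O)$ with $\gamma\in(1/2,2]$ independently of $k$; what limits the gain is that degree-two polynomials approximate $\phi$ in the $H^2(E)$-seminorm at best to order $h_E^{\,1}$ (Proposition~\ref{app1} with $\delta\le k+1=3$), which is the first mechanism you name. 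Everything else, including the consistency-plus-stability bookkeeping that isolates the product $(R_h(u)-\up)(I_h\phi-\phi_\pi)$, is handled correctly.
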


\begin{proof}
The estimations of $u-R_h(u)$ in $\|\cdot\|_{2,\O}$ and $\|\cdot\|_{1,\O}$ norms can be proved
following analogous arguments as \cite{AMNS-Submitted2020}. Now, we proceed to prove the estimation
of $u-R_h(u)$ in $\|\cdot\|_{0,\O}$-norm.
We start with $(3a)$: the estimate is a direct consequence of the estimate (2)
and the Poincar\'e inequality.

Now, we continue with $(3b)$. Let $\phi \in H_{\ast}^2(\O)$ be the solution
of the following auxiliary variational problem:
\begin{equation}\label{aux-prob-L2}
\mathcal{A}(\phi,v)=\int_{\O} (u-R_h(u))  v \quad  \forall v \in H_{\ast}^2(\O) ,
\end{equation}
where $\mathcal{A}(\cdot, \cdot)=(\cdot,\cdot)_{H_{\ast}^2(\O)}$ (cf. \eqref{inner:H2st}).
As a consequence of a classical regularity result for the biharmonic
problem, there exists $\gamma \in (1/2,2]$
such that $\phi \in H^{2+\gamma}(\O) $ and  
\begin{equation*}
\|\phi\|_{2+\gamma, \O} \leq  C\|u-R_h(u)\|_{0,\O}.
\end{equation*}
Next, by standard duality arguments, we get
\begin{equation*}
\|u-R_h(u)\|_{0,\O}\leq  Ch^{\gamma+ \min\{s, \, k-1\}}|u|_{2+s,\O}, \quad k \geq 3.
\end{equation*}
The proof is complete.
\end{proof}

With the help of approximation property of the projection operator $R_h$,
we move to estimate the bound for the nonlinear term as follows.
\begin{lemma}
\label{lem:nonlocal:difference}
Let $u \in H^2_{\ast}(\O)$ be the solution of \eqref{weak_sol} and let $R_h$
be the Ritz-projection operator defined in \eqref{ritz:projection}.
Then, there exists a positive constant $C$ depends on mesh regularity of
Assumption~\ref{mesh:regularity} and Sobolev regularity of $u$, independent of $h$, such that
\begin{equation}
\begin{split}
\Big |\Big ( S \sum_{E \in \O_h} \int_{E} |\Pi^{k-1}_E D_x  u_h|^2 -P \Big)
&a_h^x(R_hu,v_h)-\Big (S \sum_{E \in \O_h} \int_{E} | D_x u|^2 -P \Big) a^x(u,v_h) \Big |\\
&\leq C(S,P)~h^{k}~ |u|_{k+1,\O}~\|D_xv_h\|_{0,\Omega}\\
& \quad+C~S~\|D_x(u_h-u)\|_{0,\O}~\|D_xv_h\|_{0,\O} \quad \forall  v_h \in \mathcal{Z}_h.
\end{split}
\label{nonlocal:difference}
\end{equation}
\end{lemma}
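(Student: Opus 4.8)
The plan is to separate the discrepancy into a \emph{coefficient} difference and a \emph{bilinear-form} difference via the elementary telescoping
\begin{equation*}
\mathcal{C}_h(u_h)\,a_h^x(R_hu,v_h)-\Big(S\|D_xu\|_{0,\O}^2-P\Big)a^x(u,v_h)
=\mathcal{C}_h(u_h)\big[a_h^x(R_hu,v_h)-a^x(u,v_h)\big]+\Delta_{\mathcal C}\,a^x(u,v_h),
\end{equation*}
where $\Delta_{\mathcal C}:=\mathcal{C}_h(u_h)-\big(S\|D_xu\|_{0,\O}^2-P\big)$, and then to bound the two pieces separately. Throughout I would use that $u_h$ lies in the ball $\mathcal{B}_d$ of Theorem~\ref{wellposed:full:discrete}, so that $|\mathcal{C}_h(u_h)|\le m_0$ and $\sum_E\|\Pi^{k-1}_ED_xu_h\|_{0,E}^2$ is uniformly bounded, while $\|D_xu\|_{0,\O}$ is a fixed constant. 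These uniform bounds are exactly what let me factor out the products that appear below.

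For the coefficient piece the constant $P$ cancels, and I would write the remaining difference of squares as
\begin{equation*}
\Delta_{\mathcal C}=S\sum_{E\in\O_h}\big(\|\Pi^{k-1}_ED_xu_h\|_{0,E}-\|D_xu\|_{0,E}\big)\big(\|\Pi^{k-1}_ED_xu_h\|_{0,E}+\|D_xu\|_{0,E}\big).
\end{equation*}
The reverse triangle inequality, the splitting $\Pi^{k-1}_ED_xu_h-D_xu=\Pi^{k-1}_ED_x(u_h-u)+(\Pi^{k-1}_E-I)D_xu$, and the $L^2$-projection estimate $\|(I-\Pi^{k-1}_E)D_xu\|_{0,E}\le Ch_E^{k}|u|_{k+1,E}$ (Proposition~\ref{app1}) then give, after an $\ell^2$ Cauchy--Schwarz over the elements and absorbing the bounded second factor, the bound $|\Delta_{\mathcal C}|\le CS\big(\|D_x(u_h-u)\|_{0,\O}+h^{k}|u|_{k+1,\O}\big)$. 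Multiplying by $|a^x(u,v_h)|\le\|D_xu\|_{0,\O}\|D_xv_h\|_{0,\O}$ reproduces \emph{both} terms on the right-hand side of \eqref{nonlocal:difference}: the $h^k$ part folds into $C(S,P)$, and the genuine $\|D_x(u_h-u)\|_{0,\O}$ part appears here (and only here), since it is the discrete coefficient that is evaluated at $u_h$ rather than at $u$.

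For the bilinear-form piece I would exploit polynomial consistency (Lemma~\ref{poly:consistency}). Inserting a piecewise-polynomial approximant $\up$ of $u$ (Proposition~\ref{app1}) and writing
\begin{equation*}
a_h^x(R_hu,v_h)-a^x(u,v_h)=a_h^x(R_hu-\up,v_h)+\big[a_h^x(\up,v_h)-a^x(\up,v_h)\big]+a^x(\up-u,v_h),
\end{equation*}
the middle bracket vanishes identically because $\up|_E\in\P_k(E)$ and $D_x\up|_E\in\P_{k-1}(E)$ is fixed by the projection. The term $a^x(\up-u,v_h)$ is bounded using $\|D_x(u-\up)\|_{0,\O}\le Ch^{k}|u|_{k+1,\O}$, and $a_h^x(R_hu-\up,v_h)$ using the contractivity $\|\Pi^{k-1}_E(\cdot)\|_{0,E}\le\|\cdot\|_{0,E}$ together with $\|D_x(R_hu-\up)\|_{0,\O}\le\|D_x(R_hu-u)\|_{0,\O}+Ch^{k}|u|_{k+1,\O}$. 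Multiplying by $|\mathcal{C}_h(u_h)|\le m_0$ yields a contribution of order $h^k|u|_{k+1,\O}\|D_xv_h\|_{0,\O}$, again folded into $C(S,P)$.

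The hard part will be the last estimate of the previous paragraph. The inconsistency of $a_h^x$ is removed only on \emph{genuine} polynomials, so everything must be routed through $\up$, after which the Ritz gradient error $\|D_x(R_hu-u)\|_{0,\O}=|R_hu-u|_{1,\O}$ must be absorbed. By Lemma~\ref{lemma:engy:pro}(2) this is of order $h^{\ts+k-1}$ with $\ts\in(1/2,1]$; reaching the clean exponent $h^{k}$ advertised in \eqref{nonlocal:difference} therefore hinges on the sharp $H^1$-estimate for $R_h$ (the favorable regularity regime $\ts=1$), and the delicate bookkeeping is to make sure the $u_h$-dependence enters \emph{solely} through the coefficient piece—producing the admissible $\|D_x(u_h-u)\|_{0,\O}$ term—while the form piece stays purely at the approximation order $h^k$.
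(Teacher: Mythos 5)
Your proposal is correct and follows essentially the same route as the paper's proof: the same add-and-subtract splitting into a coefficient difference and a bilinear-form difference, the same difference-of-squares/triangle-inequality treatment of the nonlocal coefficient, and the same use of polynomial consistency through an intermediate approximant $\up$ for the form discrepancy (your telescoping merely pairs the factors slightly differently, which is immaterial). You even flag explicitly the $\|D_x(R_hu-u)\|_{0,\O}$ subtlety that the paper bounds by $Ch^k$ without comment.
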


\begin{proof}
We have that
\begin{equation}
\begin{split}
&\Big [S \sum_{E \in \O_h}\int_{E} |\Pi^{k-1}_E D_x  u_h|^2 -P \Big] a_h^x(R_hu,v_h)-\Big [ S \sum_{E \in \O_h} \int_{E} |D_x u|^2 -P \Big] a^x(u,v_h)\\
& =\Big(S \sum_{E \in \O_h}\int_{E} |\Pi^{k-1}_E D_x  u_h|^2 a_h^x(R_hu,v_h)-S \sum_{E \in \O_h} \int_{E} | D_x u|^2 a^x(u,v_h) \Big )\\
& \quad + P\Big(a^x(u,v_h)-a_h^x(R_hu,v_h) \Big).
\end{split}
\label{proof:nonlocal:1}
\end{equation}  
Further, the first term on the right-hand side of \eqref{proof:nonlocal:1} can be written as 
\begin{equation*}
\begin{split}
&\Big(S\sum_{E \in \O_h} \int_{E} |\Pi^{k-1}_E D_x u_h|^2 a_h^x(R_hu,v_h)-S \sum_{E  \in \O_h}\int_{E} | D_x u|^2 a^x(u,v_h) \Big)\\
& =\Big( S \sum_{E \in \O_h} \int_{E} |\Pi^{k-1}_E D_x  u_h|^2-S \sum_{E \in \O_h}\int_{E} | D_x u|^2 \Big) a_h^x(R_hu,v_h)\\
& \quad + S \sum_{E \in \O_h}\int_{E} | D_x u|^2 \Big(a_h^x(R_hu,v_h)-a^x(u,v_h) \Big).
\end{split}
\end{equation*}

By using simple algebra and boundedness of discrete solution $u_h$,
regularity of $u$ and boundedness of the projection operator $\Pi^{k-1}_E$, we obtain
\begin{equation}
\begin{split}
&\Big( S \sum_{E \in \O_h}\int_{E} |\Pi^{k-1}_E D_x  u_h|^2-S \sum_{E \in \O_h} \int_{E} | D_x u|^2 \Big) a_h^x(R_hu,v_h)\\
&\leq S\sum_{E\in \O_h} \int_{E} \Big(|\Pi^{k-1}_E D_x  u_h|+| D_x u| \Big) ~\Big(|\Pi^{k-1}_E D_x  u_h|-| D_x u| \Big) \| D_xR_h u\|_{0,\O}~\|D_x v_h\|_{0,\O}\\
& \leq C~S~ \sum_{E \in \O_h}(\| \Pi^{k-1}_E D_x u_h\|_{0,E}+\|D_x u\|_{0,E})(\| \Pi^{k-1}_E D_x u_h-D_x u\|_{0,E})~\| D_xR_h u\|_{0,\O}~\|D_x v_h\|_{0,\Omega}\\ 
& \leq C~S (\sum_{E \in \O_h}(\| \Pi^{k-1}_E D_x u_h\|_{0,E}+\|D_x u\|_{0,E})^2)^{1/2}~(\sum_{E \in \O_h}(\| \Pi^{k-1}_E D_x u_h-D_x u\|_{0,E})^2)^{1/2} \\
& \quad\|D_x R_h u\|_{0,\O}~\|D_x v_h\|_{0,\O} \\
&\leq C(u_h)~S~ \Big(( \sum_{E \in \O_h}\|\Pi^{k-1}_E D_x u_h-\Pi^{k-1}_E D_x u\|_{0,E}^2)^{1/2}+(\sum_{E \in \O_h}\|\Pi^{k-1}_E D_x u-D_xu\|_{0,E}^2)^{1/2} \Big)\\
& \quad \|D_x R_h u \|_{0,\O}~\|D_x v_h\|_{0,\O}\\
&\leq C~S~ \big( |u_h-u|_{1,\O}+h^k |u|_{k+1,\O}\big)\|D_xR_h u\|_{0,\O}~\|D_xv_h\|_{0,\O}.
\end{split}
\label{proof:nonlocal:2}
\end{equation}

 Using the definition of discrete bilinear form $a_h^x(\cdot,\cdot)$ and approximation
 property of projection operator $R_h$, and polynomial approximation property of
 $a_h^x(\cdot,\cdot)$ (cf. Lemma~\ref{poly:consistency}), we derive 
 \begin{equation}
 \begin{split}
  \Big(a_h^x(R_hu,v_h)-a^x(u,v_h) \Big)&= \sum_{E\in \O_h}\Big(a_h^{x,E}(R_hu-u_{\pi},v_h)-a^{x,E}(u-u_{\pi},v_h) \Big)\\
 & \leq\sum_{E \in \O_h} (\|D_x(R_hu-u_{\pi})\|_{0,\O}+\|D_x(u-u_{\pi})\|_{0,E})\|D_xv_h\|_{0,E} \\
 &\leq C h^k |u|_{k+1,\O} \|D_xv_h\|_{0,\O}.
 \end{split}
 \label{proof:nonlocal:3}
 \end{equation}
Inserting the estimations \eqref{proof:nonlocal:2} and \eqref{proof:nonlocal:3} into \eqref{proof:nonlocal:1}, we derived the intended result.
 
\end{proof}

\subsection{Error estimates for semi-discrete scheme}
In this section, we will derive the error estimation for the semi-discrete scheme (cf. \eqref{semi_dis_schm:1}-\eqref{semi_dis_schm:3}). With
this end, we state the following theorem.
\begin{theorem}
\label{semi_dis:error}
Let $u$ be the solution of \eqref{weak_sol} and $u_h$ be the semi-discrete
solution of \eqref{semi_dis_schm:1}-\eqref{semi_dis_schm:3}. Let us assume that the Assumption~\ref{mesh:regularity}
holds. Furthermore, we assume that the exact solution $u$ satisfies following regularity
$u \in L^2(0,T;H^{k+1}(\Omega))$, $D_t u \in L^2(0,T;H^{k+1}(\Omega))$ and the force function
$g \in L^2(0,T;H^{k+1}(\Omega))$ and $u_{h,0}:=I_h u_0 \in \mathcal{Z}_h$
and $\omega_{h,0}:=I_h \omega_0 \in \mathcal{Z}_h $ be the initial approximation
of $u_0$ and $\omega_0$, respectively. Then, there exists a positive constant $C$
independent of mesh size $h$ but depends on mesh-regularity parameter $\gamma$,
Sobolev regularity of $u$, stability parameter of bilinear forms $\mathcal{A}_h(\cdot,\cdot)$,
and $m_h(\cdot,\cdot)$, and continuity of $a_h^x(\cdot,\cdot)$ such that
for all $t \in (0,T]$, the following regularity holds
\begin{equation*}
\begin{split}
&\|D_t(u-u_h)(t)\|_{0,\O}+\|u(t)-u_h(t)\|_{2,\O} \leq C(\TGstr,\TAstr)(\|D_t(u-u_h)(0)\|_{0,\O}+\|u(0)-u_h(0)\|_{2,\O})\\
& \quad +C(\TGstr,\TAstr,\TGstR,\delta) h^{k-1} \Big( |\omega_0|_{k+1,\O}+|u_0|_{k+1,\O}+\|g\|_{L^2(0,T;H^{k+1}(\O))}+\|D_{tt}u\|_{L^2(0,T;H^{k+1}(\O))} \\
& \quad +\|D_t u\|_{L^2(0,T;H^{k+1}(\O))}+\|u\|_{L^2(0,T;H^{k+1}(\O))} \Big).
\end{split}
\end{equation*}
\end{theorem}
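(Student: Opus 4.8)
The plan is to run the classical energy argument for second-order-in-time problems, built on the splitting \eqref{splitting}, $u-u_h=\varphi-\psi$ with $\varphi:=u-R_hu$ and $\psi:=u_h-R_hu\in\mathcal{Z}_h$. The term $\varphi$ is handled directly by the Ritz-projection estimates of Lemma~\ref{lemma:engy:pro}: the bound $\|\varphi\|_{2,\O}\le Ch^{k-1}|u|_{k+1,\O}$ supplies the dominant $h^{k-1}$ rate, while $\|D_t\varphi\|_{0,\O}=\|(I-R_h)D_tu\|_{0,\O}$ (using that $R_h$ is linear and time independent, so $R_hD_t=D_tR_h$) is of higher order by the $L^2$ estimate. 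Everything then reduces to controlling the discrete component $\psi$, where the whole difficulty is concentrated.

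First I would derive the error equation. Subtracting \eqref{semi_dis_schm:1} from \eqref{weak_sol} tested against $v_h\in\mathcal{Z}_h$, and using the defining identity \eqref{ritz:projection} of $R_h$ to rewrite $\calA_h(u_h,v_h)-\calA(u,v_h)=\calA_h(\psi,v_h)$, one obtains
\[
m_h(D_{tt}\psi,v_h)+\delta\,m_h(D_t\psi,v_h)+\calA_h(\psi,v_h)=\mathcal{R}(v_h),
\]
where $\mathcal{R}$ collects the mass and damping consistency terms $\langle D_{tt}u,v_h\rangle-m_h(D_{tt}R_hu,v_h)$ and $\delta[(D_tu,v_h)_{0,\O}-m_h(D_tR_hu,v_h)]$, the load discrepancy $(g_h,v_h)_h-(g,v_h)_{0,\O}$, and the nonlocal discrepancy $-[\mathcal{C}_h(u_h)a_h^x(u_h,v_h)-\mathcal{C}(u)a^x(u,v_h)]$. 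The core step is to choose $v_h=D_t\psi$: by symmetry and time-independence of $m_h$ and $\calA_h$, the left-hand side equals $\tfrac12\frac{d}{dt}\Theta(t)+\delta\,m_h(D_t\psi,D_t\psi)$, with $\Theta(t):=m_h(D_t\psi,D_t\psi)+\calA_h(\psi,\psi)$; the damping term is nonnegative and dropped, and $\Theta$ is equivalent to $\|D_t\psi\|_{0,\O}^2+\|\psi\|_{2,\O}^2$ by Lemma~\ref{stability}. Integrating over $[0,t]$ leaves me to bound $\int_0^t\mathcal{R}(D_t\psi)\,ds$.

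The consistency residuals are routine. For the mass and damping crimes I insert the $L^2$-best polynomial of $D_{tt}u$ (resp. $D_tu$), invoke polynomial consistency (Lemma~\ref{poly:consistency}) together with $m_h$-stability (Lemma~\ref{stability}), and reduce everything to polynomial and $L^2$ Ritz-errors of $D_{tt}u$, $D_tu$, $u$, each $O(h^{k-1})$ or better; the load term rewrites as $\sum_E(g-\Pi^k_Eg,(\Pi^k_E-I)D_t\psi)_{0,E}$ and is bounded by $Ch^{k+1}|g|_{k+1,\O}\|D_t\psi\|_{0,\O}$. After Cauchy--Schwarz and Young's inequality these produce the $h^{k-1}$-weighted norms of $g$, $u$, $D_tu$, $D_{tt}u$ appearing in the statement, plus $\|D_t\psi\|_{0,\O}^2$ contributions that Gr\"onwall's lemma will absorb.

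The main obstacle is the nonlocal term. I would split it as $\mathcal{C}_h(u_h)a_h^x(\psi,v_h)+[\mathcal{C}_h(u_h)a_h^x(R_hu,v_h)-\mathcal{C}(u)a^x(u,v_h)]$; the bracket is precisely what Lemma~\ref{lem:nonlocal:difference} bounds, contributing $C(S,P)h^k|u|_{k+1,\O}\|D_xv_h\|_{0,\O}$ and $CS\|D_x(u_h-u)\|_{0,\O}\|D_xv_h\|_{0,\O}$, where $\|D_x(u_h-u)\|_{0,\O}\le\|D_x\psi\|_{0,\O}+\|D_x\varphi\|_{0,\O}$ re-enters $\Theta$ through \eqref{euthdgfk}. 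The genuinely delicate piece is $\mathcal{C}_h(u_h)a_h^x(\psi,D_t\psi)$: since $\Theta$ controls only $\|D_t\psi\|_{0,\O}$ and not $|D_t\psi|_{1,\O}$, it cannot be estimated from the continuity bound \eqref{continuous:ax} directly. I would resolve it by the product-rule identity $\mathcal{C}_h(u_h)a_h^x(\psi,D_t\psi)=\frac{d}{dt}\bigl[\tfrac12\mathcal{C}_h(u_h)a_h^x(\psi,\psi)\bigr]-S\,a_h^x(u_h,D_tu_h)\,a_h^x(\psi,\psi)$ (using $\frac{d}{dt}\mathcal{C}_h(u_h)=2S\,a_h^x(u_h,D_tu_h)$): after time integration the exact-derivative part becomes a boundary term $\tfrac12\mathcal{C}_h(u_h(t))a_h^x(\psi(t),\psi(t))$, absorbed into $\calA_h(\psi,\psi)$ via $|\mathcal{C}_h|\le m_0$, \eqref{euthdgfk} and the lower stability constant $\aL$, while the remaining integral is controlled by Gr\"onwall's inequality using the a priori boundedness of $u_h$ and $D_tu_h$ furnished by the stability estimates for the semi-discrete scheme. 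Collecting all bounds yields an inequality of the form $\Theta(t)\le C\Theta(0)+C\int_0^t\Theta(s)\,ds+Ch^{2(k-1)}(\cdots)$, and Gr\"onwall's lemma, combined with the triangle inequality and the $\varphi$-estimates, gives the asserted bound.
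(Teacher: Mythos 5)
Your proposal follows the same route as the paper's proof: the identical splitting $u-u_h=\varphi-\psi$ through the Ritz projector $R_h$, the same error equation obtained from \eqref{weak_sol}, \eqref{semi_dis_schm:1} and \eqref{ritz:projection}, the same test function $v_h=D_t\psi$, the same consistency bounds for the mass, damping and load terms, Lemma~\ref{lem:nonlocal:difference} for the nonlocal discrepancy, and Gr\"onwall to close. The one place you genuinely deviate is the term $\mathcal{C}_h(u_h)a_h^x(\psi,D_t\psi)$. The paper writes it as $\tfrac12\mathcal{C}_h(u_h)\,D_t a_h^x(\psi,\psi)$, splits off the $-P$ part into the coercive combination $\tfrac12(\tfrac{\TAstr\lambda_1}{2}-P)D_t\Vert D_x\psi\Vert_{0,\O}^2$, and then simply \emph{neglects} the remaining piece $\tfrac12\bigl(S\sum_{E}\Vert\Pi^{k-1}_E D_xu_h\Vert_{0,E}^2\bigr)D_ta_h^x(\psi,\psi)$ from the left-hand side --- a step that is not justified as written, since $D_ta_h^x(\psi,\psi)$ has no sign. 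Your product-rule identity, which converts the term into an exact time derivative (integrating to a boundary term controlled by the coercivity of $\calA_h$ under the standing assumption on $P$) plus a correction $-S\,a_h^x(u_h,D_tu_h)\,a_h^x(\psi,\psi)$ absorbed by Gr\"onwall, is the cleaner and more defensible treatment of this step; note, however, that it requires an a~priori bound on $D_tu_h$ that the paper never establishes, so you would have to prove a stability lemma for the semi-discrete scheme first.

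One loose end remains, and it is the same one the paper relegates to a remark rather than resolving: when Lemma~\ref{lem:nonlocal:difference} is applied with $v_h=D_t\psi$, its right-hand side carries the factor $\Vert D_xD_t\psi\Vert_{0,\O}$, essentially $\vert D_t\psi\vert_{1,\O}$, which is \emph{not} controlled by your energy $\Theta(t)\simeq\Vert D_t\psi\Vert_{0,\O}^2+\Vert\psi\Vert_{2,\O}^2$. You correctly flag this obstruction for $\mathcal{C}_h(u_h)a_h^x(\psi,D_t\psi)$ and remove it there, but the terms $C(S,P)h^k\vert u\vert_{k+1,\O}\Vert D_xD_t\psi\Vert_{0,\O}$ and $CS\Vert D_x(u_h-u)\Vert_{0,\O}\Vert D_xD_t\psi\Vert_{0,\O}$ produced by the lemma still contain it, and your remark that these quantities ``re-enter $\Theta$ through \eqref{euthdgfk}'' only accounts for the first factor of each product. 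To close this you would need either an integration by parts in time (moving $D_t$ off $\psi$ at the price of boundary terms and time derivatives of the data), an inverse inequality (costing powers of $h$), or the paper's own absorption of $C(\epsilon,S^2,P^2)\vert D_t\psi\vert_{1,\O}^2$ into $\delta\,\gmL\Vert D_t\psi\Vert_{0,\O}^2$ --- which is itself only sketched. As it stands this step is a gap in the proposal, albeit one inherited from the structure of the argument rather than introduced by you.
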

\begin{proof}
Using weak formulation~\eqref{weak_sol}, semi-discrete formulation~\eqref{semi_dis_schm:1},
definition of $R_h$ in \eqref{ritz:projection}, and the splitting of $u-u_h$ as \eqref{splitting}, we obtain 
\begin{equation}
\begin{split}
&m_h(D_{tt} \psi(t), v_h)+ \delta m_h(D_t \psi(t),v_h)+\calA_h(\psi(t),v_h)+\Big[S\sum_{E \in \O_h} \int_{E}|\Pi^{k-1}_E D_x u_h|^2-P \Big]a_h^x(\psi(t),v_h)\\
&=(g_h,v_h)_h
-m_h(D_{tt} R_hu(t),v_h)-\delta m_h(D_t R_hu(t),v_h)-\calA_h(R_h u(t),v_h)\\
& \quad -\Big[S \sum_{E \in \O_h}\int_{E}|\Pi^{k-1}_E D_x u_h|^2-P \Big]a_h^x(R_hu(t),v_h) \\
&=(g_h,v_h)_h-(g,v_h)-m_h(D_{tt} R_hu(t),v_h)+(D_{tt} u(t),v_h)-\delta m_h(D_t R_hu(t),v_h)\\
& \quad+\delta (D_t u(t),v_h)-\Big[S \sum_{E \in \O_h}\int_{E}|\Pi^{k-1}_E D_x u_h|^2-P \Big] a_h^x(R_hu,v_h) \\
& \quad +\Big[S \sum_{E \in \O_h} \int_{E}| D_x u|^2-P \Big]~a^x(u,v_h).
\end{split}
\label{semi_dis:eqn1}
\end{equation}

By using the approximation property of the projection operator $\Pi^{k}_E$
and Cauchy-Schwarz inequality, we bound the load term as below
\begin{equation}
|(g_h,v_h)_h-(g,v_h)| \leq  \sum_{E \in \O_h} \|\Pi^k_E g-g\|_{0,E} \|v_h\|_{0,E} \leq C h^{k+1}|g|_{k+1,\O}~\|v_h\|_{0,\O}.
\label{semi_dis:eqn2}
\end{equation}

An application of approximation property of  Ritz  operator $R_h$ (cf. Lemma~\ref{lemma:engy:pro})
and polynomial consistency of $m_h(\cdot,\cdot)$ (cf. Lemma~\ref{poly:consistency}), continuity of $m_h(\cdot,\cdot)$
and polynomial approximation property (cf. Lemma~\ref{app1}) yields the bound

\begin{equation}
\begin{split}
|m_h(D_{tt} R_hu,v_h)-(D_{tt} u,v_h)|&\leq |m_h(D_{tt} R_hu-D_{tt}u_{\pi},v_h)|+|(D_{tt}u_{\pi}-D_{tt}u,v_h)|\\
&\leq C(\TGstR) \sum_{E \in \O_h} (\|D_{tt} R_hu-D_{tt}u_{\pi}\|_{0,E}+\|D_{tt}u_{\pi}-D_{tt}u\|_{0,E})\|v_h\|_{0,E} \\
&\leq C(\TGstR)~h^{k+1}|D_{tt}u|_{k+1,\O} \|v_h\|_{0,\O}.
\end{split}
\label{semi_dis:eqn3}
\end{equation} 
In the above estimation, we have used the property that $R_h$ commutes with time-derivative and $C(\TGstR)$ is a positive generic constant.

By using analogous arguments as \eqref{semi_dis:eqn3}, we bound the term
\begin{equation}
\delta~|m_h(D_{t} R_hu,v_h)-(D_{t} u,v_h)| \leq C(\TGstR,\delta)h^{k+1} |D_t u|_{k+1,\O}~\|v_h\|_{0,\O}.
\label{semi_dis:eqn4}
\end{equation} 

Further, using Lemma~\ref{lem:nonlocal:difference},
we estimate the error for nonlinear term as below
\begin{equation}
\begin{split}
& \Big |\Big[S \sum_{E \in \O_h}\int_{E}|\Pi^{k-1}_E D_x u_h|^2-P \Big] a_h^x(R_hu,v_h)-\Big[S \sum_{E \in \O_h} \int_{E}| D_x u|^2-P \Big]~a^x(u,v_h) \Big |\\
& \leq C(S,P)~ h^k |u|_{k+1,\O}~\|D_xv_h\|_{0,\O}+C~S~ \|D_x\psi\|_{0,\O}~\|D_xv_h\|_{0,\O}.   
\end{split}
\label{semi_dis:eqn5}
\end{equation}

Inserting \eqref{semi_dis:eqn2}-\eqref{semi_dis:eqn5} into \eqref{semi_dis:eqn1},
substituting the test function $v_h:=D_t\psi$, using continuity and stability
of $a_h^x(\cdot,\cdot)$, and assumption on $P$ we derive
\begin{equation}
\begin{split}
&\frac{1}{2} D_t m_h(D_t \psi,D_t \psi)+\delta m_h(D_t \psi,D_t \psi)+\frac{1}{4} D_t\mathcal{A}_h(\psi,\psi)+ \frac{1}{2} \Big(\frac{\TAstr~\lambda_1}{2}-P \Big)D_t\|D_x \psi(t)\|_{0,\O}^2 \\
&\quad + \frac{1}{2}\Big(S \sum_{E\in \O_h} \|\Pi^{k-1}_ED_x u_h\|_{0,E}^2 \Big) D_t a_h^x(\psi,\psi)\\
&  \leq C h^{k+1} (|g|_{k+1,\O}+|D_{tt}u|_{k+1,\O}+|D_{t}u|_{k+1,\O})\|D_t \psi\|_{0,\O} +C(S,P) h^k~|u|_{k+1,\O}|D_t \psi|_{1,\Omega} \\
& \quad +C~S~\|D_x\psi\|_{0,\Omega}~ \|D_t\psi\|_{1,\O}.
\end{split}
\label{semi_dis:eqn6}
\end{equation}
By using stability property of the discrete bilinear forms $m_h(\cdot,\cdot)$, $\mathcal{A}_h(\cdot,\cdot)$ (cf. Lemma~\ref{stability}),
and Young's inequality, and neglecting the term $\frac{1}{2}\Big(S \sum_{E\in \O_h} \|\Pi^{k-1}_ED_x u_h\|_{0,E}^2 \Big) D_t a_h^x(\psi,\psi)$, we obtain
\begin{equation}
\begin{split}
&\frac{1}{2}~\TGstr D_t\|D_t \psi(t)\|_{0,\O}^2+ \delta \TGstr~\|D_t \psi\|_{0,\O}^2 +\frac{1}{4} \TAstr D_t \|\psi(t)\|_{2,\O}^2
+\frac{1}{2} \Big(\frac{\TAstr~\lambda_1}{2}-P \Big) D_t\|D_x \psi(t)\|_{0,\O}^2\\
&\leq  C h^{2k}(|g|_{k+1,\O}^2+|D_{tt}u|_{k+1,\O}^2+|D_{t}u|_{k+1,\O}^2)+C \|D_t\psi\|_0^2+C(S,P)~h^{k}~|u|_{k+1,\O}|D_t \psi|_{1,\Omega}\\
& \quad +CS~\|D_x\psi\|_{0,\Omega}~ |D_t\psi|_{1,\O}.
\end{split}
\label{semi_dis:eqn7}
\end{equation}

Again, by applying Young's inequality and since the term $|D_t \psi|_{1,\Omega}$ is bounded,
we can build the term  $C(\epsilon,S^2,P^2)~|D_t \psi|_{1,\Omega}^2 $, which can be absorbed
by $\delta \TGstr \|D_t \psi\|_{0,\Omega}^2$ where $\epsilon$ is small positive parameter and we rewrite \eqref{semi_dis:eqn7} as follows

  \begin{equation}
\begin{split}
&\frac{1}{2}~\TGstr D_t\|D_t \psi(t)\|_{0,\O}^2+ \Big ( \delta \TGstr~\|D_t \psi\|_{0,\O}^2-C(\epsilon,S^2,P^2)~|D_t \psi|_{1,\Omega}^2 \Big ) +\frac{1}{4} \TAstr D_t \|\psi(t)\|_{2,\O}^2\\
& \quad  +\frac{1}{2} \Big(\frac{\TAstr~\lambda_1}{2}-P \Big) D_t\|D_x \psi(t)\|_{0,\O}^2\\
&\leq  C h^{2k}(|g|_{k+1,\O}^2+|D_{tt}u|_{k+1,\O}^2+|D_{t}u|_{k+1,\O}^2)+C \|D_t\psi\|_0^2+C~ h^{2k}~|u|_{k+1,\O}^2\\
& \quad +C~\|D_x\psi\|_{0,\Omega}^2.
\end{split}
\label{semi_dis:eqn71}
\end{equation}

Upon integrating both sides of \eqref{semi_dis:eqn71} with respect to $t$, and exploiting Gronwall inequality, we derive
\begin{equation}
\begin{split}
&\TGstr \|D_t \psi(t)\|_{0,\O}^2+\TAstr \|\psi(t)\|_{2,\O}^2\leq C(\TGstr,\TAstr)\Big (\|D_t \psi(0)\|_{0,\O}^2+\|\psi(0)\|_{2,\O}^2 \Big) \\
& \quad +C(\TGstr,\TAstr,\TGstR,\delta) h^{2k} \Big(\|g\|_{L^2(0,T;H^{k+1}(\O))}^2+\|D_{tt}u\|_{L^2(0,T;H^{k+1}(\O))}^2+\|D_t u\|_{L^2(0,T;H^{k+1}(\O))}^2 \\
& \quad+\|u\|_{L^2(0,T;H^{k+1}(\O))}^2\Big).
\end{split}
\label{semi_dis:eqn8}
\end{equation}

An application of approximation property of Ritz operator $R_h$ (cf. Lemma~\ref{lemma:engy:pro}), we derive as below
\begin{equation*}
\begin{split}
&\|D_t(u-u_h)(t)\|_{0,\O}+\|u(t)-u_h(t)\|_{2,\O} \leq C(\TGstr,\TAstr) \Big( \|D_t(u-u_h)(0)\|_{0,\O}+\|u(0)-u_h(0)\|_{2,\O}\Big) \\
&\quad +C(\TGstr,\TAstr,\TGstR,\delta) h^{k-1} \Big(|\omega_0|_{k+1,\Omega}+|u_0|_{k+1,\Omega}+\|g\|_{L^2(0,T;H^{k+1}(\O))}+\|D_{tt}u\|_{L^2(0,T;H^{k+1}(\O))} \\
& \quad +\|D_t u\|_{L^2(0,T;H^{k+1}(\O))}+\|u\|_{L^2(0,T;H^{k+1}(\O))} \Big).
\end{split}
\end{equation*}

\end{proof}
\begin{remark}
Using Young's inequality, we have chosen the coefficient in \eqref{semi_dis:eqn71} $C(\epsilon,S^2,P^2)$ small enough such that $C(\epsilon,S^2,P^2) |D_t \psi|_{1,\Omega}^2$ can be absorbed by the term $\delta~\widetilde{\gamma_{\ast}} \|D_t \psi \|_{0,\Omega}^2$. A straight forward calculation infer the choice of $\epsilon$ which should be satisfied
$$ \epsilon \leq \frac{\delta~\widetilde{\gamma_{\ast}}~\|D_t \psi\|^2_{0,\Omega}}{C(S^2,P^2)~|D_t \psi|_{1,\Omega}^2}.$$
Since the coefficient $1/C(S^2,P^2)$  involving $S^2$ and $P^2$ is a sufficiently big quantity, we can choose $\epsilon$ such that the coefficient in right-hand side of \eqref{semi_dis:eqn71} involving $1/\epsilon$ is a bounded quantity which does not affect the optimal order of convergence as stated in Theorem~\ref{semi_dis:error}.
\end{remark}

\subsection{Error estimates for fully-discrete scheme}
In this section, we would like to study the convergence analysis of the fully-discrete scheme.
With this aim, for each time-step $t_n$, we denote by $u^n:=u(t_n)$ $1\leq n\leq N$.
We divide the error as below
\begin{equation*}
u^n-U_h^n=u^n-R_hu^n+R_hu^n-U_h^n:=\varphi^n-\psi^n
\end{equation*}
By utilizing the approximation property of  $R_h$ at time $t=t_n$, we can estimate $\varphi^n$.
Therefore, we focus on the bound of $\psi^n$. Further, to present the analysis ambiguously, we introduce the following notation:
 \begin{equation*}
d_t^2 \psi^n:=\frac{\psi^{n}-2\psi^{n-1}+\psi^{n-2}}{\Delta t^2};
\qquad \partial_t \psi^n:=\frac{\psi^{n}-\psi^{n-2}}{2 \Delta t}.
\end{equation*}

\begin{theorem}
\label{fulldis:mainth}
Let $u \in H^2_{\ast}(\O)$ be the solution of \eqref{weak_sol} and let $U_h^n \in \mathcal{Z}_h$
be the solution of \eqref{full:discrete:model} for time $t=t_n$, where $1\leq n\leq N$.
Further, assume that the Assumptions~\ref{mesh:regularity}, and assumption of
Theorem~\ref{wellposed:full:discrete}  satisfy and
$\Big \|\frac{\psi^1-\psi^0}{\Delta t}  \Big \|_{0,\Omega}=O(h^{k+1}+\Delta t^2)$
\cite[Theorems~3 and 4]{D73}.
Then under the assumption of Theorem~\ref{semi_dis:error}, there exists a positive generic constant $C$
that depends on mesh regularity parameter $\gamma$, Sobolev regularity of $u$, stability parameters
of bilinear forms $\mathcal{A}_h(\cdot,\cdot)$, and $m_h(\cdot,\cdot)$, and continuity of $a_h^x(\cdot,\cdot)$
but independent of mesh size $h$ and time-step $\Delta t$ such that the following estimation holds
\begin{equation*}
\|u^n-U_h^n\|_{2,\O} \leq C(\TGstr,\TAstr,\TGstR, u,D_{ttt}u,D_{tt}u,D_t u,g,d ) ~\Big(h^{k-1}+\Delta t^2 \Big).
\end{equation*}  
\end{theorem}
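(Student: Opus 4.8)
The plan is to mirror the structure of the semi-discrete error estimate (Theorem~\ref{semi_dis:error}), but now carrying the additional time-discretization errors arising from the finite-difference approximation of $D_{tt}u$ and $D_t u$. First I would derive an error equation for $\psi^n := R_hu^n - U_h^n$ by subtracting the fully-discrete scheme~\eqref{full:discrete:model} from the weak formulation~\eqref{weak_sol} evaluated at $t=t_n$, inserting the Ritz projection $R_h$ via its defining property~\eqref{ritz:projection}, and splitting $u^n-U_h^n = \varphi^n - \psi^n$. This produces a discrete equation of the form
\begin{equation*}
\begin{split}
&m_h(d_t^2\psi^n,v_h)+\delta\, m_h(\partial_t\psi^n,v_h)+\calA_h(\psi^n,v_h)
+\Big[S\textstyle\sum_{E}\|\Pi^{k-1}_E D_xU_h^n\|_{0,E}^2-P\Big]a_h^x(\psi^n,v_h)\\
&\qquad = \calR_1(v_h)+\calR_2(v_h)+\calR_3(v_h),
\end{split}
\end{equation*}
where $\calR_1$ collects the spatial consistency errors (load term, mass-matrix approximation of $D_{tt}u$ and $D_tu$, and the nonlinear term bounded by Lemma~\ref{lem:nonlocal:difference}), $\calR_2$ collects the truncation errors $d_t^2 R_hu^n - D_{tt}u(t_n)$ and $\partial_t R_hu^n - D_tu(t_n)$ coming from replacing time derivatives by centered differences, and $\calR_3$ gathers the nonlocal-coefficient discrepancy.

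\emph{Next} I would choose the discrete test function $v_h := \partial_t\psi^n = (\psi^n-\psi^{n-2})/(2\Delta t)$, which is the fully-discrete analogue of $v_h=D_t\psi$ used in the semi-discrete proof. With this choice the mass and biharmonic terms telescope: using the identity
$m_h(d_t^2\psi^n,\partial_t\psi^n)=\frac{1}{4\Delta t}\big(\|(\psi^n-\psi^{n-1})/\Delta t\|_{m}^2-\|(\psi^{n-1}-\psi^{n-2})/\Delta t\|_m^2\big)$
(where $\|\cdot\|_m$ is the mesh-dependent norm induced by $m_h$), and the corresponding summation-by-parts identity for $\calA_h(\psi^n,\partial_t\psi^n)$, one obtains a discrete energy difference. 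The term $\delta\, m_h(\partial_t\psi^n,\partial_t\psi^n)\ge \delta\gmL\|\partial_t\psi^n\|_{0,\O}^2$ furnishes the dissipation needed to absorb the $H^1$-seminorm contributions from the nonlinear term (exactly as $\delta\gmL\|D_t\psi\|_{0,\O}^2$ did in the semi-discrete argument), while the assumption $P\in(0,\TAstr\lambda_1/2)$ keeps the coefficient $(\TAstr\lambda_1/2 - P)$ of $\|D_x\psi^n\|_{0,\O}^2$ strictly positive.

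\emph{Then} I would bound each residual: the spatial terms give $O(h^{k+1})$ in $\|\cdot\|_{0,\O}$ and $O(h^k)$ from the nonlinear term via Lemma~\ref{lem:nonlocal:difference}, which after integration produce the claimed $h^{k-1}$ rate in the $H^2$-norm; the time-truncation residuals $\calR_2$ are $O(\Delta t^2)$ by Taylor expansion of the centered differences (requiring $D_{ttt}u$, $D_{tt}u$ to be controlled, hence their appearance in the constant $C$), and this is where the regularity assumptions feeding into the $\Delta t^2$ rate enter. After applying Young's inequality to split products and the discrete Gronwall lemma to the resulting recursion in $n$, I would sum over the time-levels and use the hypothesis $\|(\psi^1-\psi^0)/\Delta t\|_{0,\O}=O(h^{k+1}+\Delta t^2)$ to control the starting data, finally converting $\|\psi^n\|_{2,\O}$ back to $\|u^n-U_h^n\|_{2,\O}$ through the triangle inequality and the Ritz-bound of Lemma~\ref{lemma:engy:pro}.

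\emph{The main obstacle} I expect is the treatment of the nonlocal nonlinear coefficient $\calC_h(U_h^n)$ together with the summation-by-parts for the nonlinear form $a_h^x(\psi^n,\partial_t\psi^n)$. Unlike the constant-coefficient biharmonic term, the factor $S\sum_E\|\Pi^{k-1}_E D_xU_h^n\|_{0,E}^2$ depends on the unknown at the \emph{current} time-level, so it does not telescope cleanly; I would handle this by exploiting the boundedness $\|U_h^n\|_{2,\O}\le d$ from Theorem~\ref{wellposed:full:discrete} to treat the nonlinear coefficient as uniformly bounded, rewriting its time-difference using the Lipschitz estimate~\eqref{Lipschitz:nonlocal}, and absorbing the resulting $\|D_x\psi^n\|_{0,\O}$ contributions either into the positive coefficient $(\TAstr\lambda_1/2-P)$ or into the damping term $\delta\gmL\|\partial_t\psi^n\|_{0,\O}^2$ via Young's inequality with a suitably small parameter $\epsilon$, exactly as in the remark following Theorem~\ref{semi_dis:error}. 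Keeping every constant independent of $\Delta t$ throughout this absorption step is the delicate point, since it is what guarantees the estimate remains uniform as $\Delta t\to0$.
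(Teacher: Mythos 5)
Your proposal follows essentially the same route as the paper's proof: the same error equation for $\psi^n=R_hu^n-U_h^n$ obtained via the Ritz projection, the same test function $v_h=\partial_t\psi^n$ with the telescoping identities for $m_h(d_t^2\psi^n,\partial_t\psi^n)$ and $\calA_h(\psi^n,\partial_t\psi^n)$, the same use of Lemma~\ref{lem:nonlocal:difference} and the boundedness $\|U_h^n\|_{2,\O}\le d$ for the nonlocal term, Taylor truncation bounds of order $\Delta t^2$ involving $D_{ttt}u$ and $D_{tt}u$, and a discrete Gronwall argument closed by the hypothesis on $\|(\psi^1-\psi^0)/\Delta t\|_{0,\O}$. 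The only cosmetic deviation is where you absorb the leftover $\|D_x\psi^n\|_{0,\O}$ contribution (the paper ultimately relies on the smallness condition $CS<1$ before Gronwall rather than on the damping term), which does not change the argument.
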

\begin{proof}
By using the fully-discrete scheme \eqref{full:discrete:model}, weak formulation \eqref{weak_sol} and \eqref{ritz:projection}, we derive
\begin{equation}
\begin{split}
&m_h(d_t^2 \psi^n,v_h)+\delta m_h(\partial_t \psi^n,v_h)+\mathcal{A}_h(\psi^{n},v_h)+\Big[S \sum_{E \in \O_h}\int_{E}|\Pi^{k-1}_E D_x U_h^{n}|^2-P \Big]a_h^x(\psi^{n},v_h)\\
&=(g^{n}_{h},v_h)_h-m_h(d_t^2 R_h u^n,v_h)-\delta m_h(\partial_t R_h u^n,v_h)-\mathcal{A}_h(R_h u^{n},v_h)\\
& \quad -\Big[S \sum_{E \in \O_h}\int_{E} |\Pi^{k-1}_E D_x U_h^{n}|^2-P \Big] a_h^x(R_hu^{n},v_h)\\
&=(g_h^{n},v_h)_h-(g^{n},v_h)-m_h(d_t^2 R_h u^n,v_h)+(D_{tt}u^{n},v_h)\\
& \quad -\delta m_h(\partial_tR_h u^n,v_h ) +\delta (D_t u^n,v_h)-\mathcal{A}_h(R_h u^{n},v_h)+\mathcal{A}(u^{n},v_h)\\
&\quad-\Big[S\sum_{E \in \O_h} \int_{E} |\Pi^{k-1}_E D_x U_h^{n}|^2-P \Big] a_h^x(R_hu^{n}_h,v_h)+\Big[S \sum_{E \in \O_h} \int_{E}| D_xu^{n}|^2-P \Big] a^x(u^{n},v_h).
\end{split}
\label{fulldis:proof:1}
\end{equation}

By using approximation property of the projection operator $\Pi^{k}$ at time $t=t_n$, we derive
\begin{equation}
|(g_h^{n},v_h)_h-(g^{n},v_h)|\leq C h^{k+1}|g^{n}|_{k+1,\O}\|v_h\|_{0,\O}.
\label{fulldis:proof:2}
\end{equation}
Next, we split the third and fourth terms on the right hand side in \eqref{fulldis:proof:1} as follows
\begin{equation}
m_h(d_t^2 R_h u^n,v_h)-(D_{tt}u^{n},v_h)=m_h(d_t^2 R_h u^n,v_h)-(d_t^2u^n,v_h)+(d_t^2u^n,v_h)-(D_{tt}u^{n},v_h)
\end{equation}
Using approximation property of $R_h$ and following the analogous technique as \cite{AN-MCS2020,VB15}, we estimate as 
\begin{equation}
|m_h(d_t^2 R_h u^n,v_h)-(d_t^2u^n,v_h)|\leq Ch^{k+1}|d_t^2 u^n|_{k+1,\O} \|v_h\|_{0,\O}.
\end{equation}

Using Taylor's series expansion and fundamental theorem of calculus \cite{D73,AN-MCS2020}, we bound 
\begin{equation}
|(d_t^2u^n,v_h)-(D_{tt}u^{n},v_h)|\leq C \Delta t~\|D_{ttt}u^{n-1}\|_{0,\O}\|v_h\|_{0,\O}.
\end{equation}

By emphasizing on identical techniques as \cite{AN-NMPDE2019,VB15}, we bound the following term as below
\begin{equation*}
\begin{split}
|-\delta m_h(\partial_tR_h u^n,v_h ) +\delta (D_t u^n,v_h)|&\leq
C(\delta)~\Big(h^{k+1}~|\partial_t u^n|_{k+1,\O}\|v_h\|_{0,\O} \\
&+\Delta t^2~\|D_{tt} u^{n}\|_{0,\O} \|v_h\|_{0,\O} \Big).
\end{split}
\end{equation*}

By utilizing approximation property of the Ritz projection operator $R_h$, polynomial consistency property,
stability and continuity of $\mathcal{A}_h(\cdot,\cdot)$ and \eqref{ritz:projection}, we derive
\begin{equation}
|\mathcal{A}_h(R_hu^{n},v_h)-\mathcal{A}_h(u^{n},v_h)|\leq C(\TAstR) h^{k+1}~|u^{n}|_{k+1,\O}~\|v_h\|_{2,\O}.
\end{equation}
Now, we proceed to bound the nonlocal term. Using Lemma~\ref{lem:nonlocal:difference} at time $t=t_{n}$, we obtain 
\begin{equation}
\begin{split}
&-\Big[S \sum_{E \in \O_h}\int_{E} |\Pi^{k-1}_E D_x U_h^{n}|^2-P \Big] a_h^x(R_hu^{n}_h,v_h)+\Big[S \sum_{E \in \O_h}\int_{E} |D_xu^{n}|^2-P \Big] a^x(u^{n},v_h)\\ 
& \leq C(S,P)~ h^{k}~|u^{n}|_{k+1,\O}~\|D_x v_h\|_{0,\O}  + C~S~\|D_x\psi^{n}\|_{0,\O}~\|D_x v_h\|_{0,\O} .
\end{split}
\label{fulldis:proof:3}
\end{equation}

By choosing $v_h=\partial_t \psi^n$ into \eqref{fulldis:proof:1} and inserting \eqref{fulldis:proof:2}-\eqref{fulldis:proof:3}
into \eqref{fulldis:proof:1}, and using third inequality in \eqref{euthdgfk}, we obtain
\begin{equation}
\begin{split}
& \frac{C(\TGstr)}{2} \Big \|\frac{\psi^{n}-\psi^{n-1}}{\Delta t} \Big \|^2_{0,\O} -C(\TGstR) \Big\|\frac{\psi^{n-1}-\psi^{n-2}}{\Delta t} \Big \|^2_{0,\O} 
+ \delta~ \TGstr~ \Delta t~\|\partial_t \psi^n\|^2_{0,\O}+ \frac{\TAstr}{2 } \|\psi^n\|_{2,\O}^2 \\
& \quad +\left ( \frac{\TAstr~\lambda_1}{2}-P \right) \|D_x \psi^n\|_{0,\O}^2+ \left ( S \sum_{E \in \Omega_h} \|\Pi^{k-1}_E D_x \psi^n\|^2_{0,E} \right ) \|D_x \psi^n\|_{0,\O}^2 \\
&\leq  |\calA_h(\psi^n,\psi^{n-2})|+m_0~ |a_h^x(\psi^n,\psi^{n-2})|+C ~h^{2k}\Big(|g^{n}|_{k+1,\O}^2+ |d_t^2u^n|_{k+1,\O}^2+|\partial_tu^n|_{k+1,\O}+|u^{n}|_{k+1,\O}^2\Big ) \\
&  \quad +C \Delta t^4 \Big( \|D_{ttt} u^{n-1}\|_{0,\O}^2+ \|D_{tt}u^{n}\|_{0,\O}^2 \Big)
+C \Delta t~\left ( \Big \|\frac{\psi^{n}-\psi^{n-1}}{\Delta t}\Big \|^2_{0,\Omega}+ \Big \|\frac{\psi^{n-1}-\psi^{n-2}}{\Delta t} \Big \|^2_{0,\O} \right )\\
& \quad + C~S~ \|\psi^n\|_{2,\O}^2+C~\|D_x \psi^{n-2}\|^2_{0,\Omega} +C~ \Big \|\frac{\psi^{n-1}-\psi^{n-2}}{\Delta t} \Big \|^2_{0,\O}.
\end{split}
\label{fulldis:proof:4}
\end{equation}

Using Young's inequality, and assumption on $P$ (cf. Theorem~\ref{wellposed:full:discrete}),
and neglecting the positive term $\left ( S \sum_{E \in \Omega_h} \|\Pi^{k-1}_E D_x \psi^n\|^2_{0,E} \right ) \|D_x \psi^n\|_{0,\O}^2$, we obtain 

\begin{equation}
\begin{split}
& \frac{C(\TGstr)}{2} \Big \|\frac{\psi^{n}-\psi^{n-1}}{\Delta t} \Big \|^2_{0,\Omega}- C(\TGstR) \Big\|\frac{\psi^{n-1}-\psi^{n-2}}{\Delta t} \Big \|^2_{0,\Omega} + \delta~ \TGstr~ \Delta t~\|\partial_t \psi^n\|^2_{0,\Omega}+ \frac{\TAstr}{4 } \|\psi^n\|_{2,\Omega}^2 \\
& \quad +1/2 \left ( \frac{\TAstr~\lambda_1}{2}-P \right) \|D_x \psi^n\|_{0,\Omega}^2 \\
&\leq C(\TAstr,\TAstR) \|\psi^{n-2}\|_{2,\Omega}^2+C(m_0,\TAstr)~\|D_x \psi^{n-2}\|_{0,\Omega}^2 +C ~h^{2k}\Big(|g^{n}|_{k+1,\Omega}^2 \\
& \quad + |d_t^2u^n|_{k+1,\Omega}^2+|\partial_tu^n|_{k+1,\Omega} +|u^{n}|_{k+1,\Omega}^2\Big ) +C \Delta t^4 \Big( \|D_{ttt} u^{n-1}\|^2_{0,\Omega}+ \|D_{tt}u^{n}\|^2_{0,\Omega} \Big) \\
& \quad +C \Delta t~\left ( \Big \|\frac{\psi^{n}-\psi^{n-1}}{\Delta t}\Big \|^2_{0,\Omega}+ \Big \|\frac{\psi^{n-1}-\psi^{n-2}}{\Delta t} \Big \|^2_{0,\Omega} \right )+C~S~\|\psi^n\|_{2,\Omega}^2+C~ \Big \|\frac{\psi^{n-1}-\psi^{n-2}}{\Delta t} \Big \|^2_{0,\O}.
\end{split}
\label{fulldis:proof:4:updated}
\end{equation}

Following \cite{AN-MCS2020}, we write the term as
\begin{equation*}
d_t^2 u^n:=\frac{1}{\Delta t^2} \int_{-\Delta t}^{\Delta t} (\Delta t -|\tau|) D_{tt}u^{n-1}(t_{n-1}+\tau)~ {\rm d} \tau,
\end{equation*}
which implies 
\begin{equation}
\Delta t \sum_{n=2}^{N} \|d_t^2 u^n\|_{k+1,\O}^2 \leq C \|D_{tt}u\|_{L^2(0,T;H^{k+1}(\Omega))}.
\label{fulldis:proof:5}
\end{equation}

Upon iterating \eqref{fulldis:proof:4} $n=2$ to $n$ and using discrete Gronwall
inequality and \eqref{fulldis:proof:5}, and for sufficiently small values of $S$ ($C~S <1$)
where $C~S$ denote the coefficient of the sum $\sum_{j=1}^{n} \|\psi^j\|_{2,\Omega}$, we derive
\begin{equation*}
\begin{split}
 \|\psi^{n} \|_{2,\Omega} &\leq C\Big( \Big \|\frac{\psi^1-\psi^0}{\Delta t} \Big \|_{0,\Omega}+\|\psi^0\|_{2,\Omega}+\|\psi^1\|_{2,\Omega} \Big)+ C h^{k} \Big(|g|_{L^{\infty}(0,T;H^{k+1}(\Omega))} +\|D_{tt}u\|_{L^2(0,T;H^{k+1}(\Omega))}\\
& \quad+\|D_{t}u\|_{L^2(0,T;H^{k+1}(\Omega))}+\|u\|_{L^{\infty}(0,T;H^{k+1}(\Omega))}\Big)+C\Delta t^2 \Big(\|D_{ttt} u\|_{L^{\infty}(0,T;L^2(\Omega))}\\
& \quad + \|D_{tt} u\|_{L^{\infty}(0,T;L^2(\Omega))} \Big). 
\end{split}
\end{equation*}

By using the assumption of Theorem~\ref{fulldis:mainth} and approximation property of $R_h$
at time $t=t_n$ (cf. Lemma~\ref{lemma:engy:pro}), approximation property of interpolation operator $I_h$, we obtain the intended result.
\begin{equation*}
\|U_h^n-u^n\|_{2,\Omega}\leq C(h^{k-1}+\Delta t^{2}).
\end{equation*}
\end{proof}
\begin{remark} 
After simplifying equation~(4.68), the coefficient involving the term $\|\frac{\psi^n-\psi^{n-1}}{\Delta t}\|^2_{0,\Omega}$ contains $\widetilde{\gamma_{\ast}},\delta, \widetilde{\alpha^{\ast}}$ and $\Delta t$. The coefficient involving the term $\|\frac{\psi^n-\psi^{n-1}}{\Delta t}\|^2_{0,\Omega}$ contains $\widetilde{\gamma_{\ast}},\delta, \widetilde{\alpha^{\ast}}$. Therefore, after iterating the inequality (4.68) $n=2$ to $n$, the coefficient of $ \sum_{j=2}^n\|\frac{\psi^j-\psi^{j-1}}{\Delta t}\|^2_{0,\Omega}$ contains term involving $\widetilde{\gamma_{\ast}}^{n-1},\delta^{n-1}, \widetilde{\alpha^{\ast}}^{n-1}$ and $\Delta t^{n-1}$ and the coefficient of $\|\frac{\psi^{1}-\psi^{0}}{\Delta t}\|^2_{0,\Omega}$ contains $\widetilde{\gamma_{\ast}}^{n-1},\delta^{n-1}, \widetilde{\alpha^{\ast}}^{n-1}$. Both the coefficients are bounded and by using discrete Gronwall inequality, we can achieve the desired result.
\end{remark}

In Section~\ref{section:linearized}, we have proposed linearized scheme~\eqref{linear:discrete:model}
and  highlighted that the scheme provides optimal order of convergences for both space and time variables.
Next, we proceed to prove in the following theorem
\begin{theorem}
Let $u \in H_{\ast}^{2}(\Omega)$ be the solution of \eqref{weak_sol} and let $\widetilde{U_h^{n}} \in \mathcal{Z}_h$
be the solution of \eqref{linear:discrete:model} at time $t=t_n$, where $2 \leq n \leq N$.
Then, under the assumptions of Theorem~\ref{fulldis:mainth}, there exists a constant $C$
which  depends on mesh regularity parameter $\gamma$, Sobolev regularity of $u$,
stability parameters of bilinear forms $\mathcal{A}_h(\cdot,\cdot)$, and $m_h(\cdot,\cdot)$
but independent of mesh size $h$ and time-step $\Delta t$ such that the following estimation holds  
\begin{equation*}
\begin{split}
\|\widetilde{U^{n}_h}-u^n\|_{2,\Omega} &\leq C \left( \Big \|\frac{\psi^1-\psi^0}{\Delta t} \Big \|_{0,\Omega}
+  \|\psi^0 \|_{2,\Omega}+  \|\psi^1 \|_{2,\Omega} \right )+ C h^{k-1} \Big(|u_0|_{k+1,\O}+|g|_{L^{\infty}(0,T;H^{k+1}(\Omega))} \\
& \quad+\|D_{tt}u\|_{L^2(0,T;H^{k+1}(\Omega))}+\|D_{t}u\|_{L^2(0,T;H^{k+1}(\Omega))}+\|u\|_{L^{\infty}(0,T;H^{k+1}(\Omega))}\Big)\\
& \quad+C\Delta t^2 \Big(\|D_{ttt} u\|_{L^{\infty}(0,T;L^2(\Omega))} + \|D_{tt} u\|_{L^{\infty}(0,T;L^2(\Omega))}+\|D_{t} u\|_{L^{\infty}(0,T;L^2(\Omega))} \Big). 
\end{split}
\end{equation*}
\end{theorem}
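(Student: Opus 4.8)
The plan is to reproduce the energy argument of Theorem~\ref{fulldis:mainth} almost verbatim, the only genuinely new ingredient being the control of the time-lagged nonlocal coefficient $\mathcal{C}(\widetilde{U_h^{n-2}})$. I would keep the splitting $u^n-\widetilde{U_h^n}=\varphi^n-\psi^n$ with $\varphi^n:=u^n-R_hu^n$ and $\psi^n:=R_hu^n-\widetilde{U_h^n}$; the term $\varphi^n$ is bounded directly by the Ritz estimates of Lemma~\ref{lemma:engy:pro}, so everything reduces to estimating $\psi^n$ in the $\|\cdot\|_{2,\O}$ norm. Subtracting the linearized scheme \eqref{linear:discrete:model} from \eqref{weak_sol} taken at $t=t_n$ and using the definition \eqref{ritz:projection} of $R_h$, I obtain an error identity for $\psi^n$ whose right-hand side splits into the load-consistency term, the mass and damping truncation terms, the biharmonic consistency term, and the nonlocal residual. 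The first three groups are identical to those in \eqref{fulldis:proof:2}--\eqref{fulldis:proof:3} and are bounded in the same way: the projection estimates for $\Pi^k_E$ and $R_h$ yield the $O(h^{k+1})$ spatial contributions, while Taylor expansion of the centred differences $d_t^2u^n$ and $\partial_tu^n$ about $t_{n-1}$ yields the $O(\Delta t^2)$ temporal contributions carrying $\|D_{ttt}u\|$ and $\|D_{tt}u\|$.

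The decisive step is the nonlocal residual, where the coefficient is frozen at $t_{n-2}$. I would write it as $\mathcal{C}(\widetilde{U_h^{n-2}})\,a_h^x(R_hu^n,v_h)-\bigl[S\|D_xu^n\|_{0,\O}^2-P\bigr]a^x(u^n,v_h)$ and split it into $\bigl[\mathcal{C}(\widetilde{U_h^{n-2}})-(S\|D_xu^n\|_{0,\O}^2-P)\bigr]a_h^x(R_hu^n,v_h)$ plus $\bigl(S\|D_xu^n\|_{0,\O}^2-P\bigr)\bigl[a_h^x(R_hu^n,v_h)-a^x(u^n,v_h)\bigr]$; the second piece is the form-consistency error already shown to be $O(h^k)$ in \eqref{proof:nonlocal:3}. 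For the coefficient difference I would interpolate through $\widetilde{U_h^{n-2}}\to R_hu^{n-2}\to u^{n-2}\to u^n$, writing $\mathcal{C}(\widetilde{U_h^{n-2}})-(S\|D_xu^n\|_{0,\O}^2-P)=[\mathcal{C}(\widetilde{U_h^{n-2}})-\mathcal{C}(R_hu^{n-2})]+[\mathcal{C}(R_hu^{n-2})-(S\|D_xu^{n-2}\|_{0,\O}^2-P)]+S[\|D_xu^{n-2}\|_{0,\O}^2-\|D_xu^n\|_{0,\O}^2]$. The first bracket is controlled by the Lipschitz estimate \eqref{Lipschitz:nonlocal} and produces $\|D_x\psi^{n-2}\|_{0,\O}$, an error quantity to be absorbed by discrete Gronwall; the second bracket is $O(h^k)$ exactly as in \eqref{proof:nonlocal:2}; and the third bracket, represented through $\|D_xu^n\|_{0,\O}^2-\|D_xu^{n-2}\|_{0,\O}^2=\int_{t_{n-2}}^{t_n}\frac{\dr}{\dr t}\|D_xu\|_{0,\O}^2\,\dr t$, is the source of the additional $\|D_tu\|_{L^{\infty}(0,T;L^2(\O))}$ term that distinguishes this estimate from the fully-discrete one.

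With every group estimated, I would test the error identity with $v_h:=\partial_t\psi^n$, so that the symmetric forms $m_h(\cdot,\cdot)$, $\mathcal{A}_h(\cdot,\cdot)$ and the frozen nonlocal form $\mathcal{C}(\widetilde{U_h^{n-2}})\,a_h^x(\psi^n,\cdot)$ telescope, producing on the left a discrete kinetic energy $\|\tfrac{\psi^n-\psi^{n-1}}{\Delta t}\|_{0,\O}^2$, the damping dissipation $\delta\TGstr\Delta t\|\partial_t\psi^n\|_{0,\O}^2$ and $\tfrac{\TAstr}{4}\|\psi^n\|_{2,\O}^2$, precisely as in \eqref{fulldis:proof:4}; here I would use the stability of $m_h(\cdot,\cdot)$ and $\mathcal{A}_h(\cdot,\cdot)$ (Lemma~\ref{stability}), the continuity \eqref{continuous:ax} of $a_h^x(\cdot,\cdot)$, the hypothesis $P\in(0,\TAstr\lambda_1/2)$ and the third inequality in \eqref{euthdgfk}. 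The $H^1$-seminorm factors $\|D_x\partial_t\psi^n\|_{0,\O}$ arising from the nonlocal and continuity bounds are handled by Young's inequality with a parameter chosen small relative to $S$ and $P$ and absorbed into the damping dissipation, exactly as in the remark following \eqref{semi_dis:eqn71}; the $O(\Delta t)$ size of the third bracket, coupled through $\sqrt{\Delta t}\,\partial_t\psi^n$ to that same dissipation term and then summed with the $\Delta t$-weighting, is what upgrades it to the stated $O(\Delta t^2)$ order. Iterating from $j=2$ to $n$, invoking $\Delta t\sum_{n}\|d_t^2u^n\|_{k+1,\O}^2\le C\|D_{tt}u\|_{L^2(0,T;H^{k+1}(\O))}^2$ and the discrete Gronwall inequality---licit once $S$ is small enough that the coefficient of $\sum_j\|\psi^j\|_{2,\O}^2$ stays below one, as in Theorem~\ref{fulldis:mainth}---bounds $\|\psi^n\|_{2,\O}$, and combining with the Ritz bound for $\varphi^n$ and the starting data $\|\tfrac{\psi^1-\psi^0}{\Delta t}\|_{0,\O},\|\psi^0\|_{2,\O},\|\psi^1\|_{2,\O}$ yields the claim. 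The main obstacle is the nonlocal lag: one must verify that freezing the coefficient one step back costs only $O(\Delta t^2)$ and not $O(\Delta t)$ in the final $H^2$ estimate, which is exactly what the controlled Young coupling of the third bracket to the damping term $\delta\TGstr\Delta t\|\partial_t\psi^n\|_{0,\O}^2$ is designed to deliver.
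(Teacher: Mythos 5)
Your proposal follows essentially the same route as the paper: the paper's own proof consists precisely of invoking the argument of Theorem~\ref{fulldis:mainth} verbatim and replacing the nonlocal estimate by a bound of the form $C\bigl(h^{k}|u^{n}|_{k+1,\O}+h^{k}|u^{n-2}|_{k+1,\O}+\|\widetilde{\psi^{n-2}}\|_{2,\O}+\Delta t\,\|D_{t}u^{n-1}\|_{0,\O}\bigr)\|D_{x}v_h\|_{0,\O}$, which is exactly what your decomposition $\widetilde{U_h^{n-2}}\to R_hu^{n-2}\to u^{n-2}\to u^{n}$ produces term by term. Your additional discussion of how the $O(\Delta t)$ lag term is coupled to the damping dissipation and summed to yield the stated $O(\Delta t^{2})$ order is a more explicit justification of a step the paper leaves implicit, but it is not a different method.
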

\begin{proof}
The proof follows analogous arguments with a minor modification of estimates of  nonlocal  term.
In fact, following \cite[Theorem~5.3]{AN-MCS2020} and the errors due to nonlocal term can be bounded as
\begin{equation*}
\begin{split}
&-\Big[S \sum_{E \in \O_h}\int_{E}|\Pi^{k-1}_E D_x \widetilde{U_h^{n-2}}|^2-P \Big] a_h^x(R_hu^{n},v_h)+\Big[S \sum_{E \in \O_h}\int_{E} |D_xu^{n}|^2-P \Big] a^x(u^{n},v_h)\\ 
& \leq C~ \Big(h^{k}~|u^{n}|_{k+1,\O}~\|D_x v_h\|_{0,\O}+h^k~|u^{n-2}|_{k+1,\O}~\|D_x v_h\|_{0,\O}+\|\widetilde{\psi^{n-2}}\|_{2,\O}~\|D_x v_h\|_{0,\O} \\
& \quad +\Delta t \|D_t u^{n-1}\|_{0,\O} \|D_x v_h\|_{0,\O} \Big ).
\end{split}
\end{equation*}
 Proceeding same as Theorem~\ref{fulldis:mainth}, we obtain the intended result.
\end{proof}

\section{Numerical experiments}
\label{Numerics:VEM}
In this section, we would like to demonstrate the performance of the proposed method
for the lowest order $C^1$ conforming virtual element space, i.e. for polynomial of degree $k=$ 2.
We have studied two test cases where the first example deals with manufactured solution and
another case which is focused on more realistic example. We have computed the numerical
solutions on different type of meshes including smoothed Voronoi, regular polygons, non-convex,
distorted square and square meshes (see Figure~\ref{fig:mesh}). The time dependent nonlocal plate equation models
the deformation of bridges. The nonlocal nonlinearity appears in the model problem~\eqref{modl_prob:1}-\eqref{modl_prob:5}
due to the stretching of the plate in the $x$-direction. The function~$g$
represents the vertical load over the plate. In \cite{CCCHSV-jFI2020},
authors have focused to study the decay of the energy with the time progression
surveying some practical examples with or without the presence of an external
load function $g$. However, in a fully discrete form, the model problem reduces
to a system  of nonlinear equations, which has to be solved numerically.
Traditional techniques based on FEM are expensive, since the
it requires $C^1$ elements and the presence of the nonlocal term destorys
the sparse structure of the Jacobian of the nonlinear system (Newton's Method).
To avoid these difficulties, we have introduced a new independent variable and
maintained the sparse structure of the Jacobian as shown in Figure~\ref{fig:examFig1jacstruc}.
All the Jacobian matrices are computed for meshes of 16$\times$16 elements for square,
distorted square, regular polygons and non-convex meshes and for Voronoi mesh with mesh size $h=1/10$.

\paragraph{Aspect of implementation of projection operator $\boldsymbol{\Pi^{k-1}_E}$.}
In \eqref{full:discrete:model}, we have discretized the nonlocal term using
the projection operator $\Pi^{k-1}_E$ which is computable form the degrees
of freedom $D1-D5$ for any order of $k \geq 2$. Further, we explain
briefly the computation of the projection operator $\Pi^{k-1}_E$ for arbitrary order $k \geq 3$ as follows
\begin{equation}\label{computyhgf}
\begin{split}
\int_E \Pi^{k-1}_E D_x \phi~ q& =\int_E D_x \phi~ q \qquad \qquad \qquad \qquad \forall q \in \mathbb{P}_{k-1}(E) \\
&=-\int_E \phi ~D_x q +\int_{\partial E} \phi~ n_x q, 
\end{split}
\end{equation}
where $n_x$ is a $x$ component of unit outward normal vector. The term $\int_{\partial E} \phi\  n_x ~q$
consists of an integral of a polynomial of degree $2k-1$. Since the virtual function $\phi$ is polynomial of degree $k$ on edge 
$e \subset \partial E$ and is explicitly computable from the degrees of freedom
associated with the discrete space. The another function $q$ is a known polynomial of degree $k-1$ and
hence the integration is fully computable from degrees of freedom $D1-D4$.
For the case $k=$ 2, the virtual function $\phi|_{e} \in \mathbb{P}_3(e)$;
consequently, we have four unknown coefficients which can be computed
from the four DoFs $D1-D2$.

\paragraph{Stabilization bilinear forms.}
To complete the choice of the VEM scheme for $k=2$,
we had to fix the forms $S_{\Delta}^{E}(\cdot,\cdot)$ and $S_{m}^{E}(\cdot,\cdot)$ (cf. Section~\ref{bili-formdfg}).
In particular, we have considered the form
\begin{align*}
S^{E}(\omega_h,v_h):=\sum\limits_{i=1}^{N_{E}}
[\omega_h(\Xi_i)v_h(\Xi_i)
+\nabla \omega_h(\Xi_i)\cdot\nabla v_h(\Xi_i)] & \qquad \forall \omega_h,v_h\in \ZK,
\end{align*}
where $\Xi_1,\ldots,\Xi_{N_{E}}$ are the vertices
of $E$.
Thus, we take $S_{\Delta}^{E}(\cdot,\cdot)$ and $S_{m}^{E}(\cdot,\cdot)$
in terms of $S^{E}(\cdot,\cdot)$, properly scaled
(see \cite{BM13} for further details).

\begin{figure}[!t]
  \begin{center}
    \begin{tabular}{cc}
      \includegraphics[width=0.45\textwidth]{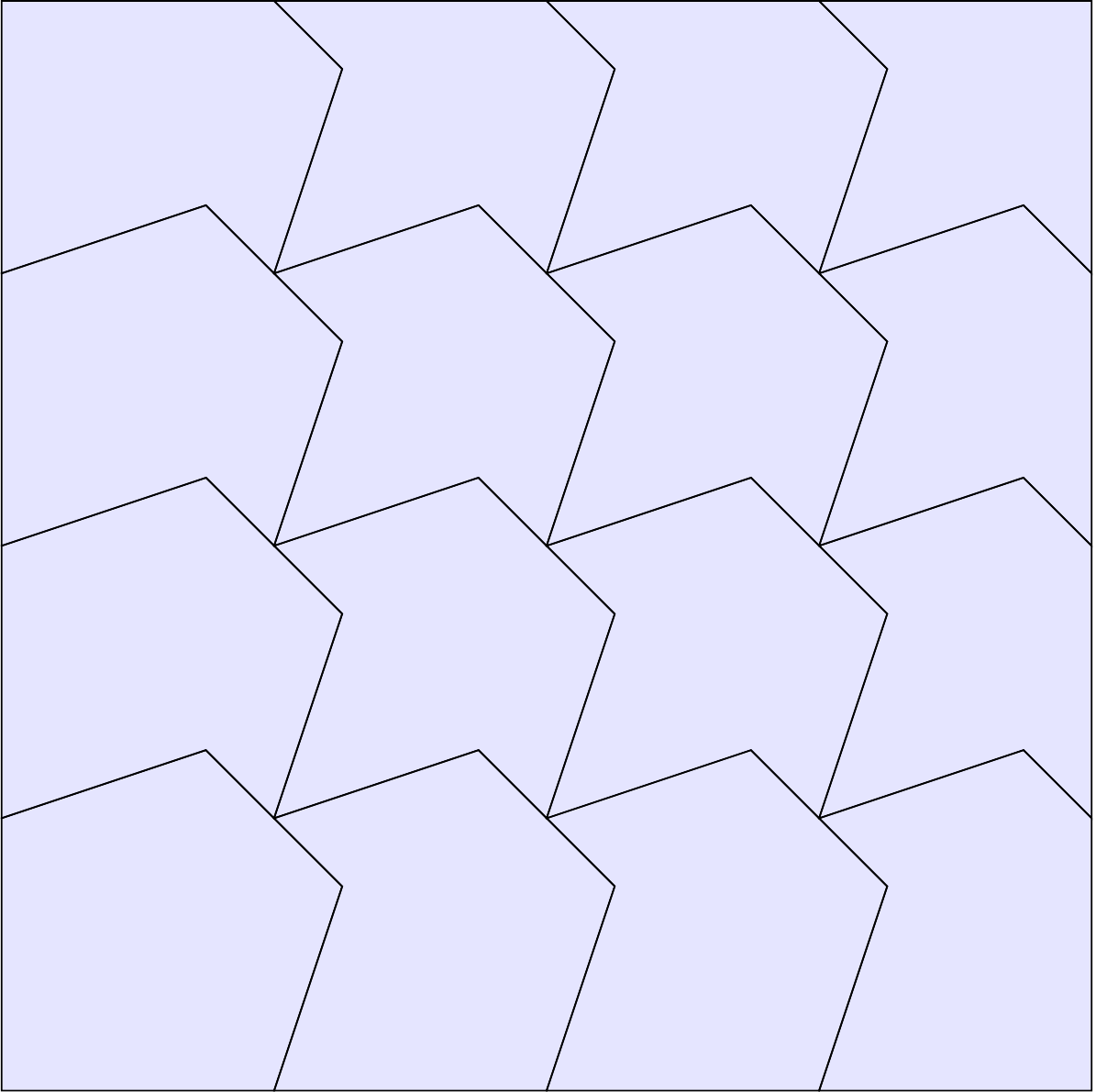} & \qquad
      \includegraphics[width=0.45\textwidth]{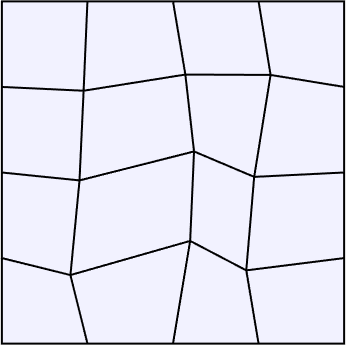} \\[0.5em]
      (a) Non-convex & (b)Distorted square \\ [1em]
      \includegraphics[width=0.45\textwidth]{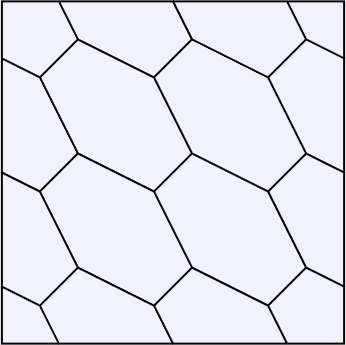} & \qquad
      \includegraphics[width=0.45\textwidth]{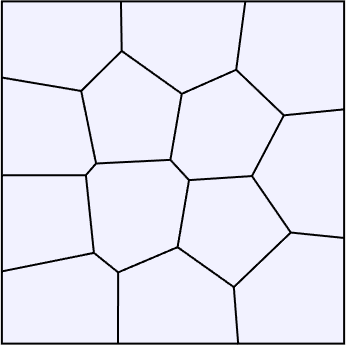} \\[0.5em]
      (c) Regular polygons & (d) Smoothed Voronoi
    \end{tabular}
  \end{center}
  \vspace{-0.25cm}
  \caption{A schematic representation of different discretizations
    employed in this study.}
  \label{fig:mesh}
\end{figure}

\subsection{Example~1}

First, we have considered a clamped plate with a manufactured solution.
We consider the model problem~\eqref{modl_prob:1} with the exact solution
$u(x,y,t):=\sin(\pi t)(x-x^2)^2(y-y^2)^2$ in $\O:=(0,1)^2$
and $u=\partial_{\bold{n}} u=0$ on $\partial\O$.
The damping coefficient $\delta$ is chosen as $1$ and the pre-stressing
constant $P$ and the elasticity of the material $S$ are chosen as $10^{-3}$ and $10^{-5}$, respectively.
The final time $T$ is chosen as $1/2$. Initial guess $U_h^0$ is considered as zeros
and $U_h^1$ is computed using the finite difference formula $\frac{U_h^1-U_h^0}{\Delta t}=\frac{d u}{dt}(0)$.
The time steps are chosen as sufficiently small to achieve the optimal rate of convergence in space variable.
The errors are computed using the formula
\begin{equation}
\mathcal{E}_2(u)=\Big (\sum_{E \in \O_h}|u^n-\PiK U_h^n|_{2,E}^2 \Big )^{1/2}.
\end{equation}
However, one can also compute the errors using the relative error formula such as
\begin{equation}
\mathcal{E}_{\text{rel}}(u):=\frac{\mathcal{A}_h(u^n-U_h^n,u^n-U_h^n)}{\mathcal{A}_h(u^n,u^n)}.
\end{equation}
We have displayed the solutions with the family of meshes containing
4$\times$ 4, 8$\times$8, 16$\times$16, 32$\times$32, and 64$\times$64 elements.
In Figure~\ref{fig:examFig1jacstruc}, we have shown the sparse structure of Jacobians
and  condition numbers of the Jacobians are shown in Figure~\ref{fig:exam1_jac},
where it is seen that the condition numbers increase as $\approx O(h^4)$.
Further, the nonlocal term is discretized using the $L^2$ projection
operator $\Pi^{k-1}_E$ on each element $E$ (cf. \eqref{computyhgf}). Thus,
we can compute the term $\Pi^{k-1}_E D_x\phi_i$.
In the discretization of $a^x(\cdot,\cdot)$, we have considered only polynomial
part avoiding non-polynomial part or stabilization part. However, we have proved
theoretically that the fully-discrete scheme~\eqref{full:discrete:model}-\eqref{full:discrete:initial}
is well posed and converges optimally in both space and time variables.
In particular, let $(\bold{A}^x)_{ij}=\sum_{E \in \O_h}\int_{E} \Pi^{k-1}_E D_x \phi_i~\Pi^{k-1}_E D_x \phi_j {\rm d}E$,
then the nonlocal term could be computed as
$\sum_{E \in \O_h}\int_{E} |\Pi^{k-1}_E D_x U_h^n|^2 {\rm d} E= [\boldsymbol{\eta}^n] \bold{A}^x [\boldsymbol{\eta}^n]^T$,
where $[\boldsymbol{\eta}^n]$ is the coefficient vector defined in \eqref{discrete:sol:explicit}.
In Figure~\ref{fig:exam1L2conv}, we have displayed the convergence behaviour for
different type of meshes for nonlinear scheme. It is inferred that the proposed
framework yields optimal convergence in the $H^2$-norm.

On the other hand, we mention that the linearized scheme~\eqref{linear:discrete:model}-\eqref{linear:discrete:initial}
where the nonlinear term is computed at previous step. Thus, the fully-discrete linearized scheme
reduces to system of linear equations, we can employ any linear solver to compute
the system of equations and we dare to leave the scheme without verifying experimentally.

\begin{figure}[htpb]
\centering 
\subfigure[Distorted square]{\includegraphics[scale=0.5]{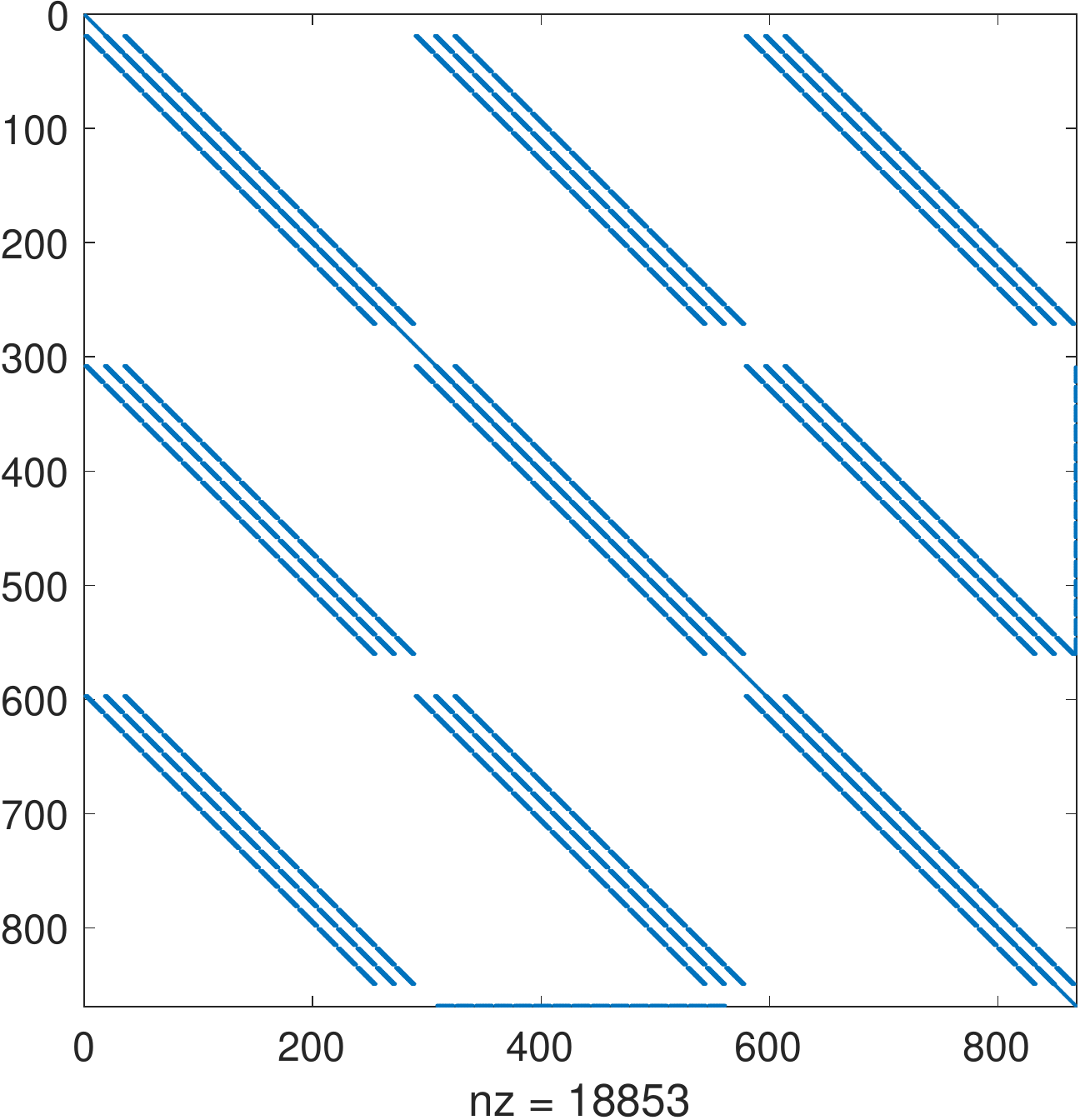}} 
\subfigure[Non-convex]{\includegraphics[scale=0.5]{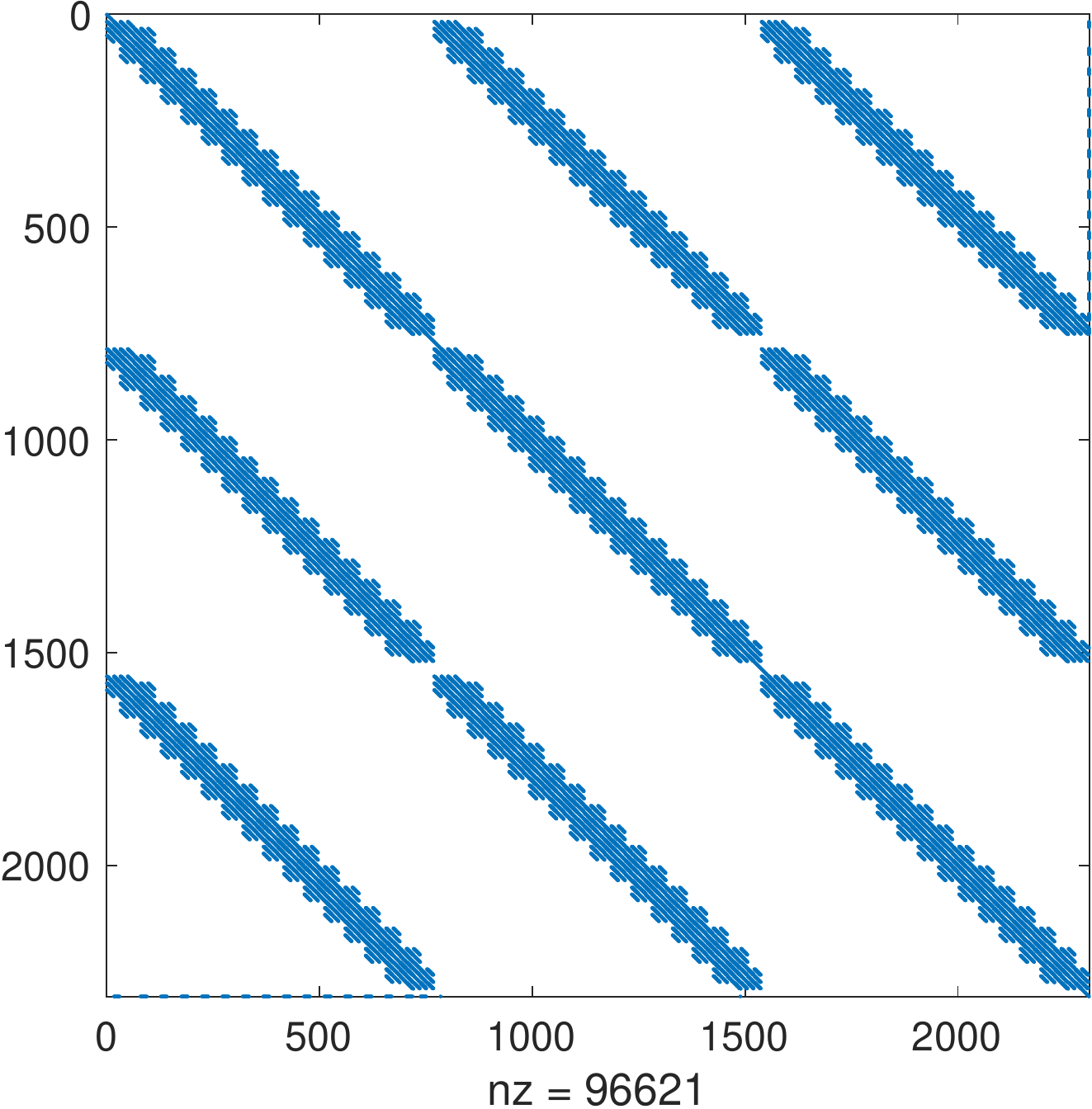}}\\
\subfigure[Square]{\includegraphics[scale=0.5]{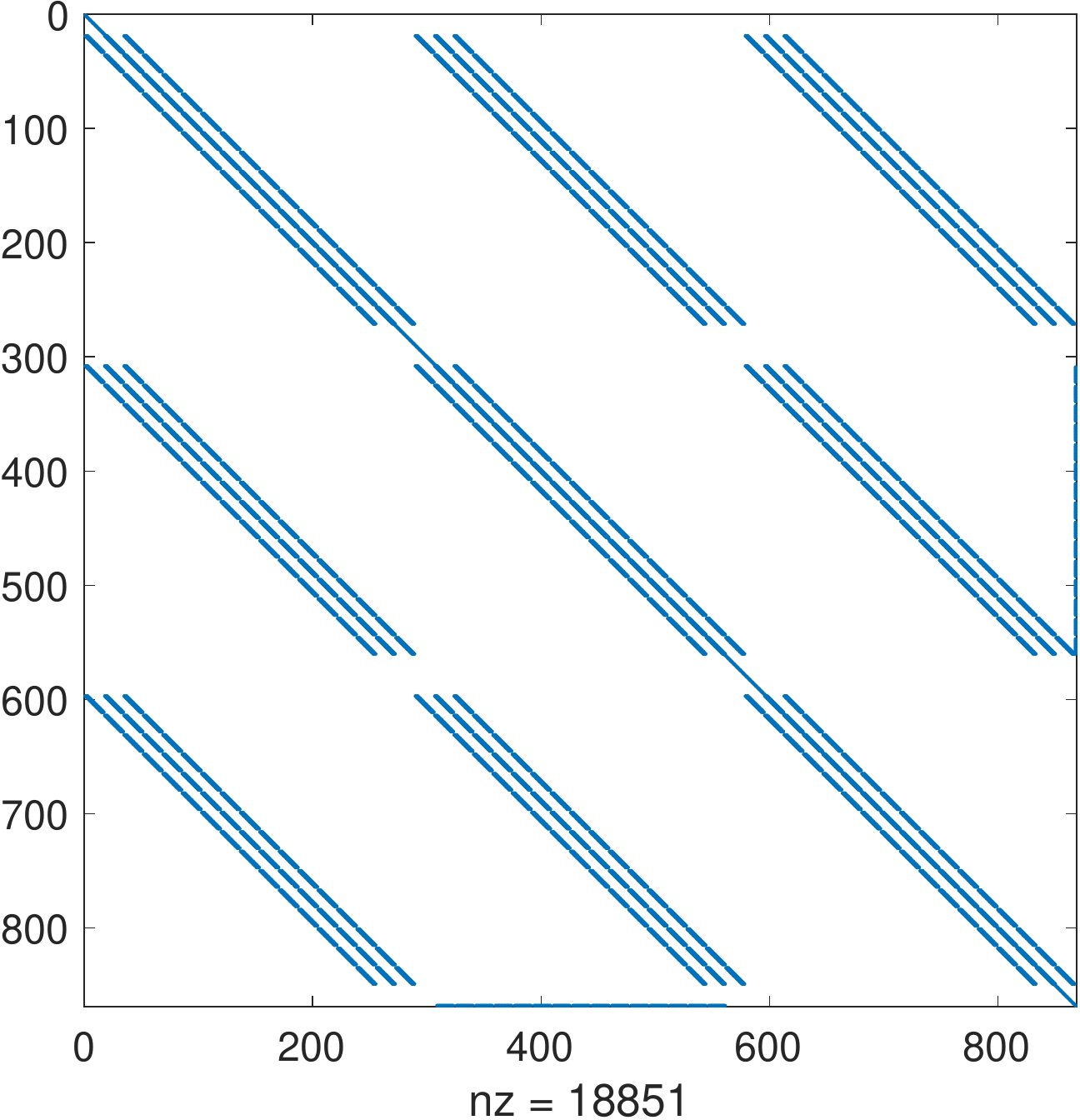}} 
\subfigure[Uniform polygon]{\includegraphics[scale=0.5]{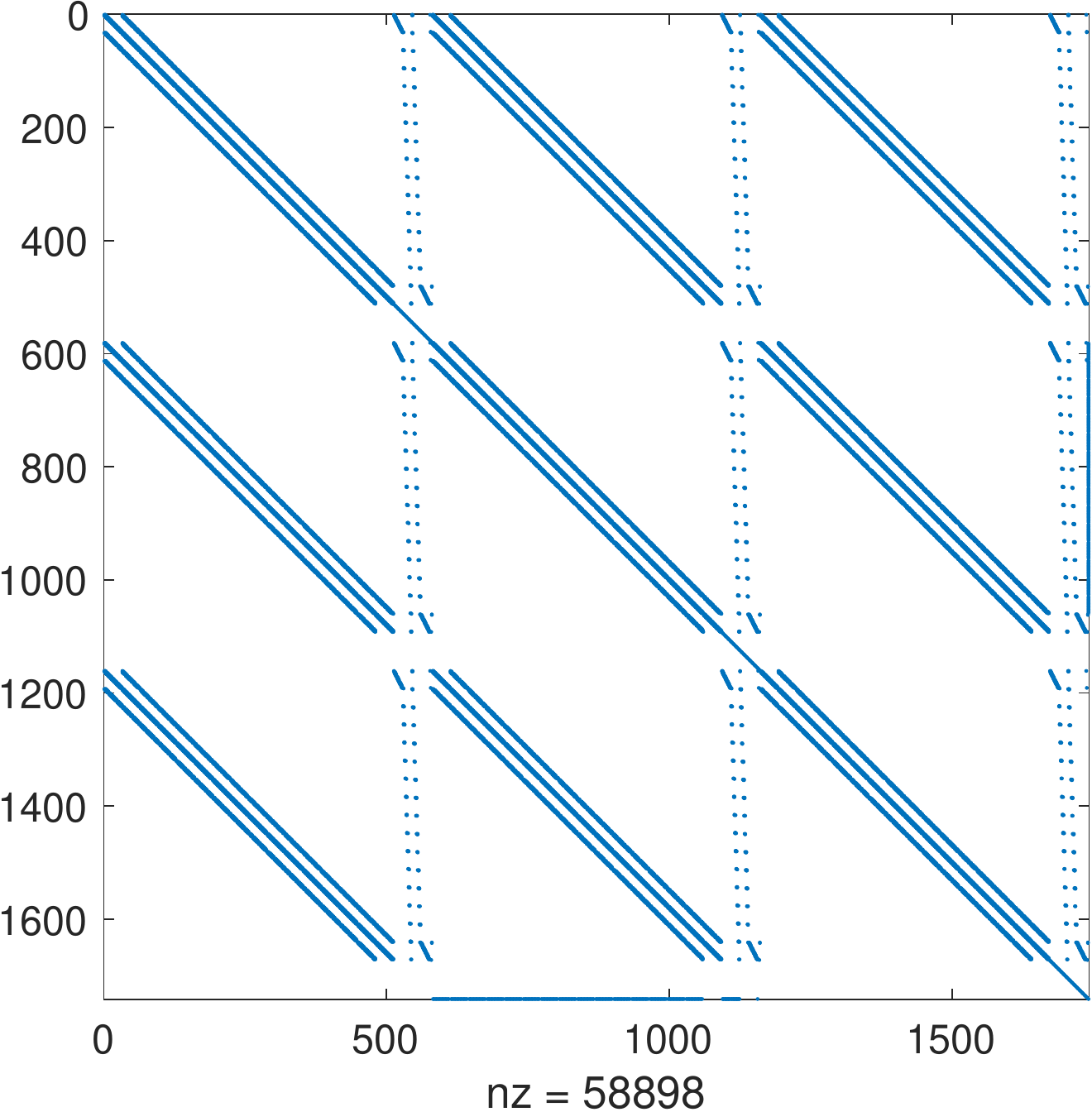}}
\subfigure[Voronoi]{\includegraphics[scale=0.5]{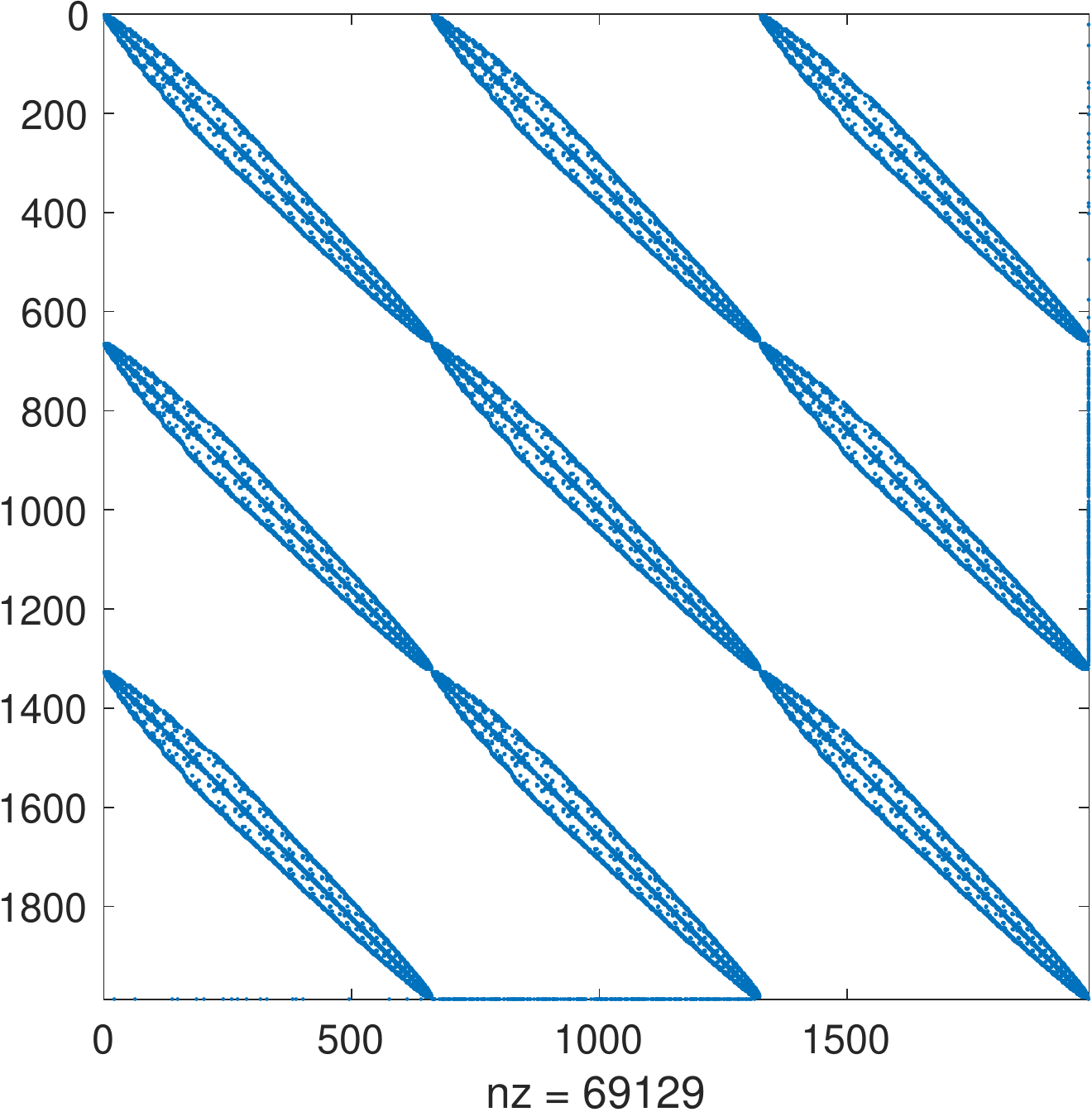}}
\caption{Example 1: Sparsity of Jacobian for different discretization considered in this analysis.} 
\label{fig:examFig1jacstruc}
\end{figure}

\begin{figure}[htpb!]
\centering 
\subfigure[]{\includegraphics[scale=0.65]{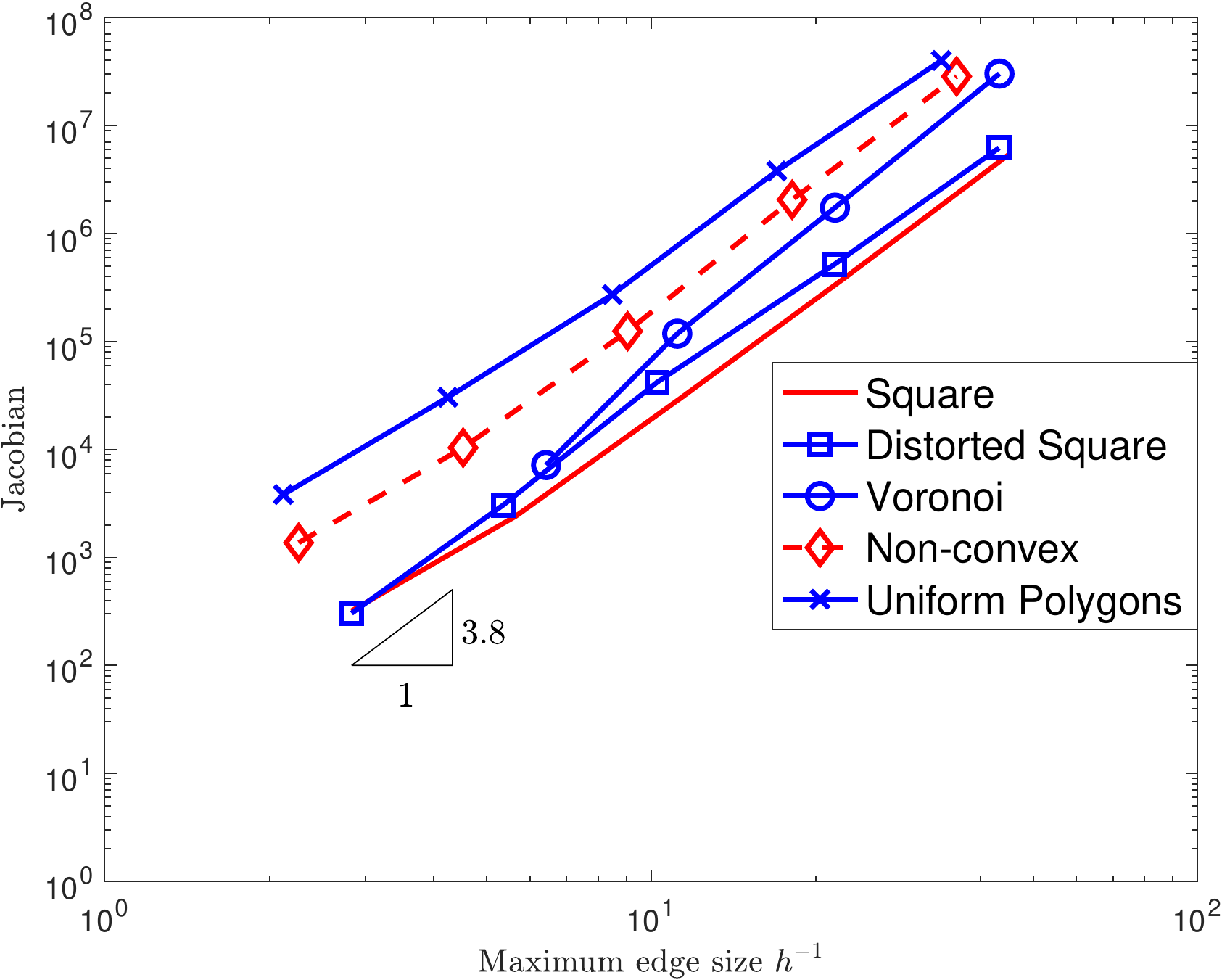}\label{fig:exam1_jac}}
\subfigure[]{\includegraphics[scale=0.65]{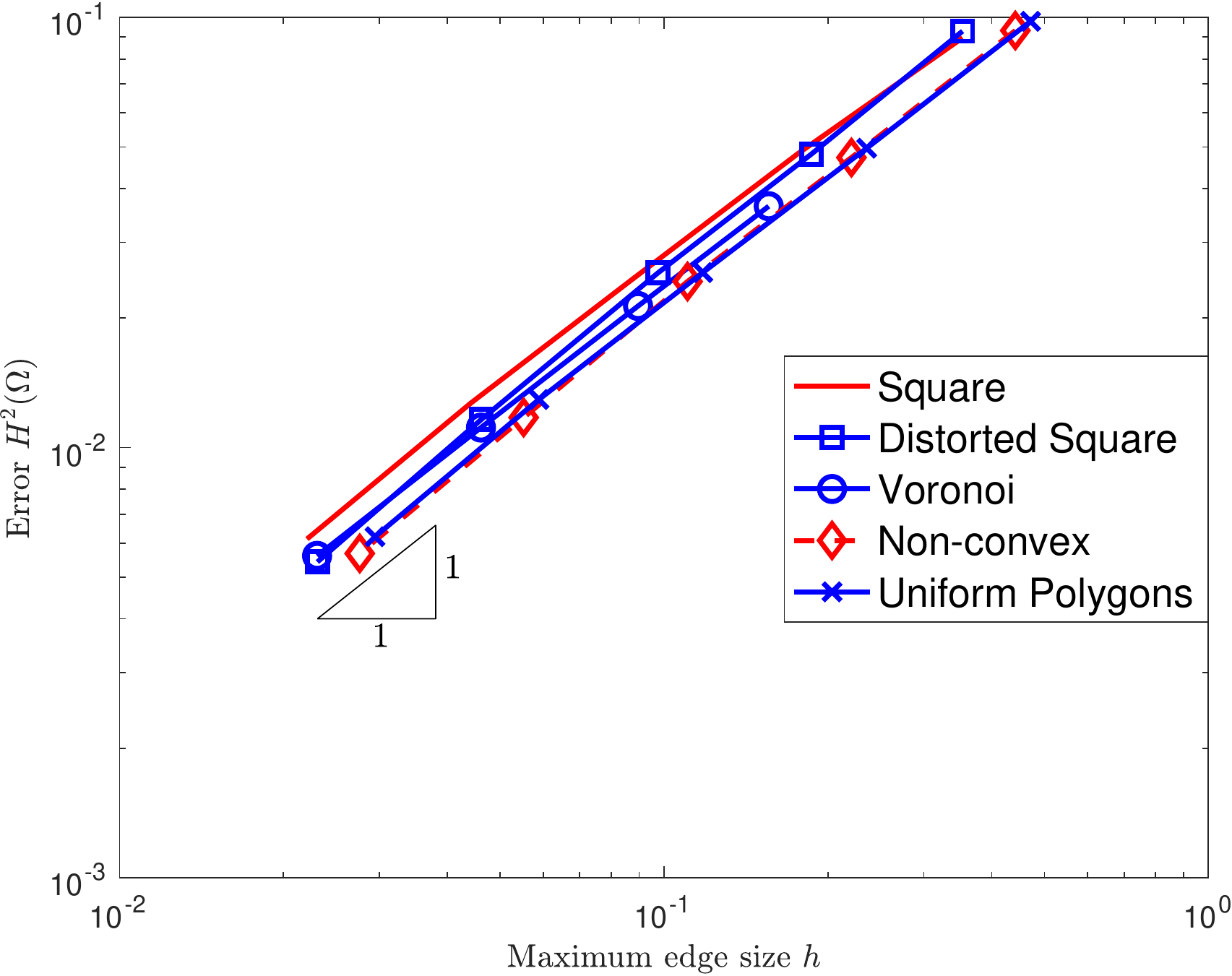}\label{fig:exam1L2conv}}
\caption{Example 1: (a) conditioning number of the Jacobian matrix and (b) convergence of the  error in the $H^2$ norm with mesh refinement.} 
\label{fig:examFig1}
\end{figure}

\subsection{Example~2}

In this section, we have borrowed a more realistic example that models the deformation of a real bridge
\cite[Numerical Example~5.4]{CCCHSV-jFI2020}.  We consider $g=0$, $P=10^{-3}$, $S=10^{-5}$, $\sigma=0.2$.
The computational domain is considered as $[0,\pi]\times [-\ell,\ell]$, where $\ell=\pi/150$.
The initial guess $U^0_h$ is chosen as the solution of the following stationary problem:
\begin{align*}
& \Delta^2 u=50 \sin(2x) \quad \quad \text{in} ~\O,\\
&u(0,y)=D_{xx} u(0,y)=u(\pi,y)=D_{xx} u(\pi,y)=0 \quad y \in (-\ell,\ell),\\
& D_{yy} u(x,\pm \ell)+\sigma D_{xx}u(x,\pm \ell)=0, \quad  x \in (0,\pi),  \\
& D_{yyy} u(x,\pm \ell)+(2-\sigma) D_{xxy}u (x,\pm \ell)=0, \quad x \in (0, \pi).
\end{align*}
Further, we choose the immediate next approximation $U_h^1=U_h^0$ ($\omega_0=0$). 
\begin{figure}[htpb!]
\centering
\includegraphics[scale=0.45]{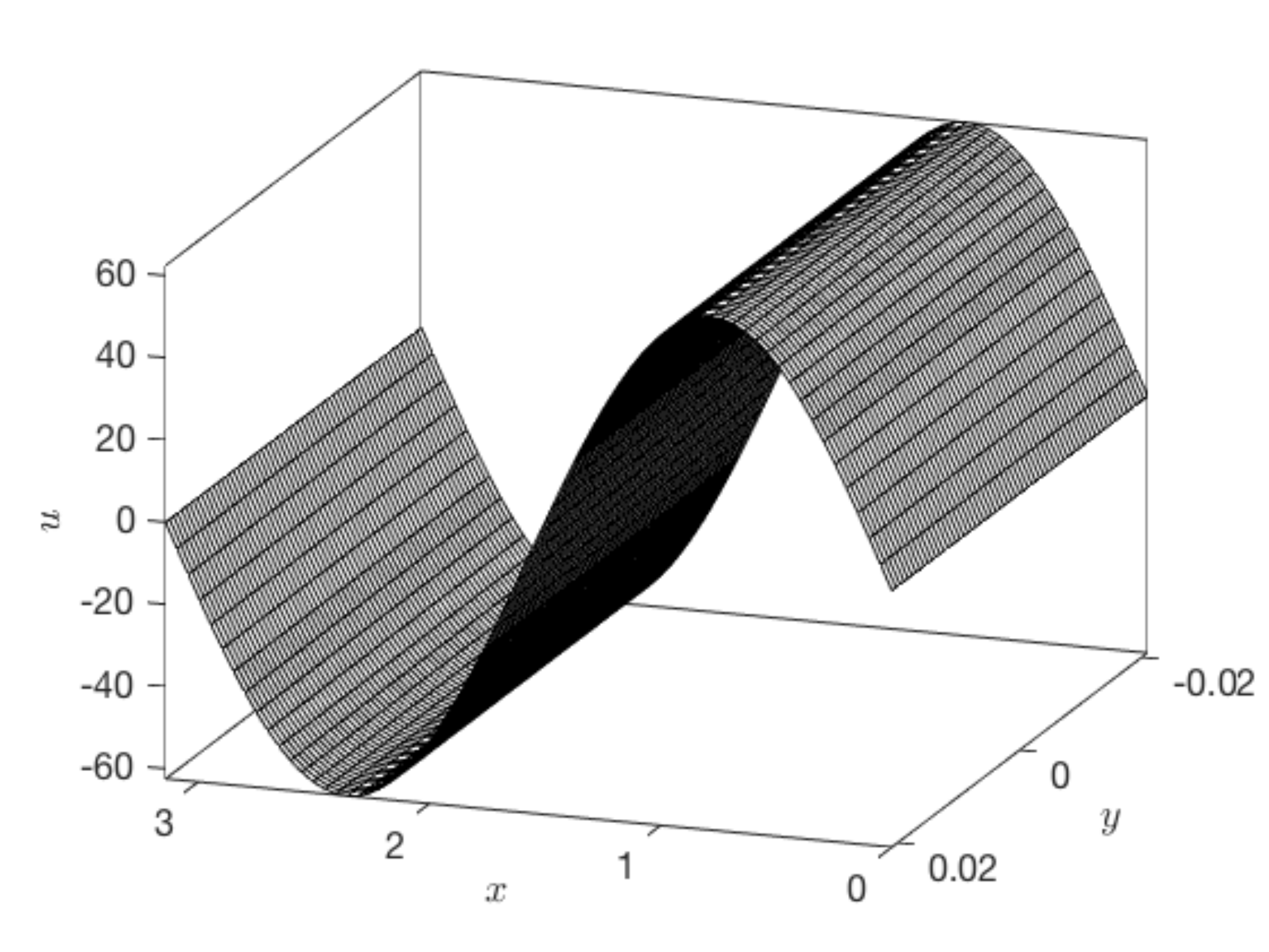}
\caption{Example 2: Initial guess $U^0_h$}
\label{fig:example2_initial_guess}
\end{figure}
Figure~\ref{fig:example2_initial_guess}, shows the initial guess $U^0_h$
which is basically the initial deformation at time $t=0$.  It is noted
that the damping coefficient $\delta(x,y)$ plays a significant role in
decaying the energy as time $t$ goes to infinity, which should be positive
($>0$) in the neighbourhood of $\widetilde{\O}:=(0,10h)\cup (\pi-10h,\pi) \times (-\ell, -\ell+5h) \cup (\ell-5h,\ell)$,
and zero in $\O \setminus \widetilde{\O}$. We choose 
\begin{equation*}
\delta(x,y)=
\begin{cases}
& 1, \quad  \quad (x,y) \in \widetilde{\O} \\
&0, \quad \quad (x,y) \in \O \setminus \widetilde{\O}. 
\end{cases}
\end{equation*}
For the computation, the final time $T$ is chosen as $0.01$. We have computed numerical
results at final time and the convergence of the $H^2$ errors with mesh refinement
are shown in Figure~\ref{fig:examFig2}. Time-step is taken as $\Delta t=O(h)$.
Further, the rate of convergence of the numerical solution is in accordance with
the theory as proved in Theorem~\ref{fulldis:mainth}. In addition, we also would
like to study the decay of energy as it is analyzed in \cite[Theorem~3.7]{CCCHSV-jFI2020}.
The energy is defined as
\begin{equation}
E_u(t):=\frac{1}{2} \|D_t u(t)\|_{L^2(\Omega)}^2+\frac{1}{2} \|u(t)\|^2_{H^2_{\ast}(\Omega)}-\frac{P}{2} \|D_x u(t)\|^2_{L^2(\O)}+\frac{S}{4} \|D_x u(t)\|^4_{L^2(\O)}, 
\end{equation}
where $t\geq 0$. It is estimated that $$ E_u(t) \leq S \Big( \frac{t}{T_0}-1\Big) \quad \forall t \geq T_0 >0,$$
and $\lim_{t \rightarrow \infty} S(t)=0$. 
In discrete version, the energy is defined by using the virtual element discrete solution as 
\begin{equation}
\begin{split}
\widetilde{E}_{U_h^n}(t)&:=\frac{1}{2} \sum_{E\in \O_h} m_h^E \left(\frac{U_h^n-U_h^{n-1}}{\Delta t},\frac{U_h^n-U_h^{n-1}}{\Delta t} \right )+\frac{1}{2} \sum_{E \in \O_h} \calA_h^E( U_h^n,U_h^n)\\
&\quad-\frac{P}{2} \sum_{E \in \Omega_h} \|\Pi^{k-1}_E D_x U_h^n\|^2_{0,E} +\frac{S}{4} \sum_{E \in \Omega_h}\|\Pi^{k-1}_E D_x U_h^n\|^4_{0,E},
\end{split} 
\end{equation}
We have post processed the results to recover the norm such as $ \sum_{E \in \O_h}\|\Pi^{k-1}_E D_x U_h^n\|_{0,E}^2$,
using the following discrete bilinear form:
\begin{equation}
\sum_{E \in \O_h} \|\Pi^{k-1}_E D_xU_h^n\|_{0,E}^2:=a_h^x(U_h^n,U_h^n),
\end{equation}
or, in matrix form,$$ \sum_{E \in \O_h} \|\Pi^{k-1}_E D_xU_h^n\|_{0,E}^2:=(\boldsymbol{\eta}^n)^T \bold{A}^x (\boldsymbol{\eta}^n),$$ 
where $\boldsymbol{\eta}^n$ and $\bold{A}^x$ are defined in Section~\ref{plate_implement}.
The energy is computed on square mesh for $16 \times 16$ elements with very small time step $\Delta t=1/1000$.
In Figure~\ref{fig:examp_energy}, we have plotted Energy versus final time $T$
and it is clearly observed that as $T \rightarrow \infty $, the energy $\widetilde{E}_{U_h^n} \rightarrow 0$. From \eqref{fig:examp_energy}, we deduce that the bridge reaches immobilized condition at time $T=5$ with initial deformation as defined in Example~2.
 Finally, we would also like to conclude the discussion by dissecting the Jacobian without introducing new variable as $\xi$. Recollecting \eqref{jacobian}, it can be observed that $(\bold{J})_{ij} \neq 0$ for $i \neq j$. The Jacobian is displayed in Figure~\ref{fig:examp2_jac}. The Jacobian is computed for the full matrix, without introducing the additional variable on a square mesh for Example~2. It is observed that the number of non-zeros is greater in this case when compared to the case with an additional variable. Further, it is also observed that the Newton iterations take relatively more iterations without inviting the new additional variable and the number of non-zeros in Jacobian increases as the mesh size, $h$ approaches 0. It is clear from this discussion, that the presented framework is advantageous in spite of having an additional variable.

\begin{figure}[htpb]
\centering 
\includegraphics[scale=0.65]{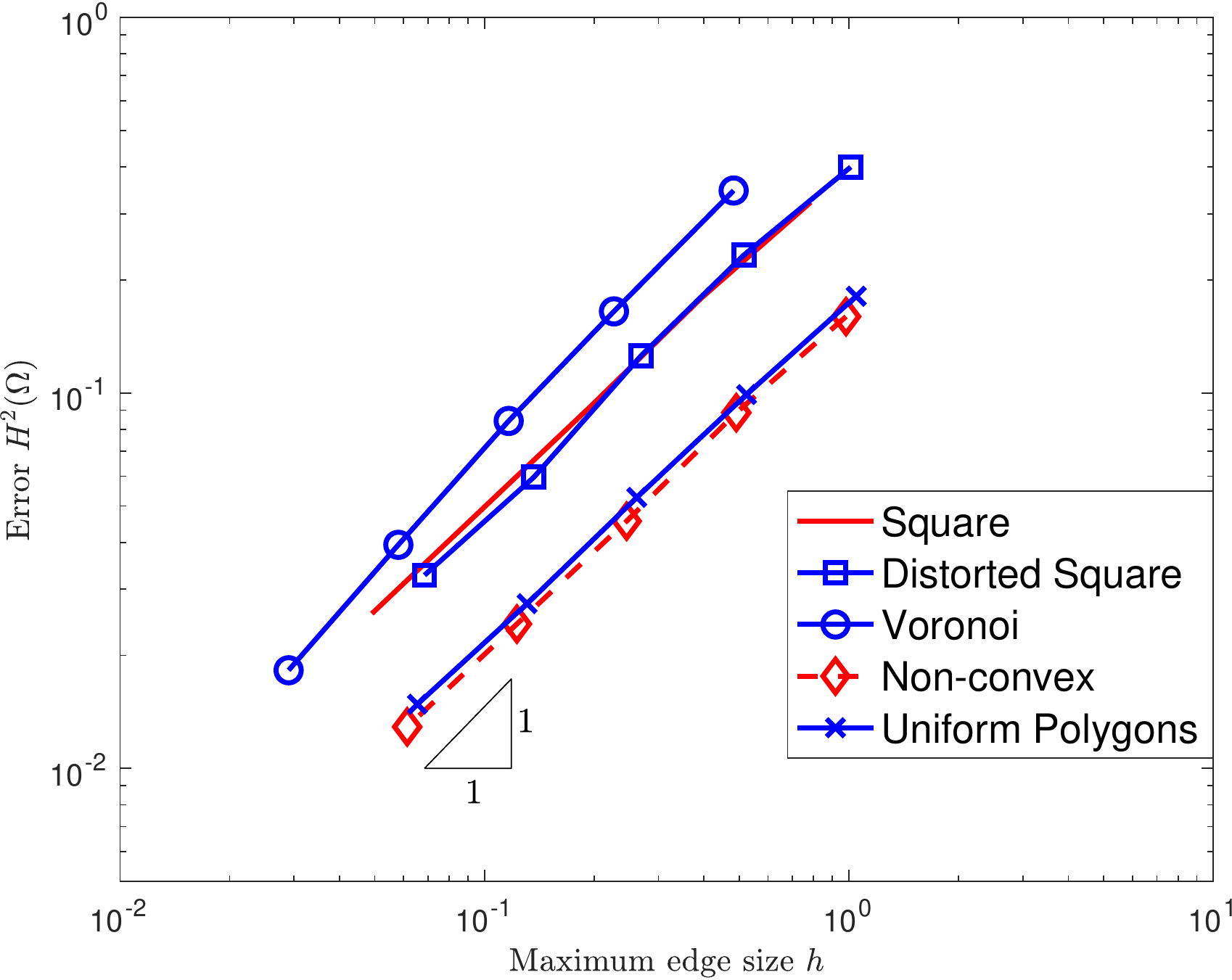}
\caption{Example 2: convergence of the error $H^2(\Omega)$ with mesh refinement for different types of discretizations.} 
\label{fig:examFig2}
\end{figure}

\begin{figure}[htpb]
\centering 
\includegraphics[scale=0.65]{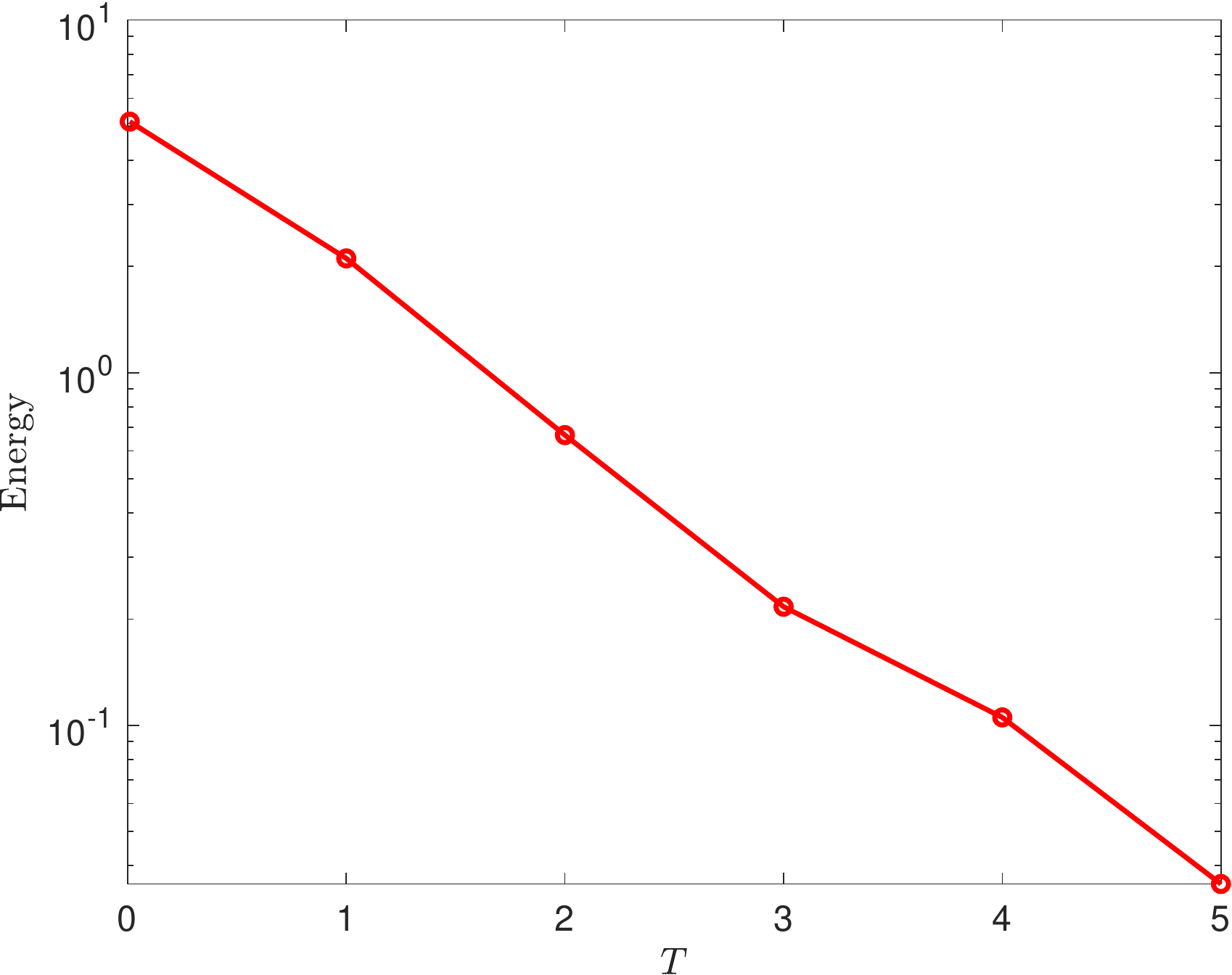}
\caption{Example 2: Decay of energy with respect to time.} 
\label{fig:examp_energy}
\end{figure}

\begin{figure}[htpb]
\centering 
\includegraphics[scale=0.65]{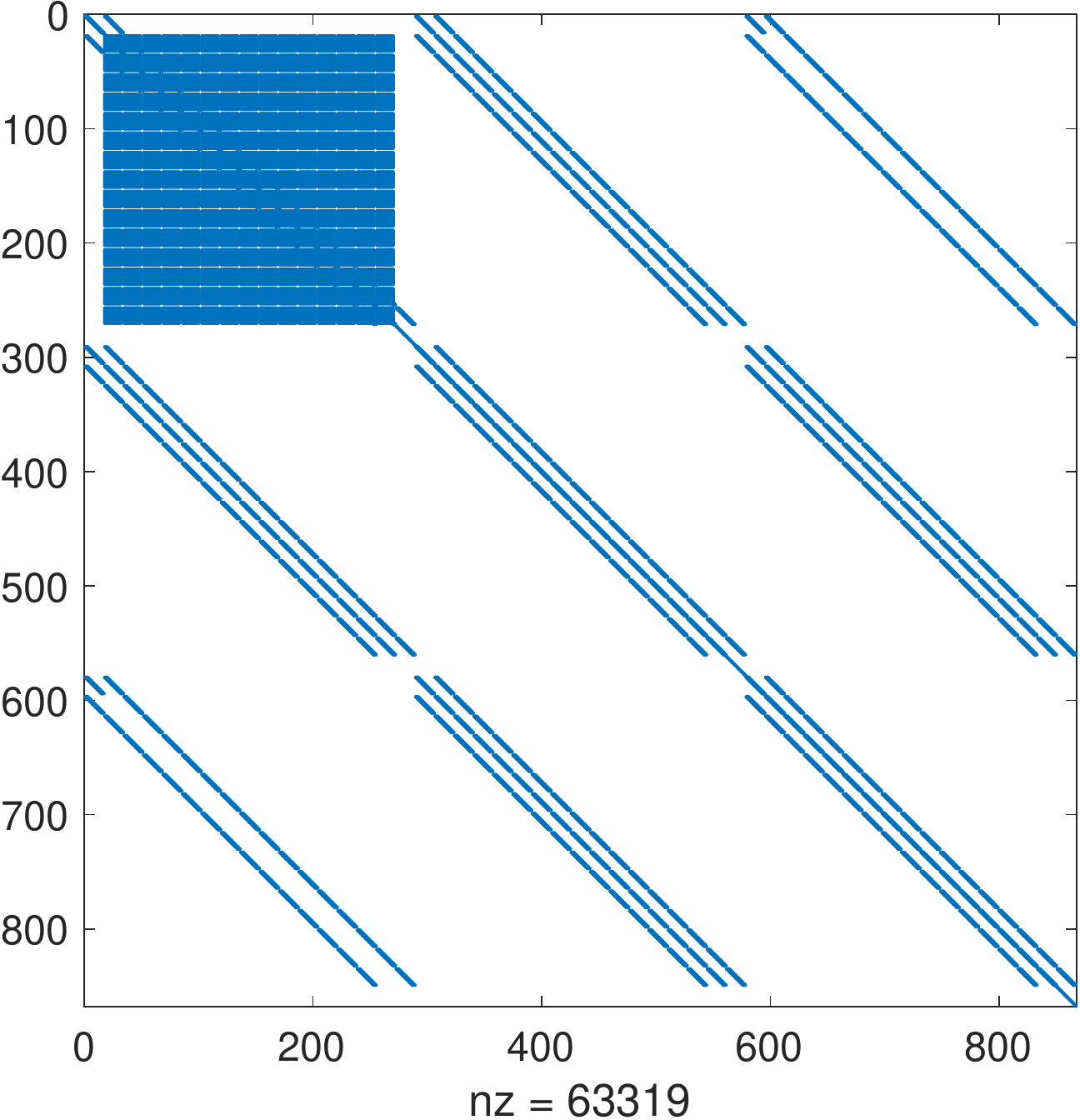}
\caption{Example 2: Jacobian computed on square mesh without introducing the new variable. It is clear that the number of non-zeros are more than the one proposed in this paper.} 
\label{fig:examp2_jac}
\end{figure}

\section{Conclusion}
In this article, we have proposed a numerical technique to solve the time dependent
nonlocal plate problem which models deformation of bridge. In order to discretize
time dependent part, we need to compute the $L^2$ projection operator and accordingly,
we have modified the VEM space which allows the full computation of the $L^2$
projection operator. Further, the model problem deals with nonlocal nonlinearity
which spoils the sparse structure of the Jacobian matrix and consequently computational cost.
We have addressed this difficulty by introducing independent variable and retrieve the
sparse structure of the Jacobian. Wellposedness of the fully discrete scheme and {\it a priori}
error estimates are derived in $H^2$ norm. Finally, we explore the workability of the numerical
technique by examining two benchmark examples including a problem with manufactured
solution and clamped boundary condition and another cases are prototype of more
realistic bridge modelling  without external vertical force functions.
Also, we have studied the potential and kinetic energy associated with deformation
of bride and plotted against time to demonstrate the uniform decay of energy as
time goes to infinity as claimed in \cite{CCCHSV-jFI2020}.

\section*{Acknowledgements}
The first author was partially supported by the National Agency for Research and Development, ANID-
Chile through FONDECYT Postdoctorado project 3200242.
The second author was partially supported by the National Agency for Research and Development,
ANID-Chile through FONDECYT project 1180913, by projects ACE210010 and
Centro de Modelamiento Matem\'atico FB210005
and by DIUBB through project 2120173 GI/C.

\end{document}